\documentclass[12pt, reqno]{amsart}
\textwidth=14.8 cm
\oddsidemargin=.7 cm
\evensidemargin=.7 cm
\topmargin=0cm
\textheight=22.65 cm
\usepackage{amsmath, amsfonts, amssymb}
\newcommand{\dal}{\square}
\newcommand{\R}{{\mathbb R}}

\newcommand{\C}{{\mathbb C}}
\newcommand{\ve}{\varepsilon}
\newcommand{\pa}{\partial}
\newcommand{\jb}[1]{\left\langle #1 \right\rangle}
\newcommand{\ip}[2]{\left\langle #1, #2 \right\rangle}
\newcommand{\tens}[1]{{\bf #1}}
\DeclareMathOperator{\supp}{supp}
\DeclareMathOperator{\real}{Re}
\DeclareMathOperator{\ima}{Im}
\DeclareMathOperator{\sgn}{sgn}
\newtheorem{theorem}{Theorem}
\newtheorem{lemma}[theorem]{Lemma}
\newtheorem{corollary}[theorem]{Corollary}
\newtheorem{proposition}[theorem]{Proposition}
\newtheorem{definition}[theorem]{Definition}
\newtheorem{condition}[theorem]{Condition}
\newtheorem{question}[theorem]{Question}
\newtheorem{remark}[theorem]{Remark}
\newtheorem{example}[theorem]{Example}
\numberwithin{equation}{section}
\numberwithin{theorem}{section}
\begin{document}
\title[Asymptotic Pointwise Behavior for Wave Equations]{%
Asymptotic Pointwise  Behavior  
for Systems  of Semilinear Wave Equations in Three Space Dimensions}
\author{Soichiro Katayama}
\date{}
\begin{abstract}
In connection with the weak null condition,
Alinhac introduced a sufficient condition
for global existence
of small amplitude solutions to systems of semilinear wave equations in three space dimensions.
We introduce a slightly weaker
sufficient condition
for the small data global existence, 
and we investigate the asymptotic pointwise behavior of global solutions 
for systems satisfying 
this condition. 
As an application, the asymptotic behavior
of global solutions 
under the Alinhac condition is also derived. 
\end{abstract}
\maketitle
\baselineskip=0.57cm
\section{Introduction}
This paper is devoted to the study of the Cauchy problem for systems of
semilinear wave equations in three space dimensions.
Throughout this paper, for the variables $t\in \R$ and $x=(x_1,x_2,x_3)\in \R^3$,
we use the notation
$$
 \pa_0=\pa_t=\frac{\pa}{\pa t}, \quad \pa_k=\frac{\pa}{\pa {x_k}},\ k=1,2,3.
$$
$\Delta_x$ and $\dal$ denote the Laplacian and the d'Alembertian, respectively; 
namely we define $\Delta_x=\sum_{k=1}^3 \pa_{k}^2$ and $\dal=\pa_t^2-\Delta_x$.
For a matrix (or vector) $\tens{B}$, its transpose is written as $\tens{B}^{\rm T}$.

We consider the Cauchy problem for systems of semilinear wave equations of the type
\begin{equation}
\label{OurSys}
\dal u_j=F_j({u}, \pa {u}) 
\quad \text{in $(0,\infty)\times \R^3$}, \quad j=1,2, \ldots, N
\end{equation}
with initial data
\begin{equation}
\label{Data}
{u}(0,x)=\ve {f}(x),\ (\pa_t {u})(0, x)=\ve {g}(x) \quad \text{for $x\in \R^3$},
\end{equation}
where ${u}=(u_1, \ldots, u_N)^{\rm T}$ and 
$\pa {u}=(\pa_0 {u}, \pa_1 {u}, \pa_2 {u}, \pa_3 {u})$.
Here each $u_j$ is supposed to be a real-valued function of $(t,x)\in [0, \infty)\times \R^3$.
We assume that ${f}=(f_j)_{1\le j\le N}^{\rm T}$, ${g}=(g_j)_{1\le j\le N}^{\rm T}\in
C^\infty_0(\R^3; \R^N)$ in \eqref{Data}, and that $\ve$ is a small and positive parameter.
We refer to $({f}, {g})$ as the {\it initial profile} in what follows.
Throughout this paper, we always assume for simplicity that 
each $F_j$ with $1\le j\le N$ is a homogeneous polynomial of degree $2$ in its arguments $(u,\pa u)$.

We say that the {\it small data global existence} (or SDGE
in short) holds
for \eqref{OurSys} if for any
${f}, {g}\in C^\infty_0(\R^3; \R^N)$, there exists a positive constant $\ve_0$ such that for any
$\ve\in (0, \ve_0]$ the Cauchy problem \eqref{OurSys}-\eqref{Data} possesses
a global classical solution ${u}\in C^\infty([0,\infty)\times \R^3; \R^N)$. 
It is known that SDGE 
does not hold for 
general quadratic nonlinearity 
(see John \cite{Joh79} and \cite{Joh81}).
Klainerman \cite{Kla86} introduced the {\it null condition}, and
proved SDGE for \eqref{OurSys}
under the null condition (see also Christodoulou \cite{Chr86}). To state the null condition, we introduce the
{\it reduced nonlinearity}
$$
F_j^{\rm red} (\omega, {X}, {Y})
:=F_j\left( {X}, \left(
                -{Y}, \omega_1 {Y}, \omega_2 {Y}, \omega_3 {Y}
               \right)
               \right),\quad 1\le j\le N
$$
for $\omega=(\omega_1, \omega_2, \omega_3)\in S^2$,
${X}=(X_1, \ldots, X_N)^{\rm T}\in \R^N$, and ${Y}=(Y_1, \ldots, Y_N)^{\rm T}
\in \R^N$, where $S^2$ denotes the unit sphere in $\R^3$.
In other words, $F_j^{\rm red}$ is obtained by
substituting $X_k$ and $\omega_a Y_k$ (with $1\le k\le N$ and $0\le a \le 3$)
for $u_k$ and $\pa_a u_k$ in $F_j({u},\pa {u})$, respectively, where $\omega_0=-1$.
We say that ${F}=(F_j)_{1\le j\le N}^{\rm T}$ satisfies the null condition if
\begin{equation}
\label{NC00}
F_j^{\rm red} (\omega, X, Y)=0, \quad 1\le j\le N, \ X,Y\in \R^N,\ \omega\in S^2.
\end{equation}
We introduce the null forms
\begin{align}
Q_0(\varphi, \psi):=& (\pa_t\varphi)(\pa_t\psi)-\sum_{k=1}^3
(\pa_k \varphi)(\pa_k \psi),
\label{QT01}\\
Q_{ab}(\varphi, \psi):=&(\pa_a\varphi)(\pa_b \psi)
{}-(\pa_b\varphi)(\pa_a\psi),\quad
0\le a<b\le 3.
\label{QT02}
\end{align}
When each $F_j$ 
is a homogeneous polynomial of degree $2$ in $(u, \pa u)$, as is assumed, we can show that 
$F=(F_j)_{1\le j\le N}^{\rm T}$ satisfies the null condition if and only if $F$ 
can be written as 
\begin{equation}
\label{Channie}
F_j(u,\pa u)=\sum_{k, l=1}^{N} \biggl(r_j^{0,kl}Q_0(u_k, u_l)
{}+\sum_{a,b=0}^3 r_j^{ab, kl} Q_{ab}(u_k, u_l)\biggr),\quad 1\le j\le N
\end{equation}
with some constants $r_j^{0, kl}$ and $r_j^{ab, kl}$
(see \cite{Kla86} for instance).
We can also show that if the null condition is satisfied,
then the global solution $u$ to
\eqref{OurSys}-\eqref{Data} for small $\ve$ is asymptotically free
in the energy sense, that is to say,
there exists a solution $\widetilde{u}$ to 
the free wave equation
$\dal \widetilde{u}=0$ with some data in $\dot{H}^1(\R^3)\times L^2(\R^3)$ such that
\begin{equation}
\label{AsympFreeE-0}
\lim_{t\to\infty}
 \left\|\left(u-\widetilde{u}\right)(t,\cdot)\right\|_E=0,
\end{equation}
where $\dot{H}^1(\R^3)$ is the homogeneous Sobolev space,
and the energy norm $\|\cdot\|_E$ is given by
\begin{equation}
\label{DefEnergyNorm}
 \|\varphi(t,\cdot)\|_E:=
\left(\frac{1}{2}
 \int_{\R^3} \left(|\pa_t \varphi(t,x)|^2+|\nabla_x \varphi(t,x)|^2 \right)dx
 \right)^{1/2}.
\end{equation}
Since we have the conservation of the energy $\left\|\widetilde{u}(t,\cdot)\right\|_E=\left\|\widetilde{u}(0,\cdot)\right\|_E$
for the free solution $\widetilde{u}$,
\eqref{AsympFreeE-0} implies
\begin{equation}
\label{AsympFreeE}
\lim_{t\to\infty}
 \left\|u(t,\cdot)\right\|_E=\left\|\widetilde{u}(0,\cdot)\right\|_E.
\end{equation}

In order to understand the null condition,
we introduce the {\it reduced system}.
For this purpose, first we recall the asymptotic behavior of the free solution:
Let $\varphi, \psi \in C^\infty_0(\R^3; \R)$, and
consider the Cauchy problem for the single wave equation
\begin{align}
\label{LinearWave}
&\dal u_0(t,x)=0 & & \text{for $(t,x)\in (0,\infty) \times \R^3$,}\\
\label{LinearData}
& u_0(0,x)=\varphi(x),\ (\pa_t u_0)(0,x)=\psi(x)
& & \text{for $x\in \R^3$}.
\end{align}
It is well known that the solution $u_0$ to \eqref{LinearWave}-\eqref{LinearData}
can be written as
\begin{equation}
\label{ExpressFreeSol02}
u_0(t,x)
=\frac{1}{4\pi t}\int_{|y-x|=t} \psi (y) dS_y
{}+\pa_t\left(\frac{1}{4\pi t} \int_{|y-x|=t} \varphi(y) dS_y\right), 
\end{equation}
where $dS_y$ is the area element of the sphere with radius $t$ centered at $x$.
Suppose that
$$
\supp \varphi \cup \supp \psi \subset B_R:=\{x\in \R^3;\ |x|\le R\}
$$
with a positive constant $R$.
Then  
\eqref{ExpressFreeSol02}
implies that $u_0(t,x)=0$ for any $(t,x)\in [0,\infty)\times \R^3$ satisfying
$|r-t|\ge R$ with $r=|x|$ (this property is called the {\it (strong) Huygens principle}). Hence it is reasonable
to consider the asymptotic behavior of $u_0$ for large $t$
(or equivalently large $r$) with $r-t$ being fixed;
then we can expect that the integrals over the sphere of radius $t$
centered at $x=r\omega$ (with $\omega=|x|^{-1}x$) in \eqref{ExpressFreeSol02}
tend to those over the tangential plane of the sphere at the point $(r-t)\omega$.
This is the motivation to introduce 
the {\it Friedlander radiation field}
\begin{equation}
\label{FriedRad}
{\mathcal F}_0\left[\varphi, \psi\right](\sigma, \omega)
=\frac{1}{4\pi}
\left({\mathcal R}\left[\psi\right](\sigma, \omega)
{}-\bigl(\pa_\sigma {\mathcal R}\left[\varphi\right]\bigr)(\sigma, \omega)
\right)
\end{equation}
for $(\sigma, \omega)\in \R\times S^2$, where
${\mathcal R}[h]$ is the Radon transform of a function $h$
given by
\begin{equation}
\label{RadTrans}
{\mathcal R}[h](\sigma, \omega)=
\int_{y\cdot \omega=\sigma} h(y) dS_y'.
\end{equation}
Here $dS_y'$ is the area element of the plane
$\{y\in \R^3;\ y\cdot\omega=\sigma\}$.
It is known that for the solution $u_0$ to \eqref{LinearWave}-\eqref{LinearData} we have 
\begin{align}
& \left|ru_0(t,r\omega)-{\mathcal F}_0\left[\varphi, \psi\right](r-t, \omega)
   \right| \nonumber\\
& \qquad {}+\sum_{a=0}^3 \left|
                                    r(\pa_a u_0)(t, r\omega)-\omega_a 
                                         \left(\pa_\sigma 
                                              {\mathcal F}_0
                                                \left[ \varphi, \psi\right]
                                         \right)(r-t,\omega)
                                    \right|
\le C(1+t+r)^{-1}
\label{FriedAsymp}
\end{align}
for any $\omega=(\omega_1, \omega_2, \omega_3)\in S^2$ and any $(t,r)$ 
satisfying $r\ge t/2\ge 1$, 
where we have set $\omega_0=-1$ as before,
and $C$ is a positive constant determined by $\varphi$ and $\psi$
(see H\"ormander \cite[Theorem 6.2.1]{Hoe97} for instance; see also Katayama-Kubo \cite{Kat-Kub09a}).

Let $u$ be the solution to \eqref{OurSys}-\eqref{Data}.
Motivated by the Friedlander radiation field, and taking account of the lifespan of the solution
for the case of general quadratic nonlinearity, we seek an approximation of the solution $u$ of the form
$$
\ve r^{-1} U\left(\ve \log t, r-t, \omega\right),
$$
which approximates $u(t, r\omega)$ as $\ve$ tends to $0$, with $\tau=\ve \log t$, 
$\sigma=r-t$, and $\omega(\in S^2)$ being fixed. 
Formal calculations show that $U=U(\tau, \sigma, \omega)
=\left(U_j(\tau,\sigma, \omega)\right)_{1\le j\le N}^{\rm T}$
should be determined by
\begin{equation}
\label{RedSys}
-2 \pa_\tau\pa_\sigma U_j(\tau, \sigma, \omega)= 
F_j^{\rm red} \left(\omega, U(\tau,\sigma,\omega), \pa_\sigma U(\tau, \sigma, \omega)\right),\quad 1\le j\le N,
\end{equation}
which is called the {\it reduced system} 
(the quasi-linear version of this system was successfully used in the
detailed study of the lifespan of the solution; see John \cite{Joh87} and 
H\"ormander \cite{Hoe87}).
Now we understand that the null condition \eqref{NC00} says that the right-hand side of \eqref{RedSys} vanishes identically, and we can solve \eqref{RedSys} globally. From this observation,
Lindblad-Rodnianski \cite{Lin-Rod03} introduced the notion of the {\it weak null condition},
which means that the reduced system
always has global solutions with at most exponential growth in $\tau$
(see also \cite{Lin-Rod05});
it is conjectured that the weak null condition implies SDGE, 
but this conjecture is still open.

In connection with the weak null condition,
Alinhac \cite{Ali06} considered the case of $F=F(\pa u)$, 
and introduced a sufficient condition for SDGE,
which is stronger than the weak null condition, but still weaker than the null condition. 
For simplicity of exposition, when $F_j=F_j(\pa u)$,
we omit $X$ in $F_j^{\rm red}(\omega, X, Y)$ and write $F_j^{\rm red}=F_j^{\rm red} (\omega, Y)$ for $1\le j\le N$ in what follows.
If we write $\omega_0=-1$, $\omega=(\omega_1, \omega_2, \omega_3)\in S^2$,
and $Y=(Y_j)_{1\le j\le N}^{\rm T}\in \R^N$,
then his condition
 can be read as follows:
\begin{condition}[Alinhac~\cite{Ali06}]\label{AlinhacCond}
\normalfont
There exist a real valued function $M=M(\omega, Y)$, 
an $\R^N$-valued function $\beta=\beta(\omega)=\left(\beta_j(\omega)\right)_{1\le j\le N}^{\rm T}$,
a positive integer $N_0$,
and linear forms $g_{jl}=g_{jl}(\omega, Y)$ and $h_l=h_l(\omega,Y)$ in $Y$,
which can be written as
\begin{align}
\label{Yone}
g_{jl}(\omega, Y)= & \sum_{k=1}^N g_{jl, k}(\omega) Y_k, 
  & & 1\le j\le N, \ 1\le l\le N_0,\\
\label{Mil}
h_l(\omega, Y)= & \sum_{a=0}^3\sum_{k=1}^N h_{l, ka} \omega_a Y_k, 
  & & 1\le l\le N_0
\end{align}
with smooth coefficients $g_{jl, k}=g_{jl, k}(\omega)$ and real constants
$h_{l, ka}$,
such that
\begin{align}
\label{AA-1}
& F_j^{\rm red} (\omega, Y)=M(\omega, Y) \beta_j(\omega),
& & 1\le j\le N,\ (\omega, Y)\in S^2\times \R^N,\\
\label{AAB-1}
& F_j^{\rm red} (\omega, Y)= \sum_{l=1}^{N_0}
g_{jl}(\omega, Y) h_l(\omega, Y), 
& & 1\le j\le N,\ (\omega, Y)\in S^2\times \R^N,
\\
\label{AAB-2}
& h_l \left(\omega, \beta(\omega)\right) =0,
& & 1\le l\le N_0,\ \omega\in S^2.
\end{align}
\end{condition}
\begin{remark}
\normalfont
\eqref{AA-1}, \eqref{AAB-1}, and \eqref{AAB-2} yield
\begin{equation}
\label{AA-2}
M\left(\omega, \beta(\omega)\right)=0, \quad \omega\in S^2.
\end{equation}
The weak null condition follows from \eqref{AA-1} and \eqref{AA-2},
but it is not known if these two conditions \eqref{AA-1} and \eqref{AA-2}
are sufficient for SDGE (see Alinhac~\cite{Ali06}).
\end{remark}
It is easy to see that the null condition implies Condition~\ref{AlinhacCond}.
A simple example satisfying 
Condition~\ref{AlinhacCond} but not the null condition is
\begin{equation}
\label{simplestEx}
\begin{cases}
\dal u_1=(\pa_1u_1)(\pa_1u_2-\pa_2u_1),\\
\dal u_2=(\pa_2u_1)(\pa_1u_2-\pa_2u_1)
\end{cases}
\end{equation}
in $(0,\infty)\times \R^3$. We have 
$F_1^{\rm red}=\omega_1 Y_1 (\omega_1Y_2-\omega_2 Y_1)$
and $F_2^{\rm red}=\omega_2 Y_1 (\omega_1Y_2-\omega_2 Y_1)$, and we find that
Condition~\ref{AlinhacCond} is satisfied with
\begin{align*}
& M(\omega, Y)=Y_1 (\omega_1Y_2-\omega_2 Y_1), \ \beta(\omega)=(\omega_1, \omega_2)^{\rm T}, \\
& N_0=1,\ g_{11}(\omega, Y)=\omega_1Y_1, \ g_{21}(\omega, Y)=\omega_2 Y_1,
\ h_1(\omega, Y)=\omega_1Y_2-\omega_2 Y_1.
\end{align*}
If we put $w=\pa_1 u_2-\pa_2 u_1$, which corresponds to $h_1$ above, then we obtain
\begin{equation}
\label{simplestExR}
\begin{cases}
\dal u_1=w(\pa_1u_1),\\
\dal u_2=w(\pa_2u_1),\\
\dal w=\pa_1\dal u_2-\pa_2 \dal u_1=Q_{12}(w, u_1).
\end{cases}
\end{equation}
As we will see later, this hidden structure plays an important role in deriving global solutions. 
Concerning the asymptotic behavior,
Katayama-Kubo \cite{Kat-Kub08:02} showed that global solutions 
under Condition~\ref{AlinhacCond} 
may not be asymptotically free in the energy sense:
For example, it is shown that for some initial profile,
there exists a positive constant $C$ such that 
\begin{equation}
\label{EnergyGrowth}
\|u(t)\|_E\ge C\ve (1+t)^{C \ve} \quad \text{ for small $\ve$},
\end{equation}
where $u=(u_1, u_2)^{\rm T}$ is the global solution to
\eqref{simplestEx} (or equivalently $(u_1, u_2, w)^{\rm T}$ is the global solution to \eqref{simplestExR}).
The estimate \eqref{EnergyGrowth} makes a sharp contrast to \eqref{AsympFreeE},
and the solution $u$ cannot be asymptotically free in the energy sense for such data.

Now the following questions arise:
\begin{question}\label{FirstQ}
\normalfont
We know that sometimes the
global solution to \eqref{OurSys} is asymptotically free, 
and sometimes not so with increasing energy. 
Do we have other kind of the asymptotic behavior?
Especially, is there some nonlinearity $F$ such that the global solution 
to \eqref{OurSys} for some small data behaves differently from free solutions,
although its energy stays bounded from above and below by positive constants?
\end{question}
\begin{question}\label{SecondQ}
\normalfont
In addition to 
Condition~\ref{AlinhacCond} from Alinhac~\cite{Ali06},
what condition do we need
in order to ensure that the global solutions with small data behave like free solutions?
\end{question}
In this paper, 
motivated by the Friedlander radiation field,
we will investigate the asymptotic pointwise behavior of global solutions 
for large $t$ with $r-t$ and $\omega$ being fixed, under 
a certain condition which is related to 
Condition~\ref{AlinhacCond}, and we will answer the two questions above from this point of view.

Throughout this paper, as usual, various positive constants will
be indicated just by the same letter $C$, and its actual value 
may change line by line.

\section{The Main Results}
\subsection{Notation}\label{notation}
First we introduce some notation.
We define the vector fields
\begin{align}
S := & t\pa_t+x\cdot \nabla_x, \\
L = & (L_1,L_2,L_3):=t\nabla_x+x\pa_t=(t\pa_j+x_j\pa_t)_{1\le j\le 3}, \\
\Omega = & (\Omega_1, \Omega_2, \Omega_3):=x\times \nabla_x
=(x_2\pa_3-x_3\pa_2, x_3\pa_1-x_1\pa_3, x_1\pa_2-x_2\pa_1), 
\label{Rotation}\\
\pa := & 
(\pa_0, \pa_1, \pa_2, \pa_3),
\end{align}
where $\nabla_x=(\pa_1, \pa_2, \pa_3)$. Here the symbols ``\,$\cdot$\,'' and
``$\times$'' denote the inner and exterior products in $\R^3$,
respectively.
We put
\begin{equation}
\Gamma=(\Gamma_0, \Gamma_1, \ldots, \Gamma_{10})
=(S, L, \Omega, \pa)=\left(S, (L_j)_{1\le j\le 3}, (\Omega_j)_{1\le j\le 3}, (\pa_a)_{0\le a\le 3}\right),
\end{equation}
and we write $\Gamma^\alpha=\Gamma_0^{\alpha_0}\Gamma_1^{\alpha_1}\cdots \Gamma_{10}^{\alpha_{10}}$ with a multi-index $\alpha=(\alpha_0, \alpha_1, \ldots, \alpha_{10})$.
For a nonnegative integer $s$ and a smooth function $\varphi=\varphi(t,x)$, we define
\begin{equation}
\label{InvariantNorm}
|\varphi(t,x)|_s=\sum_{|\alpha|\le s} |\Gamma^\alpha \varphi(t,x)|,
\text { and }
\|\varphi(t,\cdot)\|_s=\left(\sum_{|\alpha|\le s} \|\Gamma^\alpha \varphi(t,\cdot)\|_{L^2(\R^3)}^2\right)^{1/2}.
\end{equation}

For $(\varphi, \psi)\in \dot{H}^1(\R^3)\times L^2(\R^3)$,
we define
\begin{equation}
{\mathcal U}_0[\varphi, \psi](t,x):=u_0(t,x),
\end{equation}
where $u_0$ is the solution
to \eqref{LinearWave}-\eqref{LinearData} with 
$$
(u_0, \pa_t u_0)\in C\bigl([0,\infty); \dot{H}^1(\R^3)\bigr)
\times C\bigl([0,\infty); L^2(\R^3)\bigr).
$$ 
Here $\dot{H}^1(\R^3)$ denotes the homogeneous Sobolev space which is the completion of $C^\infty_0(\R^3)$ with respect to the norm $\|\varphi\|_{\dot{H}^1(\R^3)}=\|\nabla_x \varphi\|_{L^2(\R^3)}$.

For $z\in \R^d$ with a positive integer $d$, the notation $\jb{z}=\sqrt{1+|z|^2}$ will be used throughout this paper. 
As in the introduction, we always put 
$$
\omega_0=-1
$$
in what follows.

For the later convenience we allow $f$ and $g$ in \eqref{Data} to
depend on $\ve$. More precisely,
let ${\mathcal X}_N$ be the set of all mappings 
$$
(f,g): [0,1]\ni \ve \mapsto \bigl(f(\cdot; \ve), g(\cdot; \ve)\bigr)\in C^\infty_0(\R^3; \R^N)
 \times C^\infty_0(\R^3;\R^N)
$$
having the following two properties:
\begin{enumerate} 
\item 
There is some $R>0$, depending on $(f,g)$, such that
$\bigl(f(x;\ve), g(x;\ve)\bigr)=(0,0)$ for $|x|\ge R$ and $\ve\in [0,1]$.
\item 
For any nonnegative integer $s$, we have
$$
\sup_{\ve\in (0,1], \, x\in \R^3}\ve^{-1} \sum_{|\alpha|\le s} 
\left(\left|\pa_x^\alpha \bigl(f(x;\ve)-f(x;0)\bigr)\right|
{}+\left|\pa_x^\alpha \bigl(g(x;\ve)-g(x;0)\bigr)\right|\right)<\infty.
$$
\end{enumerate}
Here $\pa_x=(\pa_1,\pa_2,\pa_3)$, and we have used the standard notation of multi-indices.
We replace the initial condition \eqref{Data} with
\begin{equation}
\label{Data0}
u(0,x)=\ve f(x; \ve),\ (\pa_t u)(0,x)=\ve g(x; \ve) \quad \text{for $x\in \R^3$},
\end{equation}
where $(f,g)\in {\mathcal X}_N$, and $\ve$ is positive and small.
\subsection{Basic assumption and examples}
To state the main condition for our results, we introduce the following equivalence relation, which is motivated by the enhanced decay 
estimate for the null forms (see Lemma~\ref{NullNull} below):
\begin{definition}
\label{EquiNL}
Let $F$ in \eqref{OurSys} be given, and let $D\subset {\mathcal X}_N$.

For $\Phi(\omega, u, \pa u)$ and $\Psi(\omega, u, \pa u)$,
which are homogeneous polynomials of degree $2$ in $(u, \pa u)$ 
with smooth coefficients depending on $\omega\in S^2$,
we write 
$$
 \Phi(\omega, u, \pa u) \stackrel{D}{\sim} \Psi(\omega, u, \pa u),
$$
if for any nonnegative integer $s$ there exists a positive constant $C_s$
such that the following property holds:
If $u=u(t,x)$ satisfies \eqref{OurSys}-\eqref{Data0} for $0\le t<T$ with some $(f,g)\in D$, $\ve\in (0,1]$, and $T>0$, then
it holds that
\begin{align}
& \left|\Phi\left(|x|^{-1}x, u(t,x), \pa u(t, x)\right)-\Psi\left(|x|^{-1}x, u(t, x), \pa u(t, x)\right)\right|_s
\nonumber\\
& \quad \le C_s \jb{t+|x|}^{-1}\left(|u(t, x)|_{[s/2]+1}
|\pa u(t,x)|_s+|\pa u(t, x)|_{[s/2]} |u(t,x)|_{s+1}\right)
\label{Tama}
\end{align}
for any $(t,x)\in [0, T)\times \R^3$ satisfying $|x| \ge t/2\ge 1$.
\end{definition}
Now, motivated by \eqref{simplestExR}, 
we introduce the following condition
which we put on
the nonlinearity $F(u,\pa u)=\left(F_j(u, \pa u)\right)_{1\le j\le N}^{\rm T}$
in our theorems:
\begin{condition}\label{OurCond}
\normalfont
Setting 
\begin{align}
\left(v^{\rm T},w^{\rm T}\right)=&(v_1, \ldots, v_{N'}, w_1, \ldots, w_{N''})
\nonumber\\
            :=& (u_1, \ldots, u_{N'}, u_{N'+1}, \ldots u_N)
=u^{\rm T}
\label{DU02}
\end{align}
with $N'+N''=N$,
we can write $F$ as
\begin{equation}
\label{Form01}
F_j(u, \pa u)=
\begin{cases}
F_j^1(\pa u)+F_j^2(u, \pa u),
& 1\le j \le N', \\
F_j^1(\pa u), & N'+1\le j\le N,
\end{cases}
\end{equation}
where each $F_j^1$ is a homogeneous polynomial of degree $2$ in $\pa u$, and 
$$ 
F_j^2(u, \pa u)=\sum_{a=0}^3 \sum_{k=1}^{N'}\sum_{l=1}^{N''} p_j^{ka, l}w_l(\pa_av_k)
$$
with some constants $p_j^{ka, l}$;
furthermore, there exist some subset $D$ of ${\mathcal X}_N$,
and some homogeneous polynomials $G_j(\omega, u, \pa u)$ ($1\le j\le N'$) of degree $2$
in $(u, \pa u)$, which have the form
\begin{align}
\label{AssA}
 G_j(\omega, u, \pa u)=& \sum_{a=0}^3
 \sum_{k=1}^{N'}\sum_{l=1}^{N''}
  \left(c_{jk}^{a, l}(\omega) w_{l}+\sum_{b=0}^3d_{jk}^{\, a, lb}(\omega)
 (\pa_b w_{l})\right) 
(\pa_a v_k)
\end{align}
with smooth coefficients $c_{jk}^{a, l}$ and $d_{jk}^{\, a, l b}$ on $S^2$,
such that
\begin{equation}
\label{Form02}
F_j(u, \pa u)\stackrel{D}{\sim}
\begin{cases}
G_j(\omega, u, \pa u), & 1\le j\le N',\\
0, & N'+1\le j\le N.
\end{cases}
\end{equation}
\end{condition}

It may seem difficult to check Condition~\ref{OurCond} because
it contains the relation given by the inequality \eqref{Tama}.
Here we give two kinds of algebraic conditions to ensure 
Condition~\ref{OurCond}:
\begin{proposition}\label{Example}
{$(1)$} 
Let 
$u=(u_1, \ldots, u_N)^{\rm T}$, and
let $v$ and $w$ be given by \eqref{DU02}.
Suppose that $F=(F_j)_{1\le j\le N}$ has the form
\begin{equation}
\label{Ulrich}
F_j(u, \pa u)
=
\begin{cases}
F_j^{0}(\pa u)+\widetilde{G}_j(u, \pa u),
& 1\le j\le N', \\
\displaystyle F_j^{0}(\pa u), & N'+1\le j\le N,
\end{cases}
\end{equation}
where $\widetilde{G}_j$ is given by
$$
\widetilde{G}_j(u,\pa u)=\sum_{a=0}^3\sum_{k=1}^{N'}\sum_{l=1}^{N''} 
 \left(\widetilde{c}_{jk}^{\, a, l} w_{l}+\sum_{b=0}^3\widetilde{d}_{jk}^{\, a, lb} (\pa_b w_{l}) \right)
 (\pa_a v_k), \quad 1\le j\le N'
$$
with some constants $\widetilde{c}_{jk}^{\, a, l}$ 
and $\widetilde{d}_{jk}^{\, a, lb}$,
while $F^{0}=(F_j^{0})_{1\le j\le N}^{\rm T}$ satisfies the null condition.
Then Condition~$\ref{OurCond}$ is satisfied 
with $D={\mathcal X}_N$ and
$G_j(\omega, u, \pa u)=\widetilde{G}_j(u, \pa u)$ for $1\le j\le N'$.
\smallskip\\
{$(2)$} Let 
 $F_j=F_j(\pa u)$ be a homogeneous 
polynomial of degree $2$ in its arguments for $1\le j\le N$.
Then the system \eqref{OurSys} satisfying Condition~$\ref{AlinhacCond}$
with the initial condition
\eqref{Data0} for $(f,g)\in {\mathcal X}_N$ can be reduced to another system of some size $N^*(\ge N)$,
for which Condition~$\ref{OurCond}$ is satisfied 
with appropriately chosen $D (\subset {\mathcal X}_{N^*})$.
\end{proposition}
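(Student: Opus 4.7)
For Part~(1) I would argue directly from the null-form decay. Since $G_j=\widetilde{G}_j$ for $1\le j\le N'$ and $G_j=0$ for $N'+1\le j\le N$, the difference $F_j-G_j$ equals $F_j^{0}(\pa u)$ in every case. Because $F^{0}$ satisfies the null condition, the characterization \eqref{Channie} expresses each $F_j^{0}$ as a linear combination of null forms $Q_0(u_k,u_l)$ and $Q_{ab}(u_k,u_l)$. In the exterior region $|x|\ge t/2\ge 1$ these null forms, and their $\Gamma$-derivatives, enjoy the enhanced pointwise gain $\jb{t+|x|}^{-1}$ over a generic quadratic form in $\pa u$; this is the standard null-form estimate obtained from the decomposition $\pa_a u=-\omega_a\pa_t u+(\pa_a+\omega_a\pa_t)u$. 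That gain is precisely the bound \eqref{Tama} demanded by Definition~\ref{EquiNL}, and since the estimate is uniform in the initial data one may take $D={\mathcal X}_N$.

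Part~(2) is more constructive, modeled on the passage from \eqref{simplestEx} to \eqref{simplestExR}. For each $l=1,\ldots,N_0$ I would introduce the auxiliary unknown
\begin{equation*}
W_l:=\sum_{a=0}^{3}\sum_{k=1}^{N}h_{l,ka}\,\pa_a u_k,
\end{equation*}
the constant-coefficient lift of the Alinhac linear form $h_l$. The extended unknown is $(v,w)=(u,W)$ with $N^*=N+N_0$, and the coupled system consists of the original $\square u_k=F_k(\pa u)$ together with
\begin{equation*}
\square W_l=\sum_{a,k}h_{l,ka}\,\pa_a F_k(\pa u),
\end{equation*}
obtained by applying $\sum_{a,k}h_{l,ka}\pa_a$ to the original wave equation and using $[\square,\pa_a]=0$.

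To verify the first equivalence in Condition~\ref{OurCond} (namely $F_j\stackrel{D}{\sim}G_j$), I would start from Alinhac's decomposition $F_j^{\rm red}(\omega,Y)=\sum_l g_{jl}(\omega,Y)h_l(\omega,Y)$. Substituting $Y=-\pa_t u$ and comparing with $W_l$ via the good-derivative identity $\pa_a u_k=-\omega_a\pa_t u_k+(\pa_a+\omega_a\pa_t)u_k$ (whose second term carries the null-form gain of Part~(1)), the quadratic form $F_j(\pa u)-\sum_l g_{jl}(\omega,-\pa_t u)\,W_l$ has vanishing reduced nonlinearity, so by \eqref{Channie} it is a sum of null forms. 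The Part~(1) argument then yields
\begin{equation*}
F_j(\pa u)\stackrel{D}{\sim}-\sum_{k=1}^{N}\sum_{l=1}^{N_0} g_{jl,k}(\omega)\,(\pa_t u_k)\,W_l,
\end{equation*}
which is of the form \eqref{AssA} with $c_{jk}^{0,l}(\omega)=-g_{jl,k}(\omega)$ and the other coefficients zero.

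The main obstacle is proving $\square W_l\stackrel{D}{\sim}0$. The prototype is $\pa_1 F_2-\pa_2 F_1=Q_{12}(w,u_1)$ in \eqref{simplestExR}, where the symmetric second derivative $\pa_1\pa_2 u_1\cdot w$ cancels because $(F_1,F_2)$ is aligned with $\beta=(\omega_1,\omega_2)$. In general, the plan is to expand $\sum_{a,k}h_{l,ka}\pa_a F_k$ and cancel its second-derivative contributions by combining (i) the commutativity $\pa_a\pa_b=\pa_b\pa_a$ with the structural identity $F_k^{\rm red}=M\beta_k$ from \eqref{AA-1}, and (ii) the crucial algebraic relation $\sum_{a,k}h_{l,ka}\omega_a\beta_k(\omega)=h_l(\omega,\beta(\omega))=0$ from \eqref{AAB-2}, which makes the leading ``radial'' piece of $\sum_{a,k}h_{l,ka}\pa_a F_k$ vanish identically. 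What remains should close as a null-form combination in $(\pa u,\pa W)$, hence $\stackrel{D}{\sim}0$ again by Part~(1); should further auxiliary variables be needed to present the right-hand side as an honest polynomial in $(u,\pa u,W,\pa W)$, this is compatible with the conclusion $N^*\ge N$. Finally, $D$ is taken to be the subset of ${\mathcal X}_{N^*}$ whose initial data satisfy $W_l|_{t=0}=\sum h_{l,ka}\pa_a u_k|_{t=0}$ together with the induced constraint on $\pa_t W_l|_{t=0}$; uniqueness of the Cauchy problem then propagates this constraint, so the $\stackrel{D}{\sim}$-relations remain valid throughout the evolution.
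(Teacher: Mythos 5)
Your Part (1) is exactly the paper's argument: $F_j-G_j=F_j^{0}$, which by \eqref{Channie} is a combination of null forms, so Lemma~\ref{NullNull} gives \eqref{Tama} with $D={\mathcal X}_N$. No issues there.

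In Part (2), however, there is a genuine gap in the choice of the extended system. With $u^*=(u,W)$ and $N^*=N+N_0$, the equation $\dal W_l=\sum_{a,k}h_{l,ka}\,\pa_a F_k(\pa u)$ has a right-hand side containing second derivatives $\pa_a\pa_b u_m$, which cannot be written as a homogeneous quadratic polynomial in $(u^*,\pa u^*)$; so this system is simply not of the form \eqref{OurSys}, and Condition~\ref{OurCond} (in particular \eqref{Form01}, which demands $F_j=F_j^1(\pa u^*)$ for the $w$-equations) cannot even be formulated for it. The same obstruction reappears in the equivalence estimates: the good-derivative remainders you generate are of the type $Z_a\pa_t u_k$, and the right-hand side of \eqref{Tama} only controls such terms through $|u^*|_{s+1}$ if the first derivatives of $u$ are themselves components of $u^*$ (via Lemma~\ref{CommZG}, $|Z_a v_{k+N}|_s\le C\jb{t+r}^{-1}|u^*|_{s+1}$). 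This is why the paper takes $v=(u^{\rm T},\pa_0u^{\rm T},\ldots,\pa_3u^{\rm T})^{\rm T}$, i.e.\ $N^*=5N+N_0$: then $\dal(\pa_a u_j)=F^*_{j+(a+1)N}(\pa u^*)$ and $\dal w_l=\sum h_{l,ka}F^*_{k+(a+1)N}(\pa u^*)$ are honest quadratics in $\pa u^*$. Your parenthetical remark that ``further auxiliary variables'' could be added if needed does not close this: they are not optional, and once they are added one must also verify the relations $F^*_{j+(a+1)N}\stackrel{D}{\sim}G_{j+(a+1)N}$ for the new derivative equations (the paper constructs $G_{j+(a+1)N}$ from $\pa_a G_j$ plus terms where $\pa_a$ hits the $\omega$-dependent coefficients, \eqref{Nisesshi02} and \eqref{Est04}), which your proposal never addresses.

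On the positive side, your identification of the $v$-equation profile $G_j=-\sum_{l,k}g_{jl,k}(\omega)(\pa_t u_k)w_l$ agrees with the paper's \eqref{Nisesshi01}, and your cancellation mechanism for $\dal W_l\stackrel{D}{\sim}0$ (factor the leading second-derivative terms through \eqref{AA-1} into $\beta_k(\omega)$ times a common quantity, then kill the sum with $h_l(\omega,\beta(\omega))=0$ from \eqref{AAB-2}) is the paper's computation \eqref{FourTwentyNine}. One smaller inaccuracy: you invoke \eqref{Channie} for the difference $F_j(\pa u)-\sum_l g_{jl}(\omega,-\pa_t u)W_l$, but \eqref{Channie} characterizes constant-coefficient null forms, whereas this expression has $\omega$-dependent coefficients; the correct route (as in the paper) is to write $\pa_a=Z_a-\omega_a\pa_t$, use \eqref{FourEight}-type identities, and estimate the $Z$-remainders directly via Lemma~\ref{CommZG}.
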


We will prove Proposition \ref{Example} in Section \ref{Ex3}.
Note that the null condition implies Condition~\ref{OurCond}
with $D={\mathcal X}_N$, because \eqref{Ulrich} holds with $\widetilde{G}_j\equiv 0$.
\begin{remark}
\normalfont
Our condition can be slightly weakened: We can add
such terms as $w_l(\pa_a w_{m})$ to $F_j^2$ for $1\le j\le N'$ in \eqref{Form01},
and such ones as $w_l(\pa_a w_{m})$ and $(\pa_b w_l)(\pa_a w_{m})$,
with smooth coefficients on $S^2$, to $G_j$ for $1\le j\le N'$ in \eqref{AssA}
(here $l$ and $m$ run from $1$ to $N''$;
$a$ and $b$ from $0$ to $3$).
These terms have been omitted just for simplicity of exposition. 
Indeed, in order to treat them, we only need to duplicate the equations for
$w$, add them 
to the original system for $u$, and regard {original} $u$ and {duplicated} $w$ 
as {new} $v$ and $w$, respectively (namely we put $v_{N'+m}=w_m$ for $1\le m\le N''$,
so that we have $w_l(\pa_a w_m)=w_l(\pa_a v_{N'+m})$ and so on);
we can see that Condition~\ref{OurCond}
is satisfied for this extended system with $N+N''$ components. 
The same is true for $(1)$ of Proposition~\ref{Example}.
\end{remark}
\subsection{Global existence}
We define $\R_+:=[0,\infty)$.
Concerning the existence of global solutions, we have the following:
\begin{theorem}\label{GE}
We fix a positive integer $m\ge 5$. We also fix two positive constants
$\lambda$ and $\rho$ satisfying
$0<\lambda<1/20$ and $1/2<\rho\le 1-6\lambda$.
Suppose that Condition~$\ref{OurCond}$ 
is fulfilled.
Let $D(\subset {\mathcal X}_N)$ be from Condition~$\ref{OurCond}$.
If $(f,g)\in D$, then
there exists a positive constant $\ve_0(\le 1)$
such that for every $\ve \in (0, \ve_0]$ the Cauchy problem \eqref{OurSys}-\eqref{Data0}
admits a unique global solution 
$u\in C^\infty(\R_+\times\R^3; \R^N)$.
Moreover there exists a positive constant $C$ such that
\begin{align}
& 
\sup_{(t,x)\in \R_+\times \R^3} \jb{t+|x|}
\Bigl\{\jb{t+|x|}^{-\lambda}|v(t,x)|_{m+1}
\nonumber\\
& \qquad\qquad\qquad\qquad\qquad\qquad\qquad
{}+\jb{t-|x|}^\rho|w(t,x)|_{m+2}\Bigr\}\le C\ve,
\label{Fukkie}
\\
\label{Chato}
& 
\sup_{t\in \R_+} \left\{(1+t)^{-\lambda} \|\pa v(t, \cdot)\|_{2m}
{}+\|\pa w(t,\cdot)\|_{2m}\right\}\le C\ve 
\end{align}
for $\ve\in (0, \ve_0]$, where $v=(v_1, \ldots, v_{N'})^{\rm T}$ and $w=(w_1, \ldots, w_{N''})^{\rm T}$
are given by \eqref{DU02}.

If we put 
$$
\widetilde{v}_j=v_j-\ve\, {\mathcal U}_0[f_j, g_j]
\text{ and }
\widetilde{w}_k=w_k-\ve\, {\mathcal U}_0[f_{N'+k}, g_{N'+k}]$$ 
for $1\le j\le N'$ and $1\le k\le N''$, then
we also have 
\begin{align}
& \sup_{(t,x)\in \R_+\times \R^3} \jb{t+|x|}
\Bigl\{\jb{t+|x|}^{-\lambda}|\widetilde{v}(t,x)|_{m+1}
\nonumber\\
& \qquad\qquad\qquad\qquad\qquad\qquad\qquad
{}+\jb{t-|x|}^\rho|\widetilde{w}(t,x)|_{m+2}\Bigr\}\le C\ve^2,
\label{Fukkie-t}
\\
\label{Chato-t}
& \sup_{t\in \R_+} \left\{(1+t)^{-\lambda} \|\pa \widetilde{v}(t, \cdot)\|_{2m}
{}+\|\pa \widetilde{w}(t,\cdot)\|_{2m}\right\}\le C\ve^2. 
\end{align}
Here ${\mathcal U}_0[f_j,g_j]$ means ${\mathcal U}_0[f_j(\cdot;\ve), g_j(\cdot;\ve)]$
for $1\le j\le N$.
\end{theorem}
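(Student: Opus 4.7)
My plan is a standard continuity argument based on weighted $\Gamma$-vector-field energy estimates, adapted to the block splitting $u=(v,w)$ introduced in Condition~\ref{OurCond}. Starting from the local-in-time smooth solution, I would make the bootstrap hypothesis that \eqref{Fukkie} and \eqref{Chato} hold on $[0,T)$ with the constant $C\ve$ replaced by some larger $A\ve$, and then seek to recover the same bounds with $A\ve/2$ once $\ve$ is sufficiently small; closing the bootstrap yields both global existence (since any blow-up would force the weighted norms to diverge) and the asserted estimates. The key structural inputs are the decomposition \eqref{Form01}, which confines the $u$-dependence of $F$ to terms of the schematic form $w(\pa v)$ appearing only in the first $N'$ equations, and the equivalence \eqref{Form02}, which, via Definition~\ref{EquiNL}, strips the ``bad'' part of the nonlinearity at the cost of an extra factor of $\jb{t+|x|}^{-1}$.

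For the $w$-equations, the relation $F_j^1(\pa u)\stackrel{D}{\sim}0$ for $N'+1\le j\le N$ combined with Lemma~\ref{NullNull} (the enhanced decay for null-type nonlinearities) grants an additional $\jb{t+|x|}^{-1}$-factor on the source of $\dal w=F^1(\pa u)$. Inserting this into the Klainerman--Sideris / Lindblad--Rodnianski weighted energy inequality, then applying the Klainerman--Sobolev inequality, is what delivers both $\|\pa w\|_{2m}=O(\ve)$ and the pointwise bound $|w|_{m+2}\lesssim \ve\jb{t+|x|}^{-1}\jb{t-|x|}^{-\rho}$ claimed in \eqref{Fukkie}--\eqref{Chato}. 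For the $v$-equations, I would invoke \eqref{Form02} to replace $F_j$ by $G_j(\omega,u,\pa u)$ modulo an error carrying the same extra $\jb{t+|x|}^{-1}$. Since $G_j$ is a sum of terms each containing a factor of $w$ or $\pa w$, the already-established $w$-bounds supply the good factor; the integrable weight $\jb{t-|x|}^{-\rho}$ (using $\rho>1/2$) then produces only a logarithmic source in Gr\"onwall and yields the controlled growth $(1+t)^{\lambda}$ of $\|\pa v\|_{2m}$.

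The main obstacle will be juggling the two different growth rates at the highest order of differentiation. After commuting $\Gamma^\alpha$ through the nonlinearity, one encounters products in which several factors involving $v$ and its derivatives can simultaneously carry the $(1+t)^\lambda$-growth, while one must still extract sufficient $\jb{t-|x|}^{-\rho}$-weights from the $w$-factors to compensate; the index-splitting $[s/2]+1$ versus $s+1$ in \eqref{Tama} dictates precisely how to distribute the norms between $L^\infty$ via Klainerman--Sobolev and $L^2$ via the energy. Counting the worst product that arises after commutations gives roughly six powers of $(1+t)^\lambda$ before cancellation, which is exactly what pins down the numerical constraints $\lambda<1/20$ and $1/2<\rho\le 1-6\lambda$. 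This bookkeeping, carried out consistently at all orders up to $2m$ for the energy norm and $m+2$ for the pointwise norm, is the technical heart of the argument.

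Finally, the difference estimates \eqref{Fukkie-t}--\eqref{Chato-t} follow by a short supplementary run of the same machinery. Since $\widetilde v_j$ and $\widetilde w_k$ satisfy $\dal\widetilde v_j=F_j(u,\pa u)$ and $\dal\widetilde w_k=F_{N'+k}(u,\pa u)$ with vanishing Cauchy data, and since \eqref{Fukkie}--\eqref{Chato} already bound the right-hand sides by quadratic expressions that are $O(\ve^2)$ in the relevant weighted norms, replaying the same weighted energy plus Klainerman--Sobolev argument with trivial initial data and an $O(\ve^2)$ forcing replaces $\ve$ by $\ve^2$ throughout, which is exactly \eqref{Fukkie-t}--\eqref{Chato-t}.
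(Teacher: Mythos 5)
Your overall architecture (local existence plus a bootstrap on weighted $\Gamma$-norms, using the block structure of Condition~\ref{OurCond} and the enhanced decay of Definition~\ref{EquiNL}) matches the paper's, and your organization—first close an $O(\ve)$ bootstrap for $u$, then rerun the machinery with zero data and an $O(\ve^2)$ source to get \eqref{Fukkie-t}--\eqref{Chato-t}—is an acceptable variant of the paper's bootstrap, which is instead performed directly on $\widetilde u=u-\ve u^0$ at size $A\ve^2$. But there is a genuine gap in how you propose to obtain the pointwise bounds. You claim that the weighted energy inequality followed by the Klainerman--Sobolev inequality delivers $|w|_{m+2}\lesssim \ve\jb{t+|x|}^{-1}\jb{t-|x|}^{-\rho}$ (and, implicitly, the $\jb{t+|x|}^{\lambda-1}$ bound on the undifferentiated $|v|_{m+1}$ in \eqref{Fukkie}). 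Lemma~\ref{KlainermanIneq} can never give more than the factor $\jb{t-|x|}^{-1/2}$, while the theorem requires $\rho>1/2$; moreover it only applies to quantities whose $\Gamma$-$L^2$ norms are finite, and the energy controls $\|\pa\Gamma^\alpha u\|_{L^2}$, not $\|\Gamma^\alpha w\|_{L^2}$ or $\|\Gamma^\alpha v\|_{L^2}$ themselves. The sharper decay is not cosmetic: the factor $\jb{t-r}^{-\rho}$ with $\rho>1/2$ from the $w$-block is exactly what makes the $G_j$-terms and the subsequent $v$- and $\widetilde v$-estimates close. The paper obtains it from the weighted $L^\infty$--$L^\infty$ estimate of Lemma~\ref{Asakura} (the Asakura/Kubota--Yokoyama-type bound on the solution of the inhomogeneous wave equation), applied iteratively at decreasing orders (Steps 3--4 of the proof), with Lemma~\ref{Frame02} converting bounds on $w$ into bounds on $\pa w$; your proposal contains no substitute for this ingredient.

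A second concrete point you do not address is the top-order error term coming from \eqref{Tama}: it contains $|\pa u|_{[s/2]}|u|_{s+1}$, so at $s=2m$ the energy estimate produces a source involving $|\Gamma u|_{2m}$, which is not controlled by the energy $\|\pa u\|_{2m}$. In the paper this is resolved by including the Alinhac-type space-time ``ghost weight'' quantity $\bigl(\int_0^T\!\!\int \jb{t}^{-4\lambda}\jb{t-|x|}^{-2}|\widetilde u|_{Z,2m}^2\,dx\,dt\bigr)^{1/2}$ in the bootstrap norm \eqref{Amount}, using Lemma~\ref{AlinhacGhost} to propagate it, and Lemma~\ref{frame03} (namely $|\Gamma u|_s\lesssim r|u|_{Z,s}+\jb{t-r}|\pa u|_s$) together with the Hardy inequality of Lemma~\ref{LindbladIneq} and a Cauchy--Schwarz in time (using $6\lambda<1$) to absorb the dangerous term; this is also where the constraints on $\lambda$ and $\rho$ actually enter. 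Your remark that the index splitting $[s/2]+1$ versus $s+1$ ``dictates how to distribute the norms'' does not meet this difficulty, since when $|u|_{s+1}$ is the high-order factor it must go into $L^2$, and without the $Z$-field space-time component in the bootstrap there is nothing to pay for it.
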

This result can be proved by some modification of the arguments
in Alinhac \cite{Ali06} (see also \cite{Kat-Kub08:02}). 
We will give the proof of Theorem~\ref{GE} in Section \ref{ProofGlobal}.
The estimates \eqref{Fukkie}--\eqref{Chato-t} play important roles in the 
proof of our next theorem on the pointwise behavior of the solutions.
\subsection{Asymptotic pointwise behavior}\label{SAPB}
Suppose that Condition~\ref{OurCond} is fulfilled. 
Let an $\R^{N''}$-valued function
$$
\zeta=\zeta(\sigma, \omega)=\left(\zeta_1(\sigma, \omega),
\ldots, \zeta_{N''}(\sigma, \omega) \right)^{\rm T}
$$
of $(\sigma,\omega)\in \R\times S^2$ be given. We define an $N'\times N'$ matrix-valued
function $\tens{A}[\zeta]$ 
by
\begin{align}
\label{Weiss}
\tens{A}[\zeta](\sigma, \omega)=&
\left(A_{jk}[\zeta](\sigma, \omega)\right)_{1\le j, k\le N'},\\
{A}_{jk}[\zeta](\sigma, \omega)= &
-\frac{1}{2}\sum_{a=0}^3 \omega_a
\sum_{l=1}^{N''} 
\biggl(c_{jk}^{a, l} (\omega) \zeta_l(\sigma, \omega)
{}+\sum_{b=0}^3
d_{jk}^{a, lb}(\omega) \omega_b (\pa_\sigma \zeta_l)(\sigma, \omega)
\biggr)
\label{AssB}
\end{align}
for $(\sigma, \omega)\in \R\times S^2$,
where the functions $c_{jk}^{a, l}$ and $d_{jk}^{a, lb}$
are from \eqref{AssA} in Condition~\ref{OurCond}.
For a matrix $\tens{B}$ we define $e^{\tens{B}}(=\exp \tens{B})$ in the standard way of 
$$
 e^{\tens{B}}=\tens{I}+\sum_{k=1}^\infty \frac{1}{k!} \tens{B}^k,
$$ 
where $\tens{I}$ is the identity matrix.

Now we are in a position to state our main result on the asymptotic pointwise behavior:
\begin{theorem}\label{PointwiseAsymptotics}
Assume that 
Condition~$\ref{OurCond}$
is fulfilled,
and let $\lambda$, $\rho$ and $\ve_0$ be from Theorem $\ref{GE}$.
Let $D(\subset {\mathcal X}_N)$ be from Condition~$\ref{OurCond}$.

Suppose that we have $0<\ve\le \ve_0$.
Let 
$u=(v^{\rm T},w^{\rm T})^{\rm T}$ be the global solution to \eqref{OurSys}-\eqref{Data0}
with $(f,g)\in D$.
Then there exist
$V=V(\sigma, \omega)=\left(V_j(\sigma, \omega)\right)_{1\le j\le N'}^{\rm T}$,
$W=W(\sigma, \omega)=\left(W_k(\sigma, \omega)\right)_{1\le k\le N''}^{\rm T}$, and a positive constant $C$ such that
we have
\begin{align}
& \sum_{a=0}^3 \left|r(\pa_a v)(t,r\omega)-
 \ve \omega_a e^{(\ve \log t) \tens{A}[W](r-t, \omega)}
 (\pa_\sigma V)(r-t, \omega)\right|\nonumber\\
& \qquad\qquad\qquad\qquad\qquad\qquad\qquad\qquad\qquad\qquad\quad
\le C\ve \jb{t+r}^{3\lambda+C\ve-1}, 
\label{Ruh} \\
\label{Piyoko}
& \left|r w(t, r\omega)-\ve W(r-t, \omega)\right|\le C\ve \jb{t+r}^{2\lambda-1}\jb{t-r}^{1-\rho},\\
\label{Chada}
& \sum_{a=0}^3\left| r(\pa_a w)(t,r\omega)
{}-\ve \omega_a (\pa_\sigma W)(r-t, \omega)\right| \le C\ve \jb{t+r}^{2\lambda-1}\jb{t-r}^{-\rho}
\end{align}
for any $(t,r)\in \R_+ \times \R_+$ with $r \ge t/2\ge 1$,
and any  $\omega=(\omega_1,\omega_2,\omega_3) \in S^2$,
where $\tens{A}[W](\sigma, \omega)$ is
given by \eqref{Weiss}.
Here $V$ and $W$ may depend on $\ve$, but the constant $C$ is independent of $\ve$.

Moreover, there exists a positive constant $C$, being independent of $\ve$, 
such that we have
\begin{align}
\label{As01}
& \sum_{j=1}^{N'}|\pa_\sigma V_j(\sigma, \omega)-\pa_\sigma {\mathcal F}_0[f_j, g_j](\sigma, \omega)|\le 
C\ve (1+|\sigma|)^{3\lambda+C\ve-1}, \\ 
\label{As02}
& \sum_{k=1}^{N''}|\pa_\sigma^l W_k(\sigma, \omega)-\pa_\sigma^l {\mathcal F}_0[f_{k+N'}, g_{k+N'}](\sigma, \omega)|
    \le C \ve (1+|\sigma|)^{-\rho-l}
\end{align}
for any $(\sigma, \omega)\in \R\times S^2$ and $l=0,1$, where ${\mathcal F}_0$ is defined by \eqref{FriedRad}.
Here ${\mathcal F}_0[f_j, g_j]$ means 
${\mathcal F}_0[f_j(\cdot;\ve), g_j(\cdot;\ve)]$ for $1\le j\le N$.
\end{theorem}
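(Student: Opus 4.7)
The plan is to study the rescaled unknowns $U^v(t,\sigma,\omega):=rv(t,r\omega)$ and $U^w(t,\sigma,\omega):=rw(t,r\omega)$ as functions of $t$ with $\sigma=r-t$ and $\omega\in S^2$ held fixed, throughout the exterior region $r\ge t/2\ge 1$ where the decay estimates \eqref{Fukkie}--\eqref{Chato-t} of Theorem~\ref{GE} are available. The key identity
\begin{equation*}
(\pa_t-\pa_r)(\pa_t+\pa_r)(ru)=r\dal u+r^{-1}\Delta_{S^2}u,
\end{equation*}
together with the fact that $(\pa_t+\pa_r)|_{(t,x)}$ coincides with the $t$-derivative at fixed $(\sigma,\omega)$, reduces each scalar wave equation, to leading order, to a first-order transport equation along outgoing rays: the angular term $r^{-1}\Delta_{S^2}u$ gains a factor of $\jb{t+r}^{-1}$ through the rotational vector fields in \eqref{Rotation}, so only the principal radial part contributes.

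First I would construct $W$ by treating the $w$-block, since $\tens{A}[W]$ in \eqref{Ruh} depends on $W$ alone. Since Condition~\ref{OurCond} gives $F_j\stackrel{D}{\sim}0$ for $N'+1\le j\le N$, the defining inequality \eqref{Tama} combined with \eqref{Fukkie}--\eqref{Chato} shows that $r\,\dal w$ is integrable in $t$ along $\sigma=\text{const}$ with rate $\ve^2\jb{t+r}^{2\lambda-1}\jb{t-r}^{-\rho}$. A Cauchy-sequence argument for $(\pa_t+\pa_r)(rw)(t,r\omega)$ then produces the limit $\ve\,\pa_\sigma W(\sigma,\omega)$; integrating in $\sigma$ from $+\infty$ (where $U^w$ vanishes by finite speed of propagation, since $(f,g)\in D$ have compact support) defines $W$ itself, and the rate of convergence yields the pointwise bounds \eqref{Piyoko}--\eqref{Chada}.

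Next, I would pass to $v$. On the cone, the equivalence \eqref{Form02} lets me substitute $F_j(u,\pa u)$ by $G_j(\omega,u,\pa u)$ up to an error controlled by \eqref{Tama}. Inside $G_j$ I would use the on-cone substitutions $\pa_a v_k\sim r^{-1}\omega_a\,\pa_\sigma(rv_k)$ and $\pa_b w_l\sim\omega_b\,\pa_\sigma w_l$, and replace $(w,\pa_\sigma w)$ by their already-established asymptotic values $(\ve W/r,\ve\pa_\sigma W/r)$. This reduces the evolution equation for $\Psi(t,\sigma,\omega):=\pa_\sigma U^v(t,\sigma,\omega)\in\R^{N'}$, to leading order, to the linear matrix ODE
\begin{equation*}
\pa_t\Psi(t,\sigma,\omega)=\frac{\ve}{t}\,\tens{A}[W](\sigma,\omega)\,\Psi(t,\sigma,\omega)+R(t,\sigma,\omega),
\end{equation*}
where the precise form of the coefficient follows by matching the quadratic substitution against \eqref{AssB}, and the remainder satisfies $|R|\lesssim \ve^2\jb{t+r}^{3\lambda-2}$. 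The coefficient $\ve/t$ is exactly the one producing the slow time $\tau=\ve\log t$: the homogeneous propagator from $t=1$ to $t$ is $e^{\ve\log t\,\tens{A}[W](\sigma,\omega)}$, whose operator norm grows at most like $t^{C\ve}$, and variation of parameters combined with $\int_1^\infty s^{3\lambda+C\ve-2}\,ds<\infty$ (valid because $\lambda<1/20$ and $\ve$ is small) shows that $e^{-\ve\log t\,\tens{A}[W]}\Psi(t,\sigma,\omega)/\ve$ converges as $t\to\infty$ to a limit $\pa_\sigma V(\sigma,\omega)$ with quantitative rate $\jb{t+r}^{3\lambda+C\ve-1}$. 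Unwinding and using $r\pa_a v\sim r^{-1}\omega_a\Psi$ gives \eqref{Ruh}. The estimates \eqref{As01}--\eqref{As02} follow by running the same transport analysis from $t=1$ back down to the initial time $t=0$, comparing with ${\mathcal U}_0[f_j,g_j]$ via \eqref{FriedAsymp} and using the $O(\ve^2)$ bounds on $\widetilde v,\widetilde w$ in \eqref{Fukkie-t}--\eqref{Chato-t}.

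The main obstacle is the quantitative error control. At every step above (the $F\stackrel{D}{\sim}G$ reduction, the gain $\pa_a-r^{-1}\omega_a(\pa_t+\pa_r)$ for passing between ordinary and radiation-field derivatives, the angular correction $r^{-1}\Delta_{S^2}$, and the replacement of $(w,\pa_\sigma w)$ by $(\ve W/r,\ve\pa_\sigma W/r)$ inside $\tens{A}$), errors feed into $R$, and they must collectively decay fast enough to survive integration against the polynomial growth $t^{C\ve}$ of the propagator, carried out at all $\Gamma$-orders needed to close the scheme. The restriction $\lambda<1/20$ in Theorem~\ref{GE} is calibrated precisely so that $3\lambda+C\ve-1<0$ for small $\ve$, which is what makes the improper integral, and hence the limit defining $V$, converge, and fixes the rate in \eqref{Ruh}.
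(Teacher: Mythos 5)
Your proposal follows essentially the same route as the paper's proof: first construct $W$ by integrating the transport equation $\pa_+\pa_-(rw)=r\dal w+r^{-1}\Delta_\omega w$ along outgoing rays (the right-hand side being integrable thanks to $F_j\stackrel{D}{\sim}0$ and Theorem~\ref{GE}), then treat the $v$-block by conjugating the transverse derivative of $rv$ with the propagator $e^{(\ve\log t)\tens{A}[W](r-t,\omega)}$ and showing the conjugated quantity converges with the rate $\jb{t+r}^{3\lambda+C\ve-1}$, and finally obtain \eqref{As01}--\eqref{As02} by comparing with $\ve\,{\mathcal U}_0[f_j,g_j]$ through the $O(\ve^2)$ bounds \eqref{Fukkie-t}--\eqref{Chato-t} and \eqref{FriedAsymp}. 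The only slip is notational: for the $w$-block the quantity converging to the profile derivative along rays is the transverse derivative $\pa_-(rw)$ (equivalently $\pa_\sigma$ of $rw$ at fixed $t$), not $(\pa_t+\pa_r)(rw)$, which tends to zero — but since you correctly identify $\pa_+$ as the derivative along the ray and use the correct quantity $\pa_\sigma(rv)$ in the $v$-step, this is clearly only a misprint.
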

In what follows, we refer to $V$ in the above as the {\it modified asymptotic profile}
for $v$, and $W$ as the {\it standard asymptotic profile} for $w$.
\begin{remark}\label{ZeroAsymp}
\normalfont
(i) Let $(\varphi, \psi) \in C^\infty_0\times C^\infty_0$,
and suppose that $(\varphi, \psi)\not \equiv (0,0)$.
Then we have 
$\pa_\sigma {\mathcal F}_0\left[\varphi,\psi\right]\not \equiv 0$
(see Section \ref{translation} below).
Hence, for each $j\in \{1,\ldots, N'\}$ (resp.~$k\in\{1,\ldots, N''\}$),
it follows from \eqref{As01} (resp.~\eqref{As02})
that 
$$
\text{$\pa_\sigma V_j\not \equiv 0$ (resp.~$\pa_\sigma W_k\not \equiv 0$) for $0<\ve\ll 1$, }
$$
unless 
$$
\text{$\bigl(f_j(\cdot;0),g_j(\cdot;0)\bigr) \equiv (0,0)$
(resp.~$\bigl(f_{k+N'}(\cdot;0),g_{k+N'}(\cdot;0)\bigr) \equiv (0,0)$)}
$$
in Theorem~\ref{PointwiseAsymptotics}.

(ii) If $\varphi, \psi\in C^\infty_0(\R^3)$, then 
${\mathcal F}_0[\varphi,\psi](\sigma, \omega)=0$ for large $|\sigma|$ (see \eqref{RadSupport} below), and ${\mathcal F}_0[\varphi, \psi]$ is bounded in $\R\times S^2$. 
Therefore \eqref{As01} and \eqref{As02} yield
\begin{align}
 |\pa_\sigma V(\sigma, \omega)| \le & C (1+|\sigma|)^{3\lambda+C\ve-1},
 \label{As01a}\\
 |\pa_\sigma^l W(\sigma, \omega)| \le & C (1+|\sigma|)^{-\rho-l}, \quad l=0,1
 \label{As02a}
\end{align}
for $(\sigma,\omega)\in \R\times S^2$ and $0<\ve\le \ve_0$,
respectively.
\end{remark}

The asymptotic behavior in the energy sense also follows from
Theorem~\ref{PointwiseAsymptotics}.
\begin{corollary}\label{EnergyAsymptotics}
Suppose that all the assumptions in Theorem~$\ref{PointwiseAsymptotics}$ are fulfilled,
and let $u=(v^{\rm T},w^{\rm T})^{\rm T}$, $W$, and $D$ be as 
in Theorem~$\ref{PointwiseAsymptotics}$. 
Then, for $(f,g)\in D$ and sufficiently small $\ve>0$, there exist some functions 
$(f^+_{j}, g^+_{j})\in \dot{H}^1(\R^3)\times L^2(\R^3)$ $(j=1,\ldots, N)$
such that we have
\begin{align}
\label{PiyokoE}
& \lim_{t\to \infty}
\left(\frac{1}{2}\sum_{a=0}^3 \|e^{-\tens{\Theta}_\ve^+(t,\cdot)}\pa_a v(t,\cdot)-\pa_a v^+(t, \cdot)\|_{L^2(\R^3)}^2\right)^{1/2}=0,\\
\label{ChadaE}
& \lim_{t\to \infty} \|w(t,\cdot)-w^+(t,\cdot)\|_E=0,
\end{align}
where 
\begin{align*}
((v^+)^{\rm T}, (w^+)^{\rm T})=&\left(v^+_{1}, \ldots, v^+_{N'}, w^+_{1},\ldots, w^+_{N''}\right)
=\left(\ve\, {\mathcal U}_0[f^+_{1}, g^+_{1}], \ldots,
        \ve\, {\mathcal U}_0[f^+_{N}, g^+_{N}]\right),\\
\tens{\Theta}_\ve^+(t,x)=&
\begin{cases}
(\ve \log t) \tens{A}[W](|x|-t, x/|x|), & (t,x)\in [2, \infty)\times (\R^3\setminus\{0\}), \\
\tens{O}, & \text{otherwise}.
\end{cases}
\end{align*}
Here $\|\cdot\|_E$ is given by \eqref{DefEnergyNorm},
and $\tens{O}$ is the zero matrix.
\end{corollary}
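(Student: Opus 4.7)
The plan is to derive both convergences from the pointwise asymptotics of Theorem~\ref{PointwiseAsymptotics} by inverting the linear Friedlander radiation map and then transferring pointwise bounds to $L^2$ via a split of $\R^3$ into the interior and exterior of the forward light cone. First I would construct $(f_j^+,g_j^+)\in \dot{H}^1(\R^3)\times L^2(\R^3)$. By Remark~\ref{ZeroAsymp}(ii), the assumptions $\lambda<1/20$ (so $6\lambda+2C\ve<1$ for small $\ve$) and $\rho>1/2$ guarantee $\pa_\sigma V,\pa_\sigma W\in L^2(\R\times S^2)$. The classical Friedlander radiation-field theorem (see H\"ormander~\cite{Hoe97}) asserts that $(\varphi,\psi)\mapsto \pa_\sigma \mathcal{F}_0[\varphi,\psi]$ is a bijective bounded linear map from $\dot{H}^1(\R^3)\times L^2(\R^3)$ onto $L^2(\R\times S^2)$ (up to a constant factor). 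Invoking its surjectivity, I pick $(f_j^+,g_j^+)$ so that $\pa_\sigma \mathcal{F}_0[f_j^+,g_j^+]=\ve\,\pa_\sigma V_j$ for $1\le j\le N'$ and $\pa_\sigma \mathcal{F}_0[f_{N'+k}^+,g_{N'+k}^+]=\ve\,\pa_\sigma W_k$ for $1\le k\le N''$, and define $v^+,w^+$ as the corresponding free solutions scaled by $\ve$.

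To prove \eqref{ChadaE}, I split $\R^3=\{|x|<t/2\}\cup\{|x|\ge t/2\}$. On the exterior, the triangle inequality gives
\begin{equation*}
|\pa_a(w-w^+)|^2\le \frac{2}{r^2}\bigl|r\pa_a w-\ve\omega_a \pa_\sigma W(r-t,\omega)\bigr|^2+\frac{2}{r^2}\bigl|\ve\omega_a \pa_\sigma W(r-t,\omega)-r\pa_a w^+\bigr|^2.
\end{equation*}
The first term, via \eqref{Chada}, contributes at most $C\ve^2 t^{4\lambda-2}\int_\R \jb{\sigma}^{-2\rho}\,d\sigma\to 0$ because $\rho>1/2$. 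The second term tends to $0$ by the classical Friedlander asymptotic for free waves with $\dot{H}^1\times L^2$ data, namely $\int_{\R_+\times S^2}|r\pa_a w^+-\omega_a \pa_\sigma \mathcal{F}_0[f^+_{\cdot},g^+_{\cdot}]|^2\,dr\,d\omega\to 0$. On the interior, \eqref{Fukkie} gives $|\pa w|\le C\ve\, t^{-1}(t-|x|)^{-\rho}$, so $\int_{|x|<t/2}|\pa w|^2\,dx\le C\ve^2 t^{1-2\rho}\to 0$; and $\int_{|x|<t/2}|\pa w^+|^2\,dx\to 0$ by the classical energy-escape property of finite-energy free waves.

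The proof of \eqref{PiyokoE} follows the same dyadic template with \eqref{Ruh} replacing \eqref{Chada} and the matrix modifier $e^{-\tens{\Theta}_\ve^+}$ inserted. On the exterior, the boundedness $|A[W]|\le C$ (from \eqref{As01a}--\eqref{As02a}) gives $\|e^{-\tens{\Theta}_\ve^+}\|_{\mathrm{op}}\le t^{C\ve}$, so multiplying \eqref{Ruh} by $e^{-\tens{\Theta}_\ve^+}$ yields $|r e^{-\tens{\Theta}_\ve^+}\pa_a v-\ve\omega_a \pa_\sigma V(r-t,\omega)|\le C\ve\, t^{C\ve}\jb{t+r}^{3\lambda+C\ve-1}$; integrating against $dr\,d\omega$ over $\{r\ge t/2\}$ yields a total bound $\le C\ve^2 t^{6\lambda+4C\ve-1}\to 0$ for $\lambda<1/20$ and small $\ve$, while the free-wave side again vanishes by Friedlander. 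On the interior, the decay $|A[W](|x|-t,\cdot)|\le C\jb{t-|x|}^{-\rho}$ of $A[W]$ far from the light cone forces $|\tens{\Theta}_\ve^+|\le C\ve(\log t)\,t^{-\rho}\to 0$, so $e^{-\tens{\Theta}_\ve^+}=\tens{I}+o(1)$ uniformly; and the $v^+$-contribution on the interior vanishes by energy escape.

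The main obstacle is the remaining interior bound $\int_{|x|<t/2}|\pa v|^2\,dx\to 0$. Strong Huygens annihilates the free part $\ve\mathcal{U}_0[f,g]$ on $|x|<t-R$ for large $t$, so the question reduces to $\int_{|x|<t/2}|\pa\widetilde v|^2\,dx$; but \eqref{Chato-t} only gives $\|\pa\widetilde v\|_{L^2}\le C\ve^2 t^\lambda$, and the naive pointwise calculation via \eqref{Fukkie-t} produces an even worse $O(\ve^4 t^{2\lambda+1})$ on this region. I expect the resolution to require a refined $L^2$ estimate exploiting $\dal\widetilde v=F(u,\pa u)$ together with the pointwise decay of $F$ guaranteed by Condition~\ref{OurCond} and the fast-decaying $w$-estimates, in order to show that the $t^\lambda$-growth of $\pa\widetilde v$ is concentrated in a thin shell near the light cone $|x|\sim t$ and that the interior energy tends to zero.
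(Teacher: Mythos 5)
Your overall strategy (pointwise asymptotics $\to$ polar-coordinate $L^2$ bounds, plus the isometric radiation-field/translation-representation map to manufacture $(f_j^+,g_j^+)$ and Lax--Phillips-type $L^2$ convergence of the free part) is the paper's strategy, but as written the proof is incomplete at exactly the point you flag: the interior bound $\int_{|x|<t/2}|\pa v|^2\,dx\to 0$. This is a genuine gap in your write-up, yet it needs no ``refined $L^2$ estimate'' at all: you dropped the $\jb{t-|x|}^{-1}$ gain that is already available. From \eqref{Fukkie} one has $|v(t,x)|_{m+1}\le C\ve\jb{t+|x|}^{\lambda-1}$, and Lemma~\ref{Frame02} then gives $|\pa v(t,x)|\le C\ve\jb{t+|x|}^{\lambda-1}\jb{t-|x|}^{-1}$ (this is recorded as \eqref{DE01}). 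On $|x|<t/2$ we have $\jb{t-|x|}\ge c\,\jb{t+|x|}$, so $|\pa v|\le C\ve\jb{t+|x|}^{\lambda-2}$ and $\int_{|x|<t/2}|\pa v|^2dx\le C\ve^2 t^{2\lambda-1}\to 0$ since $\lambda<1/20$; no use of \eqref{Chato-t} or \eqref{Fukkie-t} alone is needed, and there is nothing special concentrating near the cone to exploit. (The same remark disposes of your worry for $\widetilde v$.)

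In fact the paper avoids the interior/exterior split altogether: at the start of Section~\ref{EneBehavior} it observes that \eqref{Ruh}, \eqref{Piyoko}, \eqref{Chada} remain valid for all $(t,r)\in[2,\infty)\times(0,\infty)$, because for $r<t/2$ the two sides are \emph{separately} dominated by the stated right-hand sides (estimates \eqref{Tea01}--\eqref{Tea03}, using \eqref{DE01}, \eqref{DE02}, \eqref{As01a}, \eqref{As02a}, \eqref{PhaseEst01}); then a single polar-coordinate integration over all $r$ gives \eqref{GEE03}--\eqref{GEE04}, and the conclusion follows from Lemma~\ref{Trans01} after setting $(f_j^+,g_j^+)={\mathcal T}^{-1}[\pa_\sigma V_j]$ and $(f_{N'+k}^+,g_{N'+k}^+)={\mathcal T}^{-1}[\pa_\sigma W_k]$ -- this lemma is precisely the rigorous form of your appeal to the ``classical Friedlander asymptotic'' and ``energy escape'' for data merely in $\dot H^1\times L^2$, for which the pointwise estimate \eqref{FriedAsymp} does not directly apply. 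One small normalization slip to fix: since $v_j^+=\ve\,{\mathcal U}_0[f_j^+,g_j^+]$, the data must satisfy $\pa_\sigma{\mathcal F}_0[f_j^+,g_j^+]=\pa_\sigma V_j$, not $\ve\,\pa_\sigma V_j$; your choice introduces a spurious extra factor of $\ve$.
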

Theorem~\ref{PointwiseAsymptotics} and Corollary~\ref{EnergyAsymptotics}
will be proved in Sections~\ref{ProofPointwise} and \ref{EneBehavior}, 
respectively.

Comparing \eqref{Piyoko} and \eqref{Chada} with \eqref{FriedAsymp}, we see that
$w$ behaves similarly to the free solutions in the pointwise sense
(and \eqref{ChadaE} says that $w$ is asymptotically free in the energy sense), 
but $v$ may behave quite differently from the free solutions
because of the exponential factor in \eqref{Ruh} (and also in \eqref{PiyokoE}).
Here we give some applications of Theorem \ref{PointwiseAsymptotics}
to answer Question~$\ref{FirstQ}$ in the introduction.
To simplify the exposition, we introduce the following notation:
For functions $\varphi=\varphi(t, r, \omega)$ and $\psi=\psi(t,r,\omega)$
of $(t,r,\omega)\in \R_+ \times \R_+ \times S^2$,   
we write $\varphi(t,r,\omega)\approx \psi(t, r, \omega)$,
if we have
$$
 \lim_{t\to \infty} |(\varphi-\psi)(t, t+\sigma, \omega)|=0, \quad (\sigma, \omega)\in \R\times S^2.
$$
\begin{example}\label{FirstE}
\normalfont
Let $(u_1, u_2, w)^{\rm T}$ be the global solution to \eqref{simplestExR}
with initial data
\begin{equation}
\label{Data1}
\bigl(u_1, u_2, w\bigr)^{\rm T}=\ve (f_1, f_2, f_3)^{\rm T},
\ \bigl(\pa_t u_1, \pa_t u_2, \pa_t w\bigr)^{\rm T}=\ve (g_1, g_2, g_3)^{\rm T}
\text{ at $t=0$},
\end{equation}
and set $v=(v_1, v_2)^{\rm T}=(u_1, u_2)^{\rm T}$, where $\ve$ is assumed to be sufficiently small.
We suppose $(f, g)\in {\mathcal X}_3$ with $f=(f_j)_{1\le j\le 3}^{\rm T}$ and $g=(g_j)_{1\le j\le 3}^{\rm T}$.
Note that if we want to treat \eqref{simplestEx}, then we only have to impose restrictions
\begin{equation}
\label{Data2}
f_3=\pa_1 f_2-\pa_2 f_1,\ g_3=\pa_1 g_2-\pa_2 g_1.
\end{equation}
Concerning \eqref{simplestExR}, we get
$$
\tens{A}[\zeta](\sigma, \omega)=
-\frac{\zeta(\sigma, \omega)}{2}\left(
\begin{matrix}
\omega_1  & 0 \\
\omega_2  & 0
\end{matrix}
\right),
\text{ and } e^{\tau \tens{A}[\zeta](\sigma, \omega)}=
\left(
\begin{matrix}
e^{-\tau\omega_1\zeta(\sigma, \omega)/2} & 0 \\
\displaystyle\frac{\omega_2}{\omega_1}(e^{-\tau\omega_1\zeta(\sigma, \omega)/2}-1) & 1
\end{matrix}
\right)
$$
for $(\sigma, \omega)\in \R\times S^2$ and $\tau\in \R$,
where $\left. \left((\omega_2/\omega_1)(e^{-\tau\omega_1\zeta(\sigma, \omega)/2}-1)\right)\right|_{\omega_1=0}$
is regarded as $\left. -2^{-1}\tau\omega_2\zeta(\sigma, \omega)\right|_{\omega_1=0}$.
By Theorem \ref{PointwiseAsymptotics}, 
there exist an $\R^2$-valued function $V=V(\sigma, \omega)=\bigl(V_1(\sigma,\omega), V_2(\sigma, \omega)\bigr)^{\rm T}$
and a real-valued function $W=W(\sigma, \omega)$ such that
\begin{align}
\label{Example0101}
r(\pa_a v_1)(t,r\omega) \approx{}  & \ve\omega_a 
t^{-\ve \omega_1W(r-t, \omega)/2} 
(\pa_\sigma V_1)(r-t,\omega),\\
r(\pa_a v_2)(t, r\omega) \approx{}  & \ve \omega_a \frac{\omega_2}{\omega_1} 
\left(t^{-\ve \omega_1W(r-t, \omega)/2}-1\right)
 (\pa_\sigma V_1)(r-t,\omega)
\nonumber\\
&+\ve\omega_a (\pa_\sigma V_2)(r-t, \omega), 
\label{Example0102}\\
\label{Example0103}
r w(t, r\omega)\approx{}  & \ve W(r-t, \omega),\quad r(\pa_a w)(t, r\omega)\approx \ve \omega_a(\pa_\sigma W)(r-t,\omega)
\end{align}
for $0\le a\le 3$.
From this asymptotic pointwise 
behavior, we find that $\pa v_1$ decays slower than the free solutions 
along the line $\left\{\bigl(t, (t+\sigma)\omega\bigr); t>0\right\}$
for fixed $(\sigma, \omega) \in \R\times S^2$ if $\omega_1W(\sigma,\omega)<0$,
and faster if $\omega_1W(\sigma,\omega)>0$.
We can also recover the previous result in \cite{Kat-Kub08:02}
from \eqref{Example0101}--\eqref{Example0103};
namely we can show that if we choose appropriate initial profile
(satisfying \eqref{Data2}), then we have
\begin{equation}
\label{EneGrow01}
C_1\ve (1+t)^{C_1\ve} \le \|u(t)\|_E \le C_2\ve (1+t)^{C_2\ve}
\end{equation}
with some positive constants $C_1$ and $C_2$.
\eqref{EneGrow01}, as well as \eqref{EneGrow02} and \eqref{EneGrow03} below, will be
proved in Proposition~\ref{EneGrow11}.
\end{example}
\begin{example}\label{SecondE}
\normalfont
Let $u=(u_1, u_2, u_3)^{\rm T}=(v_1, v_2, w)^{\rm T}$, and consider
\begin{equation}
\label{LogEx}
\begin{cases}
\dal v_1 = -2(\pa_t w)(\pa_t v_2)+F_1^{0}(\pa u),\\
\dal v_2 = F_2^{0}(\pa u),\\
\dal w = F_3^{0}(\pa u)
\end{cases}
\end{equation}
in $(0,\infty)\times \R^3$ with the initial condition \eqref{Data0} for small $\ve$, where 
$F^{0}=(F_j^{0})^{\rm T}_{1\le j\le 3}$ 
satisfies the null condition
(note that a similar and simpler example is mentioned 
in \cite{Lin-Rod03} and \cite{Lin-Rod05} as an example
for the weak null condition).
Then we have
$$
\tens{A}[\zeta](\sigma, \omega)=
\left(
\begin{matrix}
0 & (\pa_\sigma \zeta)(\sigma, \omega) \\
0 & 0
\end{matrix}
\right), \text{ and }
e^{\tau \tens{A}[\zeta](\sigma, \omega)}=
\left(
\begin{matrix}
1 & \tau (\pa_\sigma\zeta)(\sigma, \omega)\\
0 & 1
\end{matrix}
\right)
$$
for $(\sigma, \omega)\in \R\times S^2$ and $\tau\in \R$.
By Theorem \ref{PointwiseAsymptotics},
for any $(f,g)\in {\mathcal X}_3$ 
there exist $V=V(\sigma, \omega)$
and $W=W(\sigma, \omega)$ such that
we have \eqref{Example0103} and
\begin{align}
r(\pa_a v_1)(t,r\omega) \approx{}  & \ve \omega_a (\pa_\sigma V_1)(r-t,\omega)
\nonumber\\
& {}+\ve^2 \omega_a\left(\log t\right) (\pa_\sigma W)(r-t, \omega)(\pa_\sigma V_2)(r-t,\omega),
\label{Example0201}\\
\label{Example0202}
r(\pa_a v_2)(t, r\omega) \approx{}  & \ve \omega_a (\pa_\sigma V_2)(r-t, \omega)
\end{align}
for $0\le a\le 3$.
This asymptotic pointwise behavior results in the slower growth of the energy
than \eqref{EneGrow01}; 
if we choose appropriate initial profile, then we have
\begin{equation} 
\label{EneGrow02}
C_1\left(\ve+\ve^2\log (1+t) \right)\le \|u(t)\|_E \le C_2\left(\ve+\ve^2\log (1+t) \right)
\end{equation}
with some positive constants $C_1$ and $C_2$.
We remark that this kind of logarithmic growth of the energy was observed by
Sunagawa~\cite{Sun04} for a system of semilinear Klein-Gordon equations with different masses
(see also \cite{Sun05} for the pointwise behavior).
\end{example}
\begin{example}\label{ThirdE}
\normalfont
Let $u=(u_1, u_2, u_3)^{\rm T}=(v_1, v_2, w)^{\rm T}$, and we consider
\begin{equation}
\label{RotEx}
\begin{cases}
\dal v_1 = -2(\pa_t w)(\pa_t v_2){}+F_1^{0}(\pa u), \\
\dal v_2 = 2(\pa_t w)(\pa_t v_1){}+F_2^{0}(\pa u), \\
\dal w = F_3^{0}(\pa u)
\end{cases}
\end{equation}
in $(0,\infty)\times \R^3$ with the initial condition \eqref{Data0}, where 
$F^{0}=\bigl(F_j^{0}\bigr)_{1\le j\le 3}^{\rm T}$
satisfies the null condition as before, and $\ve$ is assumed to be sufficiently small.
For this example, we have
$$
\tens{A}[\zeta](\sigma, \omega)=
(\pa_\sigma \zeta)(\sigma, \omega)
\left(
\begin{matrix}
0   & 1 \\
-1 & 0
\end{matrix}
\right).
$$
Hence, by Theorem~\ref{PointwiseAsymptotics}, for any $(f,g)\in {\mathcal X}_3$ 
there exist $V=V(\sigma, \omega)$
and $W=W(\sigma, \omega)$ such that we have
\eqref{Example0103} and 
$$
\left(
\begin{matrix}
r(\pa_a v_1)(t,r\omega)\\
r(\pa_a v_2)(t, r\omega)
\end{matrix}
\right)
\approx \ve\omega_a e^{(\ve\log t)\tens{A}[W](r-t,\omega)}
\left(
\begin{matrix}
(\pa_\sigma V_1)(r-t, \omega)\\
(\pa_\sigma V_2)(r-t, \omega)
\end{matrix}
\right)
$$ 
for $0\le a\le 3$,
where
\begin{equation}
e^{\tau \tens{A}[W](\sigma, \omega)}=
\left(
\begin{matrix}
\cos \left(\tau(\pa_\sigma W)(\sigma, \omega)\right)  & \sin \left(\tau(\pa_\sigma W)(\sigma, \omega)\right)\\
-\sin  \left(\tau(\pa_\sigma W)(\sigma, \omega)\right) &  \cos\left(\tau(\pa_\sigma W)(\sigma, \omega)\right)
\end{matrix}
\right).
\label{Example0302}
\end{equation}
There is no growth or decay factor in $e^{(\ve\log t) \tens{A}[W]}$,
and we can show that
\begin{equation}
\label{EneGrow03}
C_1\ve\le \|u(t)\|_E\le C_2\ve
\end{equation}
for small $\ve>0$
with some positive constants $C_1$ and $C_2$, unless $\bigl(f(\cdot;0), g(\cdot;0)\bigr)\equiv (0,0)$.
Thus the energy of this system stays bounded from above and below
by positive constants for any initial profile not vanishing at $\ve=0$
(actually the energy is preserved for any initial profile 
if $F^0\equiv 0$).
However the solution behaves quite differently from the free solution
in the pointwise sense, as far as
$(\pa_\sigma W)(\sigma, \omega)\not \equiv 0$.
Furthermore we can also show that there exists some initial profile $(f,g)\in {\mathcal X}_3$
such that the global solution to \eqref{RotEx} is not asymptotically free in the energy sense for $0<\ve\ll 1$ (see Proposition~\ref{EneGrow11} below for the proof).
This example gives a positive answer to Question~$\ref{FirstQ}$. 
\end{example}
\begin{remark}
\normalfont
Observe that Condition~$\ref{AlinhacCond}$
is not satisfied for \eqref{RotEx}, while it is fulfilled for
$\eqref{simplestEx}$ and $\eqref{LogEx}$. This 
is the reason why we have introduced Condition~$\ref{OurCond}$. 
\end{remark}
\subsection{Asymptotically free solutions and the null condition} 
Our aim here is 
to answer Question~$\ref{SecondQ}$ in the introduction.
\begin{definition}\label{wafp}
We say that a global solution $u=u(t,x)$ 
to \eqref{OurSys} is asymptotically free in the pointwise sense,
if there exists some function $U=U(\sigma, \omega)$ such that $r\pa_a u(t,r\omega)\approx\omega_a\pa_\sigma U(r-t,\omega)$
for $0\le a\le 3$, namely
$$
 \lim_{t\to\infty} \sum_{a=0}^3
\left|\left.\left\{r\pa_a u(t, r\omega)-\omega_a (\pa_\sigma U)(r-t, \omega)\right\}\right|_{r=t+\sigma}\right|=0,\quad 
(\sigma, \omega)\in {\R\times S^2}.
$$
\end{definition}
In view of \eqref{FriedAsymp}, by choosing the Friedlander radiation field as $U$,
we see that the solution to the free wave equation $\dal u=0$ 
with $C^\infty_0$-data is
asymptotically free in the pointwise sense.

We say that {(AFP)} (resp.~{(AFE)}) holds
if for any $(f,g) 
\in {\mathcal X}_N$
there exists a positive constant $\ve_1$
such that, for any $\ve \in (0, \ve_1]$,
the global solution $u$ to the Cauchy problem \eqref{OurSys}-\eqref{Data0}
is asymptotically free in the pointwise sense (resp.~in the energy sense).

\begin{theorem}
\label{Necessity}
For the system \eqref{OurSys}
with each $F_j=F_j(u,\pa u)$ being a homogeneous polynomial of degree $2$ in its arguments, the following two are equivalent:
\begin{enumerate}
\item[$(1)$] The null condition is satisfied.
\item[$(2)$] Condition~$\ref{OurCond}$
is satisfied with $D={\mathcal X}_N$,
and $({\rm AFP})$ holds.
\end{enumerate}
\end{theorem}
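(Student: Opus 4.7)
The forward implication $(1) \Rightarrow (2)$ is immediate: if $F$ satisfies the null condition, then Proposition~\ref{Example}(1) applied with $N' = 0$ and $N'' = N$ (so that $\widetilde G_j \equiv 0$ and the whole unknown is of $w$-type) gives Condition~\ref{OurCond} with $D = {\mathcal X}_N$. Theorem~\ref{PointwiseAsymptotics} then provides \eqref{Chada}, whose bound vanishes as $t\to\infty$ along $r = t+\sigma$, yielding $r\,\pa_a u(t, r\omega)\to \omega_a\,\pa_\sigma(\ve W)(\sigma,\omega)$ and hence (AFP) with $U = \ve W$.

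For $(2)\Rightarrow(1)$, the strategy is to combine (AFP) with the expansion \eqref{Ruh} to force ${\tens A}[\cdot]\equiv 0$, which gives the null condition on $G$, and then to transfer the null condition from $G$ to $F$ via the equivalence $F_j\stackrel{{\mathcal X}_N}{\sim}G_j$. Fix $(f,g)\in{\mathcal X}_N$ with $\ve$-independent data, a small $\ve>0$, and $(\sigma,\omega)\in\R\times S^2$. Along $r = t+\sigma$, the error $\jb{t+r}^{3\lambda+C\ve-1}$ in \eqref{Ruh} tends to $0$, so combining \eqref{Ruh} with (AFP) for the $v$-components gives
$$\lim_{t\to\infty} \ve\,e^{(\ve\log t)\,{\tens M}(\ve)}\,\eta(\ve) = \pa_\sigma U_v(\sigma,\omega),$$
where ${\tens M}(\ve) = {\tens A}[W](\sigma,\omega)$ and $\eta(\ve) = \pa_\sigma V(\sigma,\omega)$. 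With $s = \ve\log t$, Jordan-form analysis shows that $e^{s{\tens M}}\zeta$ has a finite limit as $s\to\infty$ iff $\zeta\in \ker{\tens M}\oplus V^-({\tens M})$, where $V^-({\tens M})$ is the generalized eigenspace for eigenvalues of strictly negative real part; thus $\eta(\ve)\in\ker{\tens M}(\ve)\oplus V^-({\tens M}(\ve))$. Repeating the same reasoning for data $(-f,-g)\in{\mathcal X}_N$ (whose leading-order asymptotic profiles for $V, W$ are negated by linearity of ${\mathcal F}_0$ and \eqref{As01}--\eqref{As02}) gives the containment with $V^-$ replaced by $V^+$. Passing to the leading-order $\ve\to 0$ limit in both and using $V^+\cap V^- = \{0\}$, we obtain ${\tens M}^{(0)}\eta^{(0)}=0$, where ${\tens M}^{(0)} = {\tens A}[W^{(0)}](\sigma,\omega)$ and $\eta^{(0)} = \pa_\sigma V^{(0)}(\sigma,\omega)$ are defined via the Friedlander radiation fields of the data $\bigl(f(\cdot;0), g(\cdot;0)\bigr)$.

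By the pointwise surjectivity of the scalar Friedlander radiation field and its $\sigma$-derivative onto $\R$ (a standard property of the Radon transform; cf.\ Remark~\ref{ZeroAsymp}), and by independent choice of each $(f_j(\cdot;0), g_j(\cdot;0))$, the vectors $\eta^{(0)}(\sigma,\omega)\in\R^{N'}$ and $\bigl(W^{(0)}(\sigma,\omega), \pa_\sigma W^{(0)}(\sigma,\omega)\bigr)\in\R^{N''}\times\R^{N''}$ can be prescribed arbitrarily and independently at the fixed point $(\sigma,\omega)$. The linearity of ${\tens A}[\cdot](\sigma,\omega)$ in $(\zeta, \pa_\sigma\zeta)$ (see \eqref{AssB}) then promotes ${\tens M}^{(0)}\eta^{(0)}=0$ for all such data to the identical vanishing ${\tens A}[\zeta](\sigma,\omega)\equiv 0$, equivalently $\sum_a\omega_a c_{jk}^{a,l}(\omega) = 0$ and $\sum_{a,b}\omega_a\omega_b d_{jk}^{a,lb}(\omega) = 0$ for all $\omega, j, k, l$, which is precisely $G_j^{\rm red}\equiv 0$ for $1\le j\le N'$.

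Finally, to upgrade to $F^{\rm red}\equiv 0$: by \eqref{FriedAsymp}, a small-data solution satisfies $u_j\sim\ve r^{-1}{\mathcal F}_0[f_j,g_j]$ and $\pa_a u_j\sim\ve\omega_a r^{-1}\pa_\sigma{\mathcal F}_0[f_j,g_j]$ on the light cone, so the leading-order left-hand side of \eqref{Tama} with $s=0$, $\Phi=F_j$, $\Psi=G_j$ is $\ve^2 r^{-2}(F_j^{\rm red}-G_j^{\rm red})(\omega,{\mathcal F}_0[f,g],\pa_\sigma{\mathcal F}_0[f,g])(\sigma,\omega)$ at $r-t=\sigma$, while the right-hand side is $O(\ve^2 r^{-3})$; pointwise surjectivity forces $F_j^{\rm red}\equiv G_j^{\rm red}$, and the analogous argument with $\Psi = 0$ for $N'+1\le j\le N$ gives $F_j^{\rm red}\equiv 0$ there. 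Combined with $G_j^{\rm red}\equiv 0$, we conclude $F^{\rm red}\equiv 0$, the null condition. The main technical obstacle is the $\ve\to 0$ limit in the subspace containment of the second paragraph: since the Jordan decomposition is not continuous under small perturbations, rigorous justification requires an order-by-order expansion of the semigroup identity or, equivalently, a scaling argument with data $(\lambda f,\lambda g)$ as $\lambda\to 0^\pm$.
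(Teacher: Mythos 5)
Your forward direction and your final step (once $\tens{A}\equiv\tens{O}$ is known, evaluating the nonlinearity along the light cone on data prescribed via Lemma~\ref{RadConst} and letting $\ve\to 0$ to force $F^{\rm red}\equiv 0$) are essentially the paper's argument. The problem is the central step, and you have named it yourself: from (AFP) and \eqref{Ruh} you only get, for each small $\ve>0$, the containment $\pa_\sigma V(\sigma,\omega;\ve)\in{\mathcal Z}\bigl(\tens{A}[W(\cdot,\cdot;\ve)](\sigma,\omega)\bigr)$, where ${\mathcal Z}(\tens{B})=\ker\tens{B}\oplus\bigl(\text{generalized eigenspaces with }\real\mu<0\bigr)$, and this containment does \emph{not} pass to the limit $\ve\to0$, because ${\mathcal Z}$ is highly discontinuous in the matrix. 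Concretely, if $\tens{A}_\ve$ has an eigenvalue $-\delta(\ve)<0$ with $\delta(\ve)\to0$ (e.g.\ a $2\times2$ family with $\tens{A}_0$ a nonzero nilpotent matrix), then ${\mathcal Z}(\tens{A}_\ve)$ can be all of $\R^{N'}$ for every $\ve>0$ while ${\mathcal Z}(\tens{A}_0)$ is a proper subspace; the containment is then vacuous and yields nothing about $\tens{A}_0\eta^{(0)}$. The two-sided trick with $(-f,-g)$ does not repair this: the matrices produced by the negated data are only \emph{approximately} $-\tens{A}_\ve$, so both containments can be simultaneously vacuous, and the intersection argument $\ker\oplus V^-\cap\ker\oplus V^+=\ker$ is never reached. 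The closing sentence of your proposal ("order-by-order expansion of the semigroup identity" or "scaling $(\lambda f,\lambda g)$") is a gesture, not a proof — rescaling the data just reproduces the same small-parameter limit and the same spectral discontinuity — so the deduction $\tens{A}[\cdot]\equiv\tens{O}$ is not established.

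The paper closes exactly this gap by arguing contrapositively with a \emph{perturbation-stable} obstruction instead of passing subspaces to the limit. If some $\tens{A}_0=\tens{B}(\omega',\xi',\eta')$ has an eigenvalue with nonzero (WLOG positive) real part, Lemma~\ref{Eigen02} produces a fixed vector $z^0$ and $\delta>0$ such that $B_\delta(z^0)\cap{\mathcal Z}(\tens{A}_\ve)=\emptyset$ uniformly for small $\ve$; the point is that membership in ${\mathcal Z}$ is tested by a \emph{continuous} polynomial condition $\tens{\Psi}(\ve)y\neq0$ built from $\tens{\Phi}(\ve)$ and its eigenvalues, rather than by the discontinuous spectral subspaces themselves. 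One then uses Lemma~\ref{RadConst} together with \eqref{As01}--\eqref{As02} to manufacture data whose profile $\pa_\sigma V(\sigma',\omega';\ve)$ lies in $B_\delta(z^0)$ for small $\ve$, and Lemma~\ref{Eigen01} contradicts (AFP). A separate second step (Lemma~\ref{Eigen03}) handles the remaining case where all eigenvalues of $\tens{B}$ are purely imaginary but $\tens{B}\neq\tens{O}$, since there ${\mathcal Z}$ reduces to the kernel and the obstruction $\tens{\Phi}(\ve)y\neq0$ is again continuous. You would need to supply these two lemmas (or an equivalent stable criterion) to make your second paragraph rigorous; without them the proof is incomplete at its decisive point.
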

We conjecture that (1) above is also equivalent to (2) with
(AFP) being replaced by (AFE), but this conjecture is still open.

Now let us consider the case of Condition~\ref{AlinhacCond}.
Though we can rewrite the system to another system satisfying 
Condition~\ref{OurCond} (see Proposition~\ref{Example}), 
we cannot apply Theorem~\ref{Necessity} directly
because of the restriction on initial data for the rewritten system.
 Thus we need to consider this case separately.
Making use of the ``rank one'' structure \eqref{AA-1}, 
we can get a better result than Theorem~\ref{Necessity}.
\begin{theorem}
\label{Necessity02}
For the system \eqref{OurSys} with each $F_j=F_j(\pa u)$
being a homogeneous polynomial of degree $2$ in $\pa u$,
the following three are equivalent to each other: 
\begin{enumerate}
\item[$(1)$] The null condition is satisfied.
\item[$(2)$] 
Condition~$\ref{AlinhacCond}$
is satisfied, and $({\rm AFP})$ holds.
\item[$(3)$]
Condition~$\ref{AlinhacCond}$
is satisfied, and $({\rm AFE})$ holds.
\end{enumerate}
\end{theorem}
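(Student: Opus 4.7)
The implications (1) $\Rightarrow$ (2) and (1) $\Rightarrow$ (3) are essentially classical. The null condition trivially satisfies Condition~\ref{AlinhacCond} (take $M\equiv 0$, $g_{jl}\equiv 0$, and $h_l\equiv 0$); Klainerman's theorem provides global small-data solutions, the Friedlander asymptotics \eqref{FriedAsymp} together with the $L^1_t$-integrable decay of null-form nonlinearities along the light cone yields (AFP), and (AFE) is exactly \eqref{AsympFreeE-0}.

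For (2) $\Rightarrow$ (1), I argue by contradiction. Suppose Condition~\ref{AlinhacCond} holds and (AFP) holds, but the null condition fails. I would invoke Proposition~\ref{Example}(2) to extend the system to one of size $N^*\ge N$ satisfying Condition~\ref{OurCond} with some $D\subset\mathcal{X}_{N^*}$, arranged so that each original datum $(f,g)\in\mathcal{X}_N$ lifts to a datum in $D$ with the first $N$ components corresponding to the original $u$. Identifying the original $u$ with the $v$-block of the extended solution, Theorem~\ref{PointwiseAsymptotics} and Definition~\ref{wafp} combine to force
$$
\lim_{t\to\infty} e^{(\ve\log t)\,\tens{A}[W](\sigma,\omega)}(\pa_\sigma V)(\sigma,\omega)
$$
to exist in $\R^{N'}$ for every $(\sigma,\omega)\in\R\times S^2$. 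Since $\ve\log t\to\infty$, any eigenvalue of $\tens{A}[W](\sigma,\omega)$ with nonzero real (resp.\ imaginary) part acting nontrivially on $\pa_\sigma V$ produces polynomial divergence (resp.\ non-convergent oscillation), and a nontrivial nilpotent action produces logarithmic divergence; hence $\tens{A}[W](\sigma,\omega)\,\pa_\sigma V(\sigma,\omega)=0$ identically. Next, using \eqref{As01}--\eqref{As02} and varying $(f,g)\in\mathcal{X}_N$, the profiles $V$ and $W$ are close, up to order $\ve$, to the Friedlander radiation fields of the data, which exhaust a dense class of smooth, $\sigma$-compactly-supported functions on $\R\times S^2$ via the Radon transform. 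This richness promotes the pointwise relation $\tens{A}[W]\,\pa_\sigma V=0$ to vanishing of every coefficient $c_{jk}^{a,l}(\omega)$ and $d_{jk}^{a,lb}(\omega)$ in \eqref{AssB}, hence $G_j\equiv 0$ in \eqref{AssA}. Tracing this back through Proposition~\ref{Example}(2) and using the rank-one decomposition \eqref{AA-1}--\eqref{AAB-2} forces $M\beta\equiv 0$, i.e., $F^{\rm red}\equiv 0$, contradicting the failure of the null condition.

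The argument for (3) $\Rightarrow$ (1) is parallel, with Corollary~\ref{EnergyAsymptotics} replacing Theorem~\ref{PointwiseAsymptotics}: if (AFE) holds, then $e^{-\tens{\Theta}_\ve^+(t,\cdot)}\pa_a v(t,\cdot)$ and $\pa_a v(t,\cdot)$ must share the same $L^2$-limit along the free-wave dynamics, which, by reasoning analogous to the pointwise case together with the concentration of $\pa v$ near the light cone, implies $\tens{A}[W]\,\pa_\sigma V=0$ in an asymptotic $L^2$ sense and, on varying the data, gives the null condition. The lower bounds \eqref{EneGrow01} and \eqref{EnergyGrowth} in the worked examples give a quantitative picture here: nontriviality of $\tens{A}$ forces at least polynomial energy growth of the type $\ve(1+t)^{C\ve}$, which is incompatible with (AFE). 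The principal obstacle is the richness/surjectivity step: after the lift, the Cauchy data of the extended system is confined to the proper subset $D\subset\mathcal{X}_{N^*}$, so the triples $(\pa_\sigma V,W,\pa_\sigma W)$ are not a priori independent. I would address this by exploiting the rank-one structure \eqref{AA-1}--\eqref{AAB-2} together with the explicit formulas for the lifted data in Proposition~\ref{Example}(2) to show that, as $(f,g)$ varies over $\mathcal{X}_N$, the resulting triples exhaust enough of the argument space of the linear map $\zeta\mapsto\tens{A}[\zeta]$ to force all its coefficients to vanish, and finally to verify that this vanishing in the extended system corresponds exactly to $F^{\rm red}\equiv 0$ for the original $F$.
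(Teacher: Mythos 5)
Your overall strategy for $(2)\Rightarrow(1)$ and $(3)\Rightarrow(1)$ — lift via Proposition~\ref{Example}(2), then rerun the eigenvalue/convergence argument of Theorem~\ref{Necessity} and finally ``promote'' the relation $\tens{A}[W]\pa_\sigma V=0$ to vanishing of all coefficients — has two genuine gaps, and the second is exactly the obstacle the paper introduces Theorem~\ref{Necessity02} to circumvent. First, convergence of $e^{\tau\tens{A}}y$ as $\tau\to\infty$ does \emph{not} force $\tens{A}y=0$: by Lemma~\ref{Eigen01} it only forces $y\in{\mathcal Z}(\tens{A})$, which also contains all generalized eigenspaces with eigenvalues of negative real part (there $e^{\tau\tens{A}}y\to 0$, which is perfectly compatible with (AFP), since Definition~\ref{wafp} allows the limit profile to vanish). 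Handling this requires the sign-flip of the data and the perturbation Lemmas~\ref{Eigen02}--\ref{Eigen03}, because both $\tens{A}[W]$ and $\pa_\sigma V$ depend on $\ve$ and are only $O(\ve)$-close to prescribed values via \eqref{As01}--\eqref{As02}; your pointwise dichotomy (``positive real part $\Rightarrow$ divergence, etc.'') skips this. Second, and more seriously, the ``richness/surjectivity'' step you flag as the principal obstacle is left unresolved, and it cannot be resolved the way you suggest: after the lift, the data of the extended system are confined to $D$, so $W$ is \emph{not} free — indeed the paper proves the constraint $W(\sigma,\omega)=\tens{H}(\omega)(\pa_\sigma U)(\sigma,\omega)$ (see \eqref{ExpW01}), which ties $W$ rigidly to $\pa_\sigma U$. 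Consequently one cannot vary $(W,\pa_\sigma V)$ independently to kill all coefficients $c_{jk}^{a,l}$, $d_{jk}^{a,lb}$; nor is that the right target (even in Theorem~\ref{Necessity} the conclusion is only \eqref{TabiZero}, i.e.\ the contracted combinations vanish, followed by an extra asymptotic argument showing $F^{\rm red}$ annihilates the limiting profiles — not ``$G_j\equiv 0$ hence $M\beta\equiv 0$'').

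The paper's actual proof goes the other way around and never needs any exhaustion statement: assuming Condition~\ref{AlinhacCond} holds but the null condition fails, it normalizes $|\beta(\omega)|=1$ near a bad direction, rewrites the coefficients ($g_{jl,k}\mapsto\widetilde g_{jl,k}$) so that the block of $\tens{A}$ acting on the original unknowns is the rank-one matrix $\tens{B}(\omega,\eta)=-\tfrac12\beta(\omega)\eta^{\rm T}\sum_m\beta_m\tens{G}_m(\omega)$, and uses \eqref{ExpW01} to get $F^{\rm red}(\omega,Y)=-2\ip{\tens{C}(\omega)Y}{Y}\beta(\omega)$ with $\tens{B}(\omega,\tens{H}(\omega)Y)=\beta(\omega)(\tens{C}(\omega)Y)^{\rm T}$. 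Then Lemma~\ref{AiHatsushiba} and Lemma~\ref{RadConst} produce a single radially symmetric datum for which the explicit rank-one exponential formula \eqref{Nec02} yields quantitative growth of $e^{(\ve\log t)\tens{B}}\pa_\sigma U$ — like $t^{c\ve}$ when $\ip{\tens{C}Y^0}{\beta}>0$, and like $\ve\log t$ in the degenerate case $\ip{\tens{C}Y}{\beta}\equiv 0$ — which simultaneously defeats (AFP) (via \eqref{Ruh-A01}) and (AFE) (via Corollary~\ref{GenEnergyEst}, giving $\|u(t)\|_E\gtrsim\ve(1+t)^{C\ve}$ or $\ve^2\log t$). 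In short, the missing idea in your proposal is precisely this use of the rank-one structure \eqref{AA-1}--\eqref{AAB-2} together with the identity $W=\tens{H}\pa_\sigma U$ to produce explicit divergence from a single well-chosen datum, instead of trying to recover the full coefficient-vanishing statement from data restricted to $D$.
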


Theorems \ref{Necessity} and \ref{Necessity02} will be proved in Section \ref{ProofNecessity}.

\section{Preliminaries}
\subsection{Vector fields associated with wave equations}
First we recall some properties of the vector fields contained in $\Gamma$
(remember the notation in Section \ref{notation}).
We have $[S, \dal]=-2\dal$, and $[\Omega_j, \dal]=[L_j, \dal]=[\pa_a, \dal]=0$
for $1\le j\le 3$ and $0\le a\le 3$, where $[A,B]=AB-BA$ for operators $A$ and $B$.
Thus we get
\begin{equation}
\label{CommGamma}
\dal (\Gamma^\alpha \varphi)= 
(\Gamma_0+2)^{\alpha_0}\Gamma_1^{\alpha_1}\cdots \Gamma_{10}^{\alpha_{10}}
(\dal \varphi)
\end{equation}
for any smooth function $\varphi$ and any multi-index $\alpha=(\alpha_0,\alpha_1,\dots,\alpha_{10})$.
Since $[\Gamma_a,\Gamma_b]$ with $0\le a, b\le 10$ can be written in 
terms of $\Gamma$, it follows that
\begin{equation}
\label{MultiGamma}
\Gamma^\alpha\Gamma^\beta=\Gamma^{\alpha+\beta}+\sum_{|\gamma|\le |\alpha|+|\beta|-1} c_{\gamma}^{\alpha, \beta} \Gamma^\gamma
\end{equation}
for any multi-indices $\alpha$ and $\beta$ with appropriate constants $c_\gamma^{\alpha, \beta}$.
Especially $[\Gamma_a, \pa_b]$ for $0\le a\le 10$ and $0\le b\le 3$ can be written 
in terms of $\pa$. Therefore 
for any nonnegative integer $s$, there exists a positive constant $C_s$
such that
\begin{equation}
\label{DerivativeG}
C_s^{-1} |\pa \varphi(t, x)|_s
\le \sum_{|\alpha| \le s}\sum_{a=0}^3 |\pa_a \Gamma^\alpha\varphi(t,x)|
\le C_s |\pa \varphi(t,x)|_s 
\end{equation}
for any sufficiently smooth function $\varphi=\varphi(t,x)$,
where $|\cdot|_s$ is given by \eqref{InvariantNorm}.

The following Sobolev type inequality, known as the Klainerman inequality,
is quite useful in deriving decay estimates for wave equations (see Klainerman~\cite{Kla87} for the proof; see also \cite{Kla85} and \cite{Hoe97}).
\begin{lemma}\label{KlainermanIneq}
There exists a positive constant $C$ such that we have
\begin{equation}
\label{KlainermanInequality}
\sup_{x\in \R^3}\jb{t+|x|}\jb{t-|x|}^{1/2}|\varphi(t,x)|\le C \|\varphi(t,\cdot)\|_2
\end{equation}
for any $C^2$-function $\varphi$, provided that
the right-hand side of \eqref{KlainermanInequality} is finite.
Here $\|\cdot\|_s$ is given by \eqref{InvariantNorm}.
\end{lemma}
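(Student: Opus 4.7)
The plan is to split $\R^3$ into the interior region $I_t := \{x\in \R^3 : |x|\le (1+t)/2\}$ and the exterior region $E_t := \{x\in\R^3 : |x|>(1+t)/2\}$, and to establish the pointwise bound in each region separately. On $I_t$ the weight $\jb{t+|x|}\jb{t-|x|}^{1/2}$ is comparable to $\jb{t}^{3/2}$, whereas on $E_t$ one has $\jb{t+|x|}\sim \jb{|x|}$ while $\jb{t-|x|}^{1/2}$ can be small near the light cone; the exterior case is the technically delicate one.

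For the interior estimate, I would use the identity $t\pa_j = L_j - x_j\pa_t$ together with $|x_j|\le (1+t)/2$ on $I_t$ to derive, by iteration and \eqref{DerivativeG}, the pointwise bound $\jb{t}^{|\alpha|}|\pa^\alpha\varphi(t,x)|\le C|\varphi(t,x)|_{|\alpha|}$ for $x\in I_t$ and $|\alpha|\le 2$. A standard Sobolev embedding $H^2(B)\hookrightarrow L^\infty(B)$ on a ball of radius comparable to $1+t$, rescaled so that the constant is independent of $t$, then gives $\jb{t}^{3/2}|\varphi(t,x)|\le C\|\varphi(t,\cdot)\|_2$ on $I_t$, which is even stronger than what is required.

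For the exterior estimate, I would pass to polar coordinates $x=r\omega$ and first invoke the angular Sobolev embedding $H^2(S^2)\hookrightarrow L^\infty(S^2)$, which, since $\Omega$ spans the tangent bundle of $S^2$, yields
\begin{equation*}
|\varphi(t,r\omega)|^2 \le C\sum_{|\beta|\le 2}\int_{S^2}|\Omega^\beta\varphi(t,r\omega')|^2\, d\omega'.
\end{equation*}
Next, for each $\beta$, I would apply the fundamental theorem of calculus to $F_\beta(s) := s^2\jb{s-t}\int_{S^2}|\Omega^\beta\varphi(t,s\omega')|^2 d\omega'$ in the form $F_\beta(r) = -\int_r^\infty F_\beta'(s)\,ds$; the choice of the weight $\jb{s-t}$ rather than $|s-t|$ is crucial, as it is what ultimately produces the half-power $\jb{r-t}^{1/2}$ without singularities at the light cone. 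Expanding $F_\beta'(s)$, using the change of variables $dy=s^2\,ds\,d\omega'$ to rewrite the resulting radial integrals as integrals over $E_t$, and applying Cauchy-Schwarz would yield
\begin{equation*}
r^2\jb{r-t}|\varphi(t,r\omega)|^2 \le C\sum_{|\beta|\le 2}\|\Omega^\beta\varphi(t,\cdot)\|_{L^2}\bigl(\|\Omega^\beta\varphi(t,\cdot)\|_{L^2}+\|\jb{r-t}\pa_r\Omega^\beta\varphi(t,\cdot)\|_{L^2}\bigr).
\end{equation*}

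The main technical obstacle is then to control the weighted derivative $\jb{r-t}\pa_r$ by first-order $\Gamma$-derivatives uniformly on $E_t$, including across the light cone. The key identities are $(t+r)(\pa_t+\pa_r) = S+\omega\cdot L$ and $(t-r)(\pa_t-\pa_r) = S-\omega\cdot L$: the first gives $\jb{t-r}|(\pa_t+\pa_r)\varphi|\le C|\varphi|_1$ on $E_t$ (since $\jb{t-r}\le \jb{t+r}\sim t+r$), and the second, split into the cases $|t-r|\ge 1$ and $|t-r|\le 1$, gives $\jb{t-r}|(\pa_t-\pa_r)\varphi|\le C|\varphi|_1$. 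Subtracting these yields $\jb{r-t}|\pa_r\varphi|\le C|\varphi|_1$ pointwise on $E_t$, so $\|\jb{r-t}\pa_r\Omega^\beta\varphi(t,\cdot)\|_{L^2(E_t)}\le C\|\varphi(t,\cdot)\|_{|\beta|+1}$; substituting into the previous display and taking square roots produces $r\jb{r-t}^{1/2}|\varphi(t,r\omega)|\le C\|\varphi(t,\cdot)\|_2$ on $E_t$, which together with the interior bound completes the proof.
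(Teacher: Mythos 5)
Your interior argument is essentially fine (note that bounding $x_j\pa_t\varphi$ there requires the identity $tS-rL_r=(t^2-r^2)\pa_t$, i.e.\ the paper's Lemma~\ref{Frame02}, not just $t\pa_j=L_j-x_j\pa_t$), and the paper itself offers no proof to compare with, since it simply cites Klainerman \cite{Kla87}. The genuine problem is in the exterior region: your scheme stacks the full angular embedding $H^2(S^2)\hookrightarrow L^\infty(S^2)$ (two $\Omega$'s) on top of a radial fundamental-theorem-of-calculus step (one more derivative), so the worst term you must control is $\|\jb{r-t}\pa_r\Omega^\beta\varphi\|_{L^2}$ with $|\beta|=2$. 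As you yourself write, this is bounded by $C\|\varphi(t,\cdot)\|_{|\beta|+1}=C\|\varphi(t,\cdot)\|_{3}$, i.e.\ three vector fields; substituting it back gives $r^2\jb{r-t}|\varphi|^2\le C\|\varphi\|_2\|\varphi\|_3$, not $C\|\varphi\|_2^2$. The final sentence, which silently replaces $\|\varphi\|_{|\beta|+1}$ by $\|\varphi\|_2$, is therefore a non sequitur: what you have proved is the inequality with $\|\varphi(t,\cdot)\|_3$ on the right, a strictly weaker statement than Lemma~\ref{KlainermanIneq}. Rearranging the order (radial step first, angular second) or using a bilinear refinement of the sphere embedding does not help, because in every such sequential arrangement some term carries one radial derivative on top of two angular ones. (A minor additional point: the identity $F_\beta(r)=-\int_r^\infty F_\beta'(s)\,ds$ needs a justification that $F_\beta(s)\to 0$ along some sequence; this does follow, since $F_\beta'\in L^1$ forces a limit which must vanish because $s^2\int_{S^2}|\Omega^\beta\varphi|^2\,d\omega'$ is integrable in $s$, but it should be said.)

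The standard way to keep the total count at two is to make the Sobolev embedding three-dimensional and anisotropic rather than splitting sphere and radius: fix $x=r\omega$ with $r\ge(1+t)/2$, set $\delta=\jb{t-r}$, and work on the annulus $A=\{y:\ |\,|y|-r\,|\le c\delta\}$, on which $\jb{t-|y|}\sim\delta$. Rescaling the radial variable by $\delta$ and using $(\rho,\omega')$ with $\rho=(|y|-r)/\delta$ as coordinates on a fixed domain, the $H^2\hookrightarrow L^\infty$ embedding in three variables produces only mixed derivatives $(\delta\pa_{|y|})^i\Omega^\beta\varphi$ with total order $i+|\beta|\le 2$, while the Jacobian $|y|^2\,d|y|\,d\omega'\sim r^2\delta\,d\rho\,d\omega'$ yields exactly the factor $r^2\jb{t-r}\sim\jb{t+r}^2\jb{t-r}$. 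Converting $\delta\pa_{|y|}$ into first-order $\Gamma$'s via Lemma~\ref{Frame02} (legitimate on $A$ because $\jb{t-|y|}\sim\jb{t-r}$ there) costs one vector field per radial derivative, so every term is controlled by $\|\varphi(t,\cdot)\|_2$. With this replacement of your exterior step, the proof closes; as written, it does not establish the lemma as stated.
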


We set $r=|x|$, $\omega=r^{-1}x$, and $\pa_r=\sum_{j=1}^3 (x_j/|x|) \pa_j$. We also define
$L_r=r\pa_t+t\pa_r=\sum_{j=1}^3 (x_j/|x|) L_j$, and $\pa_\pm =\pa_t\pm \pa_r$.
Since we have $S=t\pa_t+r\pa_r$, we obtain
\begin{equation}
\label{blue00}
\pa_+=\frac{1}{t+r}(S+L_r).
\end{equation}
Note that \eqref{blue00} implies $\pa_+=(1+t+r)^{-1}(S+L_r+\pa_t+\pa_r)$.
Since $\pa_t=(\pa_-+\pa_+)/2$ and $\pa_r=(-\pa_-+\pa_+)/2$, 
we obtain from \eqref{blue00} that
\begin{equation}
\label{blue01}
\Bigl|\Bigl(\pa_t-\frac{1}{2}\pa_-\Bigr)\varphi(t,x)\Bigr|
+\Bigl|\Bigl(\pa_r+\frac{1}{2}\pa_-\Bigr)\varphi(t,x)\Bigr|\le C\jb{t+r}^{-1}|\varphi(t,x)|_1
\end{equation}
for any smooth function $\varphi$.
Since 
\begin{equation}
\label{ExpressNabla}
\nabla_x=\omega \pa_r- r^{-1}\omega\times \Omega
=\omega \pa_r-t^{-1}\omega\times(\omega\times L),
\end{equation}
we get
\begin{equation}
\label{blue02}
|\left(\pa_j-\omega_j\pa_r\right) \varphi (t,x)| \le C\jb{t+r}^{-1} |\varphi(t,x)|_1, \quad 
j=1,2,3.
\end{equation}
Now \eqref{blue01} and \eqref{blue02} lead to the following estimate \eqref{blue03} and its immediate consequence
\eqref{blue03'}:
\begin{lemma}\label{frame01}
There exists a positive constant $C$ such that we have
\begin{align}
\label{blue03}
& \sum_{a=0}^3 \Bigl|
                      \Bigl(
                        \pa_a-\frac{-\omega_a}{2}\pa_-
                      \Bigr)\varphi(t,x)
                    \Bigr|\le C\jb{t+r}^{-1} |\varphi(t,x)|_1, \\
\label{blue03'}
& \sum_{a=0}^3 \Bigl|
                      r\left(
                         \pa_a \varphi
                       \right)(t,x)-\frac{-\omega_a}{2}\pa_-
                      \bigl(r\varphi(t,x) \bigr)
                       \Bigr|\le C|\varphi(t,x)|_1 
\end{align}
for any smooth function $\varphi=\varphi(t,x)$, where
$\omega=(\omega_1, \omega_2, \omega_3)=|x|^{-1} x$, and $r=|x|$.
\end{lemma}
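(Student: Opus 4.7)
The plan is to derive both estimates directly from the identities already established just before the lemma: \eqref{blue00}, \eqref{blue01}, and \eqref{blue02}. First I would address \eqref{blue03}. For the time component $a=0$, the convention $\omega_0=-1$ makes $-\omega_0/2=1/2$, so $(\pa_0-(-\omega_0/2)\pa_-)\varphi$ is literally $(\pa_t-(1/2)\pa_-)\varphi$, and the bound is the first term of \eqref{blue01}. For the spatial components $a=j\in\{1,2,3\}$ I would write the decomposition
\[
\pa_j+\frac{\omega_j}{2}\pa_- = (\pa_j-\omega_j\pa_r)+\omega_j\Bigl(\pa_r+\frac{1}{2}\pa_-\Bigr),
\]
bound the first bracket by \eqref{blue02}, and bound the second bracket by noting $\pa_r+\pa_-/2=\pa_+/2$ and using \eqref{blue00} (equivalently, the second term of \eqref{blue01}). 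Summing over $a$ then gives \eqref{blue03}.

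For \eqref{blue03'} I would pass to the $r$-weighted form by Leibniz. Away from the origin $\pa_- r=\pa_t r-\pa_r r=-1$, so
\[
\pa_-(r\varphi)=r\pa_-\varphi-\varphi,
\]
which gives the algebraic identity
\[
r(\pa_a\varphi)-\frac{-\omega_a}{2}\pa_-(r\varphi)=r\Bigl(\pa_a-\frac{-\omega_a}{2}\pa_-\Bigr)\varphi+\frac{-\omega_a}{2}\varphi.
\]
Applying \eqref{blue03} to the first summand yields $Cr\jb{t+r}^{-1}|\varphi|_1\le C|\varphi|_1$, since $r\le\jb{t+r}$, and the second summand is trivially bounded by $C|\varphi|\le C|\varphi|_1$. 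Summing over $a$ produces \eqref{blue03'}. The apparent singularity of $\pa_- r$ at $x=0$ is harmless, because both $r(\pa_a\varphi)$ and $\pa_-(r\varphi)$ extend continuously up to the origin, so the inequality extends from $\{x\neq 0\}$ to all of $\R^3$ by continuity.

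There is no real obstacle here; the lemma is a bookkeeping consequence of the null-frame identities already assembled in the preceding paragraph. The only places to be careful are the consistent handling of the sign convention $\omega_0=-1$, so that \eqref{blue03} reduces to the correct combinations for $a=0$ and $a=j$, and the elementary estimate $r\le\jb{t+r}$ which converts the $\jb{t+r}^{-1}$ gain in \eqref{blue03} into a uniform bound in \eqref{blue03'}.
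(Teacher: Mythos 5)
Your proof is correct and is essentially the paper's own argument: the paper states that \eqref{blue01} and \eqref{blue02} lead to \eqref{blue03}, with \eqref{blue03'} as an immediate consequence, and your decomposition for $a=j$ together with the Leibniz identity $\pa_-(r\varphi)=r\pa_-\varphi-\varphi$ and $r\le\jb{t+r}$ is exactly the spelled-out version of that. The sign bookkeeping with $\omega_0=-1$ is handled correctly.
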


Recalling the definition of the null condition \eqref{NC00}, 
we obtain from Lemma~\ref{frame01} the following estimate for the null forms
given by \eqref{QT01} and \eqref{QT02}
(see Klainerman \cite{Kla86} for the details of the proof).
\begin{lemma}
\label{NullNull}
Let $Q$ be one of the null forms $Q_0$ and $Q_{ab}$ with $0\le a<b\le 3$.
Then, for any nonnegative integer $s$, there exists a positive constant $C_s$
such that
$$
|Q(u_k, u_l)|_s\le C_s \jb{t+|x|}^{-1}\bigl(|u|_{[s/2]+1}|\pa u|_s+|\pa u|_{[s/2]}|u|_{s+1}\bigr),
\quad 1\le k, l\le N
$$
at $(t,x)\in (0,\infty) \times \R^3$ for any smooth function $u=u(t,x)=\left(u_j(t,x)\right)_{1\le j\le N}^{\rm T}$,
where $[m]$ denotes the largest integer not exceeding the number $m$.
\end{lemma}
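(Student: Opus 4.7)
The strategy is a two-step bootstrap: first establish the case $s=0$ using the cancellation of null forms along $\pa_-$, then lift to general $s$ by showing that null forms are preserved (up to linear combinations of null forms) under the action of the vector fields in $\Gamma$.

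For the base case $s=0$, I would apply Lemma~\ref{frame01} to write, for each $a=0,\ldots,3$ and each component $u_k$,
\begin{equation*}
\pa_a u_k = -\frac{\omega_a}{2}\pa_- u_k + R_a[u_k], \qquad |R_a[u_k](t,x)| \le C\jb{t+r}^{-1}|u_k(t,x)|_1.
\end{equation*}
Substituting into $Q_0(u_k,u_l)$ produces a principal term proportional to $(1-|\omega|^2)(\pa_- u_k)(\pa_- u_l)$, which vanishes identically since $|\omega|=1$, while substituting into $Q_{ab}(u_k,u_l)$ produces a principal term proportional to $(\omega_a\omega_b - \omega_b\omega_a)(\pa_- u_k)(\pa_- u_l) = 0$. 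What remains is a sum of terms each carrying at least one factor $R_a$, and these contribute the required $\jb{t+r}^{-1}$ prefactor times a product of $|u|_1$ with $|\pa u|$ (plus the symmetric counterpart).

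To pass from $s=0$ to general $s$, the key algebraic claim is that for any null form $Q \in \{Q_0, Q_{ab}\}$ and any multi-index $\alpha$,
\begin{equation*}
\Gamma^\alpha Q(u_k, u_l) = \sum_{|\beta|+|\gamma|\le |\alpha|} c_{\beta\gamma}^\alpha\, Q_{\beta\gamma}(\Gamma^\beta u_k, \Gamma^\gamma u_l),
\end{equation*}
where each $Q_{\beta\gamma}$ is again a null form of type $Q_0$ or $Q_{a'b'}$. This reduces to checking each generator: for $\pa_c$ it is the ordinary Leibniz rule (since $[\pa_c,\pa_a]=0$); for $S$, $L_j$, $\Omega_j$ one uses the commutation relations $[\Gamma_i,\pa_a]=\sum_b\eta^{ib}_a \pa_b$ together with the fact that $Q_0$ and $Q_{ab}$ span a subspace of bilinear forms closed under replacing $\pa_a,\pa_b$ by such combinations. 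I would prove this by induction on $|\alpha|$, which is standard (cf.~\cite{Kla86}).

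Combining the two steps, the $s=0$ estimate applied to each $Q_{\beta\gamma}(\Gamma^\beta u_k, \Gamma^\gamma u_l)$ yields a bound
\begin{equation*}
|Q_{\beta\gamma}(\Gamma^\beta u_k, \Gamma^\gamma u_l)| \le C\jb{t+r}^{-1}\bigl(|\Gamma^\beta u_k|_1|\pa \Gamma^\gamma u_l| + |\pa \Gamma^\beta u_k||\Gamma^\gamma u_l|_1\bigr).
\end{equation*}
Splitting the sum over $|\beta|+|\gamma|\le s$ into pieces with $|\beta|\le[s/2]$ or $|\gamma|\le[s/2]$, and using \eqref{DerivativeG} to absorb the auxiliary derivative into the $|\pa u|$-norms, gives precisely the claimed bound. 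The main obstacle I expect is the clean verification of the commutation identity above for $S$, $L_j$, $\Omega_j$: this is a purely algebraic fact about null forms, and while it is well known, writing it out requires keeping track of how $\Gamma$-commutators change $\omega_a$-coefficients, so that the null structure is preserved after each application.
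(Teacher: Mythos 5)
Your proposal is correct and follows essentially the route the paper itself indicates: the pointwise cancellation from Lemma~\ref{frame01} (so that the $\frac{-\omega_a}{2}\pa_-$ substitution kills the leading bilinear term for $Q_0$ and $Q_{ab}$), combined with the standard Klainerman fact that $\Gamma$ applied to a null form reproduces linear combinations of null forms of $\Gamma^\beta u_k$, $\Gamma^\gamma u_l$, followed by the $[s/2]$ splitting and \eqref{DerivativeG}. The only detail worth recording is that the pure-remainder cross terms $R_a[u_k]R_b[u_l]$ should be estimated by bounding one factor by $C\jb{t+|x|}^{-1}|u|_1$ and the other by $C|\pa u|$ (which holds since each $R_a$ is a combination of first derivatives), and then the argument closes exactly as you describe.
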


Since $tS-rL_r=(t^2-r^2)\pa_t$ and $tL_r-rS=(t^2-r^2)\pa_r$,
we get
\begin{equation}
\label{blue04}
\jb{t-r}\left(|\pa_t \varphi(t,x)|+|\pa_r \varphi(t,x)|\right)\le C |\varphi(t,x)|_1.
\end{equation}
Equations \eqref{blue02} and \eqref{blue04} yield the following (see also Lindblad~\cite{Lin90}).
\begin{lemma}\label{Frame02}
There exists a positive constant $C$ such that
$$
\jb{t-r}|\pa \varphi(t,x)|\le C |\varphi(t,x)|_1
$$
holds for any smooth function $\varphi=\varphi(t,x)$.
\end{lemma}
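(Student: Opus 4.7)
The plan is to derive Lemma~\ref{Frame02} directly from the two ingredients the paper has just established, namely \eqref{blue02} and \eqref{blue04}, by upgrading the radial derivative bound to a bound on the full spatial gradient and then absorbing the factor $1$ in $\jb{t-r}$ via the trivial estimate.

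First I would observe that the full strength of \eqref{blue04}, i.e.\ $\jb{t-r}(|\pa_t\varphi|+|\pa_r\varphi|)\le C|\varphi|_1$, is already available. Recall it is obtained by writing $(t^2-r^2)\pa_t=tS-rL_r$ and $(t^2-r^2)\pa_r=tL_r-rS$, dividing by $t+r$, and using $t/(t+r),\ r/(t+r)\le 1$ to get $|t-r|(|\pa_t\varphi|+|\pa_r\varphi|)\le |S\varphi|+|L_r\varphi|\le C|\varphi|_1$; combining with the trivial bound $|\pa_t\varphi|+|\pa_r\varphi|\le C|\varphi|_1$ (since $\pa_t\in\Gamma$ and $L_r=\sum(x_j/r)L_j$ yields $\pa_r$ after rewriting via $\pa_r=\sum \omega_j\pa_j$) promotes the factor $|t-r|$ to $\jb{t-r}$, using $\jb{t-r}\le 1+|t-r|$. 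This handles the $\pa_t$-component in the conclusion of Lemma~\ref{Frame02}, and gives the $\pa_r$-component as well.

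Next, for each spatial derivative $\pa_j$ with $j=1,2,3$, I would apply \eqref{blue02}, which gives $|\pa_j\varphi-\omega_j\pa_r\varphi|\le C\jb{t+r}^{-1}|\varphi|_1$ whenever $r>0$. Multiplying by $\jb{t-r}$ and using the elementary inequality $\jb{t-r}\le\jb{t+r}$ valid for $t,r\ge 0$ (since $|t-r|\le t+r$ forces $1+(t-r)^2\le 1+(t+r)^2$), I obtain
\begin{equation*}
\jb{t-r}|\pa_j\varphi-\omega_j\pa_r\varphi|\le C|\varphi|_1.
\end{equation*}
Combining this with the already-proven $\jb{t-r}|\pa_r\varphi|\le C|\varphi|_1$ and $|\omega_j|\le 1$ yields $\jb{t-r}|\pa_j\varphi|\le C|\varphi|_1$ for $j=1,2,3$, which together with the $\pa_0$-estimate completes the bound on $|\pa\varphi|=\sum_{a=0}^3|\pa_a\varphi|$.

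The only mild subtlety is the case $x=0$, where $\omega$ is undefined and \eqref{blue02} does not directly apply. At such points, however, $L_j=t\pa_j+x_j\pa_t$ reduces to $t\pa_j$, so for $t>0$ one has $|\pa_j\varphi(t,0)|=|L_j\varphi(t,0)|/t$, which together with the trivial bound handles $t$ both large and small; hence $\jb{t}|\pa\varphi(t,0)|\le C|\varphi(t,0)|_1$ by the same trick of interpolating between the trivial estimate and the gain from the vector fields. I do not expect any real obstacle here: the proof is just a careful bookkeeping of which vector field supplies which decay factor, with the key observation being the inequality $\jb{t-r}\le\jb{t+r}$ in the light cone region $t,r\ge 0$, which allows the $\jb{t+r}^{-1}$ gain in \eqref{blue02} to be converted into the desired $\jb{t-r}$ weight.
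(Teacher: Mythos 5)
Your proposal is correct and takes essentially the same route as the paper, which dismisses the proof with the remark that \eqref{blue02} and \eqref{blue04} yield the lemma: you simply make explicit the bookkeeping — using $\jb{t-r}\le\jb{t+r}$ to convert the $\jb{t+r}^{-1}$ gain in \eqref{blue02} into the $\jb{t-r}$ weight, and the trivial bound $|\pa\varphi|\le|\varphi|_1$ to pass from $|t-r|$ to $\jb{t-r}$ — together with the harmless $x=0$ case.
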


For $R>0$, we define
\begin{equation}
\label{DefBall}
B_R:=\{x\in \R^3; |x|\le R\}.
\end{equation}
The following Hardy type inequality
is due to Lindblad \cite{Lin90}.
\begin{lemma}\label{LindbladIneq}
Let $R>0$ be given. 
Then there exists a positive constant $C_R$,
which depends on $R$, such that
$$
\left(\int_{\R^3} \frac{|\varphi(t,x)|^2}{\jb{t-|x|}^2} dx \right)^{1/2}\le C_R \|\pa \varphi(t,\cdot)\|_{L^2(\R^3)}, \quad t\ge 0
$$
holds for any smooth function $\varphi=\varphi(t,x)$ satisfying 
$$
 \supp \varphi(t,\cdot) \subset B_{t+R}, \quad t\ge 0.
$$
\end{lemma}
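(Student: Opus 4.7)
Passing to polar coordinates $x = r\omega$, it suffices to prove, for each $\omega \in S^2$, the one-dimensional estimate
$$
\int_0^{t+R} \frac{r^2 \phi(r)^2}{\jb{t-r}^2}\, dr \le C_R \int_0^{t+R} r^2 \phi'(r)^2\, dr
$$
with $C_R$ uniform in $t \ge 0$, where $\phi(r) := \varphi(t, r\omega)$ vanishes for $r \ge t+R$. The case $t \le 2R$ is immediate, since the support then lies in the fixed ball $B_{3R}$ and combining the Poincar\'e inequality there with the trivial bound $\jb{t-r}^{-2} \le 1$ gives the estimate; so I assume $t \ge 2R$ and split the radial integral into an inner region $r \in [0, t/2]$ and an outer region $r \in [t/2, t+R]$.

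In the inner region, $r \le t/2$ forces $\jb{t-r}^2 \ge (t/2)^2 \ge r^2$, so after integration in $\omega$ the contribution is dominated by $\int_{\R^3} \varphi^2/|x|^2\, dx$ and hence by $4\|\nabla \varphi\|_{L^2}^2$ via the classical three-dimensional Hardy inequality. For the outer region I would perform the reflection $u = t+R-r \in [0, t/2+R]$ and introduce the auxiliary function $\Phi(u) := r\phi(r) = (t+R-u)\phi(t+R-u)$, which vanishes at $u = 0$ by the support condition. The elementary identity $(R^2+1)\jb{u-R}^2 - u^2 = (Ru - (R^2+1))^2 \ge 0$ furnishes the pointwise comparison $u^2 \le (R^2+1)\jb{u-R}^2$, and combined with the one-dimensional Hardy inequality $\int_0^L f^2/u^2\, du \le 4\int_0^L (f')^2\, du$ applied to $f = \Phi$ this yields
$$
\int_0^{t/2+R} \frac{\Phi(u)^2}{\jb{u-R}^2}\, du \le 4(R^2+1) \int_0^{t/2+R} \Phi'(u)^2\, du.
$$

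Expanding $\Phi'(u) = -\phi(t+R-u) - (t+R-u)\phi'(t+R-u)$ and estimating $\Phi'^2 \le 2\phi^2 + 2(t+R-u)^2 \phi'^2$, the $\phi'$-piece integrates back to $\int_{t/2}^{t+R} r^2 \phi'^2\, dr$ directly, while the $\phi$-piece is absorbed using the one-sided Poincar\'e estimate $\int_0^L \phi^2\, du \le (L^2/2) \int_0^L \phi'^2\, du$ together with the lower bound $r \ge t/2$, which converts $\phi'^2$ into $4 r^2 \phi'^2/t^2$. The point that closes the estimate with a constant depending only on $R$ is the observation that, for $t \ge 2R$, one has $(t/2+R)/t \le 1$, so every geometric factor collapses to a pure function of $R$. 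The main obstacle is precisely this uniform $t$-independence in the outer region: a naive Poincar\'e across the whole annulus $\{t/2 \le r \le t+R\}$, whose width is comparable to $t$, would introduce a spurious $t$-factor, and the reflection device above is what allows the dimension-free one-dimensional Hardy constant to absorb that large length.
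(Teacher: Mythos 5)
Your argument is correct, and every step checks out: the algebraic identity $(R^2+1)\jb{u-R}^2-u^2=(Ru-(R^2+1))^2$ is right, the one-dimensional Hardy inequality with constant $4$ is legitimately applied on the finite interval because $\Phi(0)=(t+R)\varphi(t,(t+R)\omega)=0$ by the support hypothesis, the inner region $r\le t/2$ is correctly reduced to the three-dimensional Hardy inequality via $\jb{t-r}^2\ge (t/2)^2\ge r^2$, the small-time case $t\le 2R$ is trivial as you say, and the factor $(t/2+R)/t\le 1$ for $t\ge 2R$ indeed makes all constants depend on $R$ alone; since you only use $\|\nabla_x\varphi\|_{L^2}\le\|\pa\varphi(t,\cdot)\|_{L^2}$, the stated bound follows. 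Note, however, that the paper itself gives no proof of this lemma: it is quoted as a known Hardy-type inequality of Lindblad (the 1990 paper cited as \cite{Lin90}), where the original argument is in the same spirit but organized differently — essentially a direct integration by parts in the radial variable against a primitive of the weight $\jb{t-r}^{-2}$, with the boundary term at $r=t+R$ killed by the support condition, rather than your decomposition into $r\le t/2$ and $r\ge t/2$ with the reflection $u=t+R-r$. Your version is a perfectly good self-contained substitute; its only cost is the slightly longer bookkeeping (two regions, two Hardy inequalities, a Poincar\'e step), while the classical one-pass integration by parts yields the estimate in a single stroke. Your closing remark about why a naive Poincar\'e inequality over the annulus of width comparable to $t$ would fail is exactly the right point to flag.
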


Now we introduce
\begin{equation}
\label{AlinhacField}
Z=(Z_1, Z_2, Z_3):=\omega \pa_t+\nabla_x=(\omega_j \pa_t+\pa_j)_{1\le j\le 3},
\end{equation}
where $\omega=(\omega_1,\omega_2,\omega_3)=|x|^{-1}x\in S^2$.
For a nonnegative integer $s$ and a smooth function $\varphi=\varphi(t,x)$, we define
\begin{equation}
|\varphi(t,x)|_{Z, s}=\sum_{j=1}^3 \sum_{|\alpha|\le s} \left|
Z_j\Gamma^\alpha \varphi(t,x)
\right|.
\label{InvariantAlinhac}
\end{equation}
Then we obtain the following inequality.
\begin{lemma}
\label{frame03}
For a nonnegative integer $s$, there exists a positive constant $C_s$ such that
$$
|\Gamma \varphi(t,x)|_s\le C_s\left(r|\varphi(t,x)|_{Z,s}+\jb{t-r}|\pa \varphi(t,x)|_s\right)
$$
for any smooth function $\varphi=\varphi(t,x)$, where $\Gamma\varphi=(\Gamma_a\varphi)_{0\le a\le 10}$.
\end{lemma}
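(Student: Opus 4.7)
The plan is to prove the bound by induction on $s$, after recording a few algebraic identities that express each of the Lorentz/scaling fields $S, L_j, \Omega_j$ as a linear combination of the $Z_j$'s and of the ordinary partial derivatives, with coefficients of the right size.

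A direct calculation using $Z_j=\omega_j\pa_t+\pa_j$ and $\omega_j=x_j/r$ (so in particular $\sum_{k=1}^3 x_k\omega_k=r$ and $x_j\omega_k-x_k\omega_j=0$) yields the identities
\begin{align*}
L_j &= rZ_j+(t-r)\pa_j,\qquad j=1,2,3,\\
S &= \sum_{k=1}^{3} x_k Z_k + (t-r)\pa_t,\\
\Omega_1 &= x_2 Z_3-x_3 Z_2,\quad \Omega_2=x_3 Z_1-x_1 Z_3,\quad \Omega_3=x_1 Z_2-x_2 Z_1.
\end{align*}
In every case the coefficient in front of a $Z_k$ is bounded by $r$ and the coefficient in front of a $\pa_b$ is bounded by $\jb{t-r}$, while for the remaining components $\Gamma_a\in \pa$ of $\Gamma$ we have the trivial bound $|\Gamma_a\varphi|\le |\pa \varphi|_0$. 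Taken together these observations already settle the base case $s=0$.

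For the inductive step I use \eqref{MultiGamma} to reduce the claim to $\sum_{1\le |\beta|\le s+1}|\Gamma^\beta\varphi|\le C_s(r|\varphi|_{Z,s}+\jb{t-r}|\pa\varphi|_s)$. I fix $\beta$ with $|\beta|=s+1$. If $\beta$ involves only partial-derivative indices, then $|\Gamma^\beta \varphi|\le C|\pa\varphi|_s$ by \eqref{DerivativeG}. Otherwise I select some $a\in\{0,\ldots,6\}$ with $\beta_a>0$ and decompose, again via \eqref{MultiGamma},
$$
\Gamma^\beta\varphi=\Gamma_a\Gamma^{\beta-e_a}\varphi+\sum_{|\gamma|\le s}c_\gamma^\beta\,\Gamma^\gamma\varphi.
$$
The remainder sum is dominated by $|\Gamma\varphi|_{s-1}$, which the induction hypothesis bounds by $C(r|\varphi|_{Z,s-1}+\jb{t-r}|\pa\varphi|_{s-1})\le C(r|\varphi|_{Z,s}+\jb{t-r}|\pa\varphi|_s)$. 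For the main term, applying the algebraic identity for $\Gamma_a$ to $\Gamma^{\beta-e_a}\varphi$ gives
$$
|\Gamma_a\Gamma^{\beta-e_a}\varphi|\le Cr\sum_{k=1}^{3}|Z_k\Gamma^{\beta-e_a}\varphi|+C\jb{t-r}\sum_{b=0}^{3}|\pa_b\Gamma^{\beta-e_a}\varphi|.
$$
Since $|\beta-e_a|=s$, the first sum is bounded by $r|\varphi|_{Z,s}$ directly from the definition \eqref{InvariantAlinhac}, and the second is bounded by $C\jb{t-r}|\pa\varphi|_s$ via \eqref{DerivativeG} (after commuting each $\pa_b$ past $\Gamma^{\beta-e_a}$, the commutators themselves being controlled by lower-order $|\pa\varphi|$-norms).

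There is no real analytic obstacle in the proof; the argument is essentially bookkeeping. The crucial point to verify — and what makes the induction close cleanly at every step — is that each of $S, L_j, \Omega_j$ is representable \emph{purely} as a combination of the $Z_k$'s and the $\pa_b$'s, with no leftover Lorentz/scaling field, and that the sizes of the coefficients match exactly the two weights $r$ and $\jb{t-r}$ that are permitted on the right-hand side of the claimed inequality.
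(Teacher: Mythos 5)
Your proof is correct and rests on exactly the same identities as the paper's own argument, namely $S=x\cdot Z+(t-r)\pa_t$, $L=rZ+(t-r)\nabla_x$, $\Omega=x\times Z$, combined with \eqref{DerivativeG}. The paper simply commutes once via \eqref{MultiGamma} to write $|\Gamma\varphi|_s\le C\sum_a\sum_{|\beta|\le s}|\Gamma_a\Gamma^\beta\varphi|$ and then applies these identities, so your induction on $s$ is the same approach with more explicit bookkeeping.
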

\begin{proof}
It is easy to check that
$$
S=x\cdot Z+(t-r)\pa_t,\ L=rZ+(t-r)\nabla_x,\ \Omega=x\times Z.
$$
Hence we get
$$
|\Gamma \varphi|_s\le C\sum_{a=0}^{10}\sum_{|\beta|\le s}|\Gamma_a\Gamma^\beta\varphi|
\le Cr|\varphi|_{Z,s}+C\jb{t-r}\sum_{a=0}^3\sum_{|\beta|\le s} |\pa_a \Gamma^\beta\varphi|, 
$$
which implies the desired result because of \eqref{DerivativeG}.
\end{proof}
By \eqref{ExpressNabla} and \eqref{blue00}, we have
$$
Z=\omega \pa_+-r^{-1}\omega\times \Omega=\omega(t+r)^{-1}(S+\omega\cdot L)-r^{-1}\omega\times \Omega.
$$
By direct calculations, we have
$$
[\pa_t, Z_j]=0,\ [\pa_k, Z_j]=r^{-1}(\delta_{jk}-\omega_j\omega_k)\pa_t
$$
for $1\le j, k\le 3$, where $\delta_{jk}$ is the Kronecker delta.
For a nonnegative integer $s$, there is a positive constant $C_s$ such that
\begin{align*}
& \sum_{j=1}^3\left(|(t+r)^{-1}\omega_j|_s+|r^{-1}\omega_j|_s\right)\\
& \qquad {}+\sum_{j,k=1}^3
\left(|(t+r)^{-1}\omega_j\omega_k|_s+|r^{-1}(\delta_{jk}-\omega_j\omega_k)|_s\right)\le C_s\jb{t+r}^{-1}
\end{align*}
when $r\ge t/2\ge 1$.
Thus we obtain the following estimate.
\begin{lemma}\label{CommZG}
For any positive integer $s$, there exists a positive constant $C_s$
such that
\begin{align}
\sum_{j=1}^3 
|Z_j \varphi(t,x)|_s
 \le & C_s\jb{t+r}^{-1}|\varphi(t,x)|_{s+1}, 
\label{CommRZG03}\\
\sum_{a=0}^3\sum_{j=1}^3 
|\pa_a Z_j \varphi(t,x)|_s
 \le & C_s\jb{t+r}^{-1}|\pa \varphi(t,x)|_{s+1},
 \label{CommZG03}
\end{align}
for any smooth function $\varphi$ and any $(t,x)$ with $r\ge t/2\ge 1$.
\end{lemma}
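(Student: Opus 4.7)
The plan is to exploit the identity
$$Z_j = \frac{\omega_j}{t+r}(S+\omega\cdot L) - \frac{(\omega\times\Omega)_j}{r}$$
derived immediately before the lemma. This expresses each $Z_j$ as a linear combination of vector fields from the family $\Gamma$ weighted by smooth coefficients $\psi$ of the form $(t+r)^{-1}\omega_j\omega_k$ or $r^{-1}\omega_k$. Crucially, the coefficient norm bound stated just before the lemma guarantees $|\psi(t,x)|_s \le C_s\jb{t+r}^{-1}$ whenever $r\ge t/2\ge 1$, so the representation is genuinely a gain of one power of $\jb{t+r}^{-1}$ at the cost of one extra order of $\Gamma$-differentiation.

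To establish \eqref{CommRZG03}, I would apply $\Gamma^\alpha$ with $|\alpha|\le s$ to the expansion $Z_j\varphi = \sum_b \psi_{j,b}(t,x)\, \Gamma_b\varphi$. Since each $\Gamma_c$ is a derivation and, by \eqref{MultiGamma}, every commutator $[\Gamma_c,\Gamma_{c'}]$ is expressible in terms of $\Gamma$, an induction on $|\alpha|$ yields the Leibniz-type expansion
$$\Gamma^\alpha(fg)=\sum_{|\beta|+|\gamma|\le |\alpha|} C^\alpha_{\beta\gamma}(\Gamma^\beta f)(\Gamma^\gamma g).$$
Taking $f=\psi_{j,b}$ and $g=\Gamma_b\varphi$, bounding $|\Gamma^\beta\psi_{j,b}|\le C_\beta \jb{t+r}^{-1}$ by the coefficient estimate, and bounding $|\Gamma^\gamma\Gamma_b\varphi|\le C|\varphi|_{|\gamma|+1}$ by a second application of \eqref{MultiGamma}, gives \eqref{CommRZG03}.

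For \eqref{CommZG03} I would use the commutator relations $[\pa_t,Z_j]=0$ and $[\pa_k,Z_j]=r^{-1}(\delta_{jk}-\omega_j\omega_k)\pa_t$ recorded just before the lemma. Writing $\pa_a Z_j\varphi = Z_j(\pa_a\varphi) + [\pa_a,Z_j]\varphi$, the first piece is handled by \eqref{CommRZG03} applied to $\pa_a\varphi$, which together with \eqref{DerivativeG} yields $C_s\jb{t+r}^{-1}|\pa\varphi|_{s+1}$. For $a=k\ge 1$, the commutator piece is a coefficient of $|\cdot|_s$-norm $\lesssim\jb{t+r}^{-1}$ times $\pa_t\varphi$, which the same Leibniz argument controls by $C_s\jb{t+r}^{-1}|\pa\varphi|_s$; this is dominated by the right-hand side of \eqref{CommZG03}.

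The only substantive point is verifying that every coefficient function produced by the Leibniz expansion (including those arising from the lower-order remainder terms in \eqref{MultiGamma}) still satisfies $|\cdot|_s \le C_s\jb{t+r}^{-1}$ throughout the region $r\ge t/2\ge 1$. This is precisely the content of the pointwise bound displayed just before Lemma~\ref{CommZG}, so no real obstacle is anticipated; the argument is essentially bookkeeping on top of that coefficient estimate.
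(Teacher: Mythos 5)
Your proposal is correct and follows essentially the same route the paper intends: the representation $Z=\omega(t+r)^{-1}(S+\omega\cdot L)-r^{-1}\omega\times\Omega$ together with the coefficient bound for $r\ge t/2\ge 1$ gives \eqref{CommRZG03}, and the commutators $[\pa_t,Z_j]=0$, $[\pa_k,Z_j]=r^{-1}(\delta_{jk}-\omega_j\omega_k)\pa_t$ combined with \eqref{CommRZG03} and \eqref{DerivativeG} give \eqref{CommZG03}. The Leibniz-type bookkeeping via \eqref{MultiGamma} is exactly the routine step the paper leaves implicit, and your treatment of it is sound.
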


\subsection{Basic estimates for wave equations}
Here we give two basic estimates for the solution to
the inhomogeneous wave equation
\begin{equation}
\label{LinWave}
\dal u(t,x)=F(t,x), \quad (t,x)\in (0,\infty)\times \R^3
\end{equation}
with initial data
\begin{equation}
\label{LinData}
u(0,x)=\varphi(x),\ (\pa_t u)(0,x)=\psi(x),
\quad x\in \R^3.
\end{equation}
We suppose that $\varphi$, $\psi$ and $F$ are 
smooth functions.

The following improved energy inequality is due to Alinhac \cite{Ali04} 
and Lindblad-Rodnianski \cite{Lin-Rod05} (see also Alinhac \cite{Ali03} and \cite{Ali06}). 
\begin{lemma}\label{AlinhacGhost}
Let $u$ be the solution to \eqref{LinWave}-\eqref{LinData}.
Then, for $\mu\ge 0$ and $\rho>0$, there exists a positive constant $C=C(\rho)$ such that
\begin{align*}
& \jb{t}^{-\mu}\|\pa u(t,\cdot)\|_{L^2}+\left(  
 \int_0^t \int_{\R^3}
 \frac{|Z u(\tau, x)|^2}{\jb{\tau}^{2\mu}\jb{\tau-|x|}^{1+\rho}} dx d\tau \right)^{1/2}\\
& \qquad\qquad \le C \left(\left\|\nabla_x\varphi\right\|_{L^2}+\left\|\psi
\right\|_{L^2}
{}+\int_0^t \jb{\tau}^{-\mu}\|F(\tau, \cdot)\|_{L^2}d\tau\right)
\end{align*}
for $t\ge 0$, where $Z$ is given by \eqref{AlinhacField}.
\end{lemma}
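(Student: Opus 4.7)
The plan is to carry out Alinhac's ghost weight argument, adapted to accommodate the polynomial weight $\langle t\rangle^{-\mu}$. First I would choose a bounded weight $q=q(\sigma)$ by setting
\[
q(\sigma):=\int_{-\infty}^{\sigma}\langle s\rangle^{-1-\rho}\,ds,
\]
so that $q$ is nondecreasing, $q'(\sigma)=\langle\sigma\rangle^{-1-\rho}$, and $q$ is bounded on $\R$ because $\rho>0$; in particular $e^{q(r-t)}$ is pinched between two positive constants. The multiplier will be $\langle t\rangle^{-2\mu}e^{q(r-|x|)}\,\pa_t u$.

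Next I would compute the divergence identity obtained by multiplying the equation $\dal u=F$ by this multiplier. Using $\pa_t e^{q(r-t)}=-q'(r-t)e^{q(r-t)}$ and $\nabla_x e^{q(r-t)}=q'(r-t)\omega\, e^{q(r-t)}$ (with $\omega=x/|x|$), standard integration by parts yields
\begin{align*}
\langle t\rangle^{-2\mu}e^{q}(\dal u)(\pa_t u)
={}&\pa_t\!\left(\tfrac{1}{2}\langle t\rangle^{-2\mu}e^{q}\bigl((\pa_tu)^2+|\nabla_x u|^2\bigr)\right)
-\nabla_x\!\cdot\!\bigl(\langle t\rangle^{-2\mu}e^{q}(\pa_t u)\nabla_x u\bigr)\\
&{}-\tfrac{1}{2}\bigl((\pa_tu)^2+|\nabla_xu|^2\bigr)\bigl(\langle t\rangle^{-2\mu}\bigr)'e^{q}
+\tfrac{1}{2}\langle t\rangle^{-2\mu}q'(r-t)e^{q}\,|Z u|^2,
\end{align*}
where the last term uses the algebraic identity $|Zu|^2=\sum_{j=1}^{3}(\omega_j\pa_tu+\pa_ju)^2=(\pa_tu)^2+|\nabla_xu|^2+2(\pa_ru)(\pa_tu)$. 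Since $\mu\ge 0$, one has $\bigl(\langle t\rangle^{-2\mu}\bigr)'\le 0$, so the first spacetime term has the favorable sign, and $q'\ge 0$ makes the ghost term $|Zu|^2$-term nonnegative. Integrating over $[0,t]\times\R^3$ and discarding these nonnegative contributions that sit on the left, while using $e^{q}\sim 1$, gives
\[
\langle t\rangle^{-2\mu}\|\pa u(t,\cdot)\|_{L^2}^2
+\int_0^t\!\!\int_{\R^3}\frac{|Zu(\tau,x)|^2}{\langle \tau\rangle^{2\mu}\langle \tau-|x|\rangle^{1+\rho}}\,dx\,d\tau
\le C\bigl(\|\nabla_x\varphi\|_{L^2}^2+\|\psi\|_{L^2}^2\bigr)
+C\!\int_0^t\!\!\int_{\R^3}\langle \tau\rangle^{-2\mu}|F||\pa_t u|\,dx\,d\tau.
\]

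To close the estimate, I would bound the forcing integral by Cauchy--Schwarz in space, obtaining $\int_0^t\langle\tau\rangle^{-\mu}\|F(\tau,\cdot)\|_{L^2}\cdot\langle\tau\rangle^{-\mu}\|\pa u(\tau,\cdot)\|_{L^2}\,d\tau$, and then apply the standard $y^2\le a^2+\int_0^t b(\tau)y(\tau)\,d\tau\Rightarrow y\le a+\tfrac12\int_0^t b$ lemma (equivalently, differentiate $E(t):=\langle t\rangle^{-\mu}\|\pa u(t,\cdot)\|_{L^2}+\delta$ and let $\delta\to 0$) to peel off one factor of the energy and recover the linear inequality of the stated form.

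The conceptual heart of the argument --- and the step I expect to be most delicate --- is the two-fold choice of weight: one must arrange that $q$ stays bounded (so that $e^{q}$ does not corrupt the energy and the final constants $C=C(\rho)$ depend only on $\rho$), while $q'(\sigma)=\langle\sigma\rangle^{-1-\rho}$ is precisely integrable enough to be both nondegenerate and summable. The identity $|Zu|^2=(\pa_tu)^2+|\nabla_x u|^2+2(\pa_ru)(\pa_tu)$, which arises because the spatial gradient of the ghost weight points radially, is what makes the tangential combinations $Z_j$ appear on the left-hand side; replacing the multiplier or weight breaks this algebra. The rest of the calculation is a bookkeeping check that the $\mu$-weight only produces additional good terms.
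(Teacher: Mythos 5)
Your proposal is correct and follows essentially the same route as the paper: it is exactly Alinhac's ghost-weight energy argument with the multiplier $\jb{t}^{-2\mu}e^{\zeta_\rho(|x|-t)}\pa_t u$ (your $q$ is the paper's $\zeta_\rho$), the sign observations for $\mu\ge 0$ and $q'\ge 0$, the identity $|Zu|^2=(\pa_t u)^2+|\nabla_x u|^2+2(\pa_r u)(\pa_t u)$, and the standard quadratic-to-linear Gronwall step. The only blemish is the typo $e^{q(r-|x|)}$ in the statement of the multiplier, which should read $e^{q(|x|-t)}$, as your subsequent computation correctly uses.
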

\begin{proof}[Outline of the proof] We multiply $\dal u$ by
$\jb{t}^{-2\mu}e^{\zeta_\rho(|x|-t)} (\pa_t u)$,
integrate it over $\R^3$, and perform the integration by parts as in the standard energy estimate, where
$$
\zeta_\rho(s)=\int_{-\infty}^s \jb{\tau}^{-(1+\rho)} d\tau.
$$
Then we obtain the desired result, since there exists a positive constant $C_\rho$
such that $1\le e^{\zeta_\rho(s)}\le C_\rho$ for $s\in \R$. 
\end{proof}

Now we turn our attention to the decay estimate.
We define
$$
{\mathcal W}_\rho(t, r)=
 \begin{cases}
  \jb{t+r}^{\rho} & \text{if $\rho<0$},\\
  \bigl\{\log \bigl(2+\jb{t+r}\jb{t-r}^{-1}\bigr)\bigr\}^{-1} 
& \text{if $\rho=0$},\\
  \jb{t-r}^{\rho} & \text{if $\rho>0$}.                  
 \end{cases}
$$
\begin{lemma}\label{Asakura}
Let $u$ be the solution to \eqref{LinWave}-\eqref{LinData}.
Suppose that $\mu>0$ and $\kappa>1$.
Then there exists a positive constant $C=C(\mu, \kappa)$ such that
\begin{align}
&
\jb{t+|x|} 
{\mathcal W}_{\mu-1}(t,|x|) |u(t,x)|
\nonumber\\
& \quad \le
C \sup_{|y-x|\le t} \jb{y}^\mu \biggl(\jb{y} \sum_{|\alpha|\le 1} 
 \left|\left(\pa_x^\alpha \varphi \right)(y) \right|
{}+|y| \left|\psi(y)\right|\biggr)
\nonumber\\
& \qquad\, {}+C\sup_{\tau\in [0,t]}\sup_{|y-x|\le t-\tau} |y|\jb{\tau+|y|}^{\mu}
\jb{\tau-|y|}^{\kappa}
|F(\tau,y)|
\label{Otto}
\end{align}
for $(t,x)\in \R_+\times \R^3$, provided that the right-hand side of \eqref{Otto}
is finite. Here $\pa_x=(\pa_1, \pa_2, \pa_3)$, and we have used the standard notation of
multi-indices.
\end{lemma}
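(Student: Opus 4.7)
The plan is to combine the Kirchhoff representation for the homogeneous problem with Duhamel's principle for the forcing term, reduce the resulting surface integrals over spheres to one-dimensional integrals via the geometry of two intersecting spheres, and then estimate the latter elementarily with a case split governed by the sign of $\mu-1$. Split $u=u_0+u_F$, where $u_0$ solves $\dal u_0=0$ with the data $(\varphi,\psi)$ and $u_F$ solves $\dal u_F=F$ with vanishing data.

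First I would treat $u_0$. Writing $u_0(t,x)=M_t\varphi(x)+M_t[(y-x)\cdot\nabla\varphi](x)+t\,M_t\psi(x)$, where $M_t$ denotes the spherical mean centered at $x$ of radius $t$, one obtains the pointwise bound
$$
|u_0(t,x)|\le C\sup_{|y-x|=t}\bigl(|\varphi(y)|+t|\nabla\varphi(y)|+t|\psi(y)|\bigr).
$$
Inserting and removing a factor $\jb{y}^{\mu}$ inside the supremum and using $|y|\le t+|x|$ and $\jb{y}\ge C^{-1}\jb{t-|x|}$ on the sphere $|y-x|=t$, one extracts the weight $\jb{t+|x|}^{-1}{\mathcal W}_{\mu-1}(t,|x|)^{-1}$, with the three regimes $\mu<1$, $\mu=1$, $\mu>1$ corresponding directly to the three branches of ${\mathcal W}_{\mu-1}$.

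Next I would treat $u_F$, which by the retarded-potential representation equals
$$
u_F(t,x)=\frac{1}{4\pi}\int_0^t\int_{|y-x|=t-\tau}\frac{F(\tau,y)}{t-\tau}\,dS_y\,d\tau.
$$
The geometric identity $dS_y=\frac{(t-\tau)\rho}{|x|}\,d\rho\,d\phi$ on the sphere $|y-x|=t-\tau$ (parametrized by $\rho=|y|$ and an azimuthal angle $\phi$, valid for $|x|>0$) turns a spherically symmetric integrand into a one-dimensional integral:
$$
\int_{|y-x|=t-\tau}\frac{h(|y|)}{t-\tau}\,dS_y=\frac{2\pi}{|x|}\int_{||x|-(t-\tau)|}^{|x|+(t-\tau)}\rho\,h(\rho)\,d\rho.
$$
Let $M$ denote the second supremum on the right-hand side of \eqref{Otto}, so that $\rho|F(\tau,y)|\le M\jb{\tau+\rho}^{-\mu}\jb{\tau-\rho}^{-\kappa}$. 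Substituting gives
$$
|u_F(t,x)|\le\frac{CM}{|x|}\int_0^t\!\int_{||x|-(t-\tau)|}^{|x|+(t-\tau)}\frac{d\rho\,d\tau}{\jb{\tau+\rho}^\mu\jb{\tau-\rho}^\kappa}.
$$
A change of variables to the null coordinates $\alpha=\tau+\rho$, $\beta=\tau-\rho$ (with Jacobian $1/2$) uncouples the integrals. The $\beta$-range lies within $[|x|-t,|x|]$ (or its reflection), and the $\beta$-integration of $\jb{\beta}^{-\kappa}$ is bounded uniformly because $\kappa>1$. The $\alpha$-integration of $\jb{\alpha}^{-\mu}$ on a range of length $\sim\min(t,|x|)$ starting near $\jb{|t-|x||}$ produces precisely the three cases: a bounded integral (giving $\jb{t+|x|}^{\mu-1}$) when $\mu<1$, a logarithm $\log(2+\jb{t+|x|}/\jb{t-|x|})$ when $\mu=1$, and $\jb{t-|x|}^{-(\mu-1)}$ when $\mu>1$. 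After multiplication by $|x|^{-1}$ times the length of the $\rho$-interval (which is comparable to $\min(|x|,t-\tau)$), the $1/|x|$ factor cancels and the target bound $\jb{t+|x|}^{-1}{\mathcal W}_{\mu-1}(t,|x|)^{-1}\cdot M$ emerges.

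The main obstacle is the careful matching of the $(\alpha,\beta)$-region with the three regimes of ${\mathcal W}_{\mu-1}$ and controlling the apparent singularity of the Jacobian as $|x|\to 0$. The small-$|x|$ regime is handled by noting that the $\rho$-interval has length $2\min(|x|,t-\tau)$, so the $|x|^{-1}$ factor in the Jacobian is absorbed; alternatively one may treat $|x|\le 1$ separately by using $\jb{t+|x|}\le 2\jb{t}$ and estimating $|u_F(t,x)|$ directly from the absolute pointwise bound on $F$. The boundary case $\mu=1$, where the $\alpha$-integral is logarithmically divergent at infinity, is exactly the reason the middle branch of ${\mathcal W}_{\mu-1}$ carries the logarithmic weight, and tracking this logarithm through the bookkeeping is the subtlest part of the argument.
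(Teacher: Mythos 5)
Your treatment of the Duhamel part is essentially the paper's own argument: the paper also represents the forced part of the solution by the retarded potential (John's formula), reduces the sphere integrals to the one-dimensional quantity $J_{\mu,\kappa}(t,r)=r^{-1}\int_0^t\int_{|r-(t-\tau)|}^{r+t-\tau}(1+\tau+\lambda)^{-\mu}(1+|\tau-\lambda|)^{-\kappa}\,d\lambda\,d\tau$, passes to null coordinates, uses $\kappa>1$ to dispose of the $q$-integral, and then splits into $r\ge(1+t)/2$ and $r<(1+t)/2$ (the latter absorbing the $r^{-1}$, which is exactly your small-$|x|$ remark); the three branches of ${\mathcal W}_{\mu-1}$ come out of the remaining $p$-integral just as you describe.

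The homogeneous part, however, contains a genuine gap. The paper does not reprove it (it cites Asakura and Katayama--Yokoyama), and your replacement argument fails: the bound $|u_0(t,x)|\le C\sup_{|y-x|=t}\bigl(|\varphi(y)|+t|\nabla\varphi(y)|+t|\psi(y)|\bigr)$ followed by inserting $\jb{y}^{\mu}$ and using $\jb{y}\ge c\jb{t-|x|}$ cannot produce the factor $\jb{t+|x|}^{-1}$ near the light cone. Concretely, take $\varphi=0$, $\psi$ a fixed bump with $\psi(0)=1$, and $|x|=t$: the data supremum on the right of \eqref{Otto} is a finite constant (the weight $|y|\jb{y}^{\mu}$ vanishes at $y=0$ but its sup against $\psi$ is finite), your intermediate bound is at least $t|\psi(0)|=t$, and after multiplying by $\jb{t+|x|}\,{\mathcal W}_{\mu-1}$ you are left with a quantity of size roughly $t^{2}$ (for $\mu>1$) instead of a constant; the actual solution here is $O(1/t)$ because the spherical mean only sees an $O(1)$ portion of the sphere where $\psi\neq0$. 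The same loss (a factor of order $t\jb{t+r}\jb{t-r}^{-2}$ for the $tM_t\psi$ and $M_t[(y-x)\cdot\nabla\varphi]$ terms, and $\jb{t+r}\jb{t-r}^{-2}$ even for $M_t\varphi$) occurs throughout the region $|x|\approx t$. The point is that the data terms must be estimated through the spherical \emph{average}, not the supremum of the integrand: apply to $\int_{|y-x|=t}\jb{y}^{-\mu-1}\,dS_y$ and $\int_{|y-x|=t}|y|^{-1}\jb{y}^{-\mu}\,dS_y$ the same reduction $\int_{|y-x|=t}h(|y|)\,dS_y=\frac{2\pi t}{|x|}\int_{||x|-t|}^{|x|+t}\rho\,h(\rho)\,d\rho$ that you already use for $u_F$, and the resulting one-dimensional integrals yield the weight $\jb{t+|x|}^{-1}{\mathcal W}_{\mu-1}^{-1}$ by the same case analysis; this is precisely the content of the cited result of Asakura.
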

\begin{proof}[Outline of the proof]
The case $F\equiv 0$ is essentially proved in Asakura~\cite[Proposition~1.1]{Asa86} (see also Katayama-Yokoyama~\cite[Lemma 3.1]{Kat-Yok06}). Thus we assume $\varphi=\psi\equiv 0$.
Then the case $\mu\ge 1$ is treated in Kubota-Yokoyama~\cite[Lemma 3.2]{Kub-Yok01},
and the other case can be treated similarly. Here we give the outline of the proof.

The solution $u$ with $\varphi=\psi \equiv 0$ can be written as
$$
u(t,x)=\frac{1}{4\pi r}\int_0^t \left(\int_{|r-(t-\tau)|}^{r+t-\tau}
\left(\int_0^{2\pi}F\left(\tau, \lambda \Theta(\lambda, \theta; t-\tau, x)\right)
d\theta\right) \lambda d\lambda \right) d\tau,
$$
where we put $r=|x|$, and $\Theta$ is a certain $S^2$-valued function
with $$
|\lambda \Theta(\lambda,\theta; s, x)-x|=s
$$
(see John \cite{Joh83}).
Therefore \eqref{Otto} with $\varphi=\psi \equiv 0$ is obtained if we can show
\begin{align}
J_{\mu, \kappa}(t,r):= & \frac{1}{r}\int_0^t \left(\int_{|r-(t-\tau)|}^{r+t-\tau}
(1+\tau+\lambda)^{-\mu}(1+|\tau-\lambda|)^{-\kappa} d\lambda\right)d\tau 
\nonumber\\
\le & C\jb{t+r}^{-1}{\mathcal W}_{\mu-1}(t,r)^{-1} 
\label{KubYok01}
\end{align}
with some positive constant $C$.
We put $p=\tau+\lambda$ and $q=\lambda-\tau$. Then we have
\begin{align}
J_{\mu, \kappa}(t,r)=&\frac{1}{2r}\int_{|t-r|}^{t+r}(1+p)^{-\mu}
\left(\int_{r-t}^p (1+|q|)^{-\kappa} dq\right) dp
\nonumber\\
\le & \frac{1}{(\kappa-1)r}\int_{|t-r|}^{t+r} (1+p)^{-\mu} dp, 
\label{KubYok02}
\end{align}
where we have used the assumption $\kappa>1$.
Now, if $r\ge (1+t)/2$,
then a direct calculation of the last integral in \eqref{KubYok02}
leads to \eqref{KubYok01},  since we have $r^{-1} \le 3(1+t+r)^{-1}$ for this case. 
If $r<(1+t)/2$, 
then we have $(1+|t-r|)^{-1}\le (1+t-r)^{-1} \le 3(1+t+r)^{-1}$, and we get
$$
\frac{1}{r}\int_{|t-r|}^{t+r} (1+p)^{-\mu} dp \le (1+|t-r|)^{-\mu}\frac{1}{r}\int_{|t-r|}^{t+r}dp\le C(1+t+r)^{-\mu},
$$
which implies the desired result.
\end{proof}

\subsection{The Friedlander radiation field and the translation representation}
\label{translation}
It is known that 
if $(\varphi,\psi)\in C^\infty_0(\R^3)\times C^\infty_0(\R^3)$, then
the radiation field ${\mathcal F}_0[\varphi, \psi]$, given by \eqref{FriedRad},
belongs to $C^\infty_0(\R\times S^2)$. In fact, if $\supp\varphi\cup\supp\psi\subset B_R$
for some $R>0$, then \eqref{FriedRad} and \eqref{RadTrans}
imply
\begin{equation}
{\mathcal F}_0[\varphi, \psi](\sigma, \omega)=0,\quad |\sigma|\ge R,\ \omega\in S^2.
\label{RadSupport}
\end{equation}

We define the mapping ${\mathcal T}:C^\infty_0(\R^3)\times C^\infty_0(\R^3)
\to C^\infty_0(\R\times S^2)$ by
$$
{\mathcal T}[\varphi, \psi] (\sigma, \omega)
:=\pa_\sigma {\mathcal F}_0[\varphi, \psi] (\sigma, \omega),\quad
(\sigma, \omega)\in \R\times S^2
$$
for $(\varphi, \psi)\in C^\infty_0(\R^3)\times C^\infty_0(\R^3)$.
${\mathcal T}[\varphi, \psi]$ is called the {\it translation representation}
of $(\varphi, \psi)$, because we have
$$
{\mathcal T}\bigl[{\mathcal U}_0[\varphi,\psi](t,\cdot), \pa_t {\mathcal U}_0[\varphi, \psi](t,\cdot)\bigr](\sigma, \omega)
={\mathcal T}[\varphi, \psi] (\sigma-t, \omega),
$$
where ${\mathcal U}_0[\varphi, \psi]$ is 
defined in Section~\ref{notation} 
(namely it is the solution to \eqref{LinearWave}-\eqref{LinearData}).

Let $H_0$ be the completion of $C^\infty_0(\R^3)\times C^\infty_0(\R^3)$ with respect to the norm
$$
\|(\varphi, \psi)\|_{H_0}:=\left(\frac{1}{2}
\int_{\R^3} \left(|\nabla_x\varphi(x)|^2
{}+|\psi(x)|^2\right)dx \right)^{1/2}.
$$ 
Then it is known that
${\mathcal T}$ is uniquely extended to an isometric isomorphism
from $H_0$ onto $L^2(\R\times S^2)$. We refer the readers to Lax-Phillips 
\cite[Chapter IV]{LaxPhi89} for the facts mentioned above. 
Note that $(\varphi,\psi)\in H_0$ if and only if
$\varphi\in \dot{H}^1(\R^3)$ and $\psi\in L^2(\R^3)$.
\begin{lemma}\label{Trans01}
Let $(\varphi, \psi)\in H_0$.
Then we have
$$
\lim_{t\to \infty} \left(\frac{1}{2}
\sum_{a=0}^3
\|\pa_a u_0(t,\cdot)-T_*^a(t,\cdot)\|_{L^2(\R^3)}^2
\right)^{1/2}=0,
$$
where 
\begin{align*}
u_0(t,x)=& \, {\mathcal U}_0[\varphi, \psi](t,x),\\
T_*^a(t, x)=&\left. \left(
\omega_a r^{-1}{\mathcal T}[\varphi, \psi](r-t,\omega)
\right)\right|_{r=|x|, \omega=(\omega_1, \omega_2, \omega_3)=x/|x|},
\quad 0\le a\le 3.
\end{align*}
\end{lemma}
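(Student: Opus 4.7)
The plan is a density argument: first establish the limit for $(\varphi,\psi)\in C^\infty_0(\R^3)\times C^\infty_0(\R^3)$ via the pointwise asymptotic \eqref{FriedAsymp}, then extend to $H_0$ using the isometry property of $\mathcal{T}$.

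Set $E(t;\varphi,\psi):=\bigl(\frac{1}{2}\sum_{a=0}^3\|\pa_a u_0(t,\cdot)-T_*^a(t,\cdot)\|_{L^2}^2\bigr)^{1/2}$, which is a seminorm in $(\varphi,\psi)$ for each fixed $t$ since both $\mathcal{U}_0$ and $\mathcal{T}$ are linear. The crux of the density argument will be the uniform-in-$t$ bound $E(t;\varphi,\psi)\le 2\|(\varphi,\psi)\|_{H_0}$. Energy conservation for $u_0$ gives $\frac{1}{2}\sum_{a=0}^3\|\pa_a u_0(t,\cdot)\|_{L^2}^2=\|(\varphi,\psi)\|_{H_0}^2$; passing to spherical coordinates, changing variables $\sigma=r-t$, and using $\sum_{a=0}^3\omega_a^2=2$ gives
$$
\frac{1}{2}\sum_{a=0}^3\|T_*^a(t,\cdot)\|_{L^2}^2=\int_{-t}^\infty\!\int_{S^2}|\mathcal{T}[\varphi,\psi](\sigma,\omega)|^2\,d\omega\,d\sigma\le\|\mathcal{T}[\varphi,\psi]\|_{L^2(\R\times S^2)}^2=\|(\varphi,\psi)\|_{H_0}^2,
$$
where the last equality is the cited isometry property of $\mathcal{T}$. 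The triangle inequality then yields the bound.

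For the compactly supported case, assume $\supp\varphi\cup\supp\psi\subset B_R$. The strong Huygens principle gives $u_0(t,x)=0$ for $|r-t|\ge R$, and \eqref{RadSupport} (together with $\mathcal{T}=\pa_\sigma\mathcal{F}_0$) gives $T_*^a(t,x)=0$ for $|r-t|>R$; hence the difference $\pa_a u_0-T_*^a$ is supported in the annular shell $\{|r-t|\le R\}$. For $t\ge\max(2R,2)$ this shell lies in the region $r\ge t/2\ge 1$, where \eqref{FriedAsymp} (combined with $\pa_\sigma\mathcal{F}_0[\varphi,\psi]=\mathcal{T}[\varphi,\psi]$) yields the pointwise bound $\sum_{a=0}^3|\pa_a u_0-T_*^a|\le Cr^{-1}(1+t+r)^{-1}\le Ct^{-2}$. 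Integrating the square of this bound over the shell (which has volume $O(t^2)$) gives $E(t;\varphi,\psi)^2=O(t^{-2})\to 0$.

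Finally I would combine the two pieces. Given $\delta>0$ and $(\varphi,\psi)\in H_0$, pick $(\varphi_n,\psi_n)\in C^\infty_0(\R^3)\times C^\infty_0(\R^3)$ with $\|(\varphi-\varphi_n,\psi-\psi_n)\|_{H_0}<\delta$. By the seminorm property and the uniform bound,
$$
E(t;\varphi,\psi)\le E(t;\varphi_n,\psi_n)+2\|(\varphi-\varphi_n,\psi-\psi_n)\|_{H_0}\le E(t;\varphi_n,\psi_n)+2\delta,
$$
so $\limsup_{t\to\infty}E(t;\varphi,\psi)\le 2\delta$; since $\delta$ was arbitrary, the limit is zero. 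The main technical point is the uniform bound on $E$, which relies crucially on the isometry $\mathcal{T}\colon H_0\to L^2(\R\times S^2)$; once that is granted, the rest is a mechanical application of \eqref{FriedAsymp} and density.
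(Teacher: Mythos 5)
Your proposal is correct and follows essentially the same route as the paper: a density argument in $H_0$, with the compactly supported case handled via the Huygens principle and \eqref{FriedAsymp}, and the extension controlled by exactly the two ingredients the paper uses (energy conservation for the $\pa_a u_0$ part and the isometry of ${\mathcal T}$ for the $T_*^a$ part), which you merely repackage as the uniform seminorm bound $E(t;\cdot,\cdot)\le 2\|\cdot\|_{H_0}$.
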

\begin{proof}
Let $\ve>0$. Then there exists $(\widetilde{\varphi}, \widetilde{\psi})\in
C^\infty_0(\R^3)\times C^\infty_0(\R^3)$ such that 
\begin{equation}
\label{SmoothApproximation}
\|(\varphi, \psi)-(\widetilde{\varphi}, \widetilde{\psi})\|_{H_0}<\ve.
\end{equation}
Let $\widetilde{u}_0$ and $\widetilde{T}_{*}^a$ be defined similarly to $u_0$ and $T_*^a$
by replacing $(\varphi, \psi)$  
with $(\widetilde{\varphi}, \widetilde{\psi})$ in their definitions.
Suppose that $\supp \widetilde{\varphi} \cup \supp \widetilde{\psi}\subset B_R$ with $R>0$. Then the Huygens principle implies $\widetilde{u}_0(t,x)=0$ for $|r-t|\ge R$ with $r=|x|$. We also have $\widetilde{T}_*^a(t,x)=0$ for $|r-t|\ge R$ 
(see \eqref{RadSupport}).
Hence, for $t\ge \max\{2R,2\}$, \eqref{FriedAsymp} leads to
\begin{align}
& \|\pa_a\widetilde{u}_0(t,\cdot)-\widetilde{T}_{*}^a(t,\cdot)\|_{L^2(\R^3)}^2 \nonumber\\
& \qquad 
= \int_{\omega\in S^2}\left(\int_{t-R}^{t+R} |r\pa_a\widetilde{u}_0(t,r\omega)-\omega_a\pa_\sigma {\mathcal F}_0[\widetilde{\varphi}, \widetilde{\psi}](r-t, \omega)|^2 dr \right)dS_\omega \nonumber\\
& \qquad 
\le CR(1+t)^{-2},
\label{Tabyonse01}
\end{align}
since we have $t-R\ge t/2\ge 1$.
Here and hereafter $dS_\omega$ denotes the area element on $S^2$.
From the energy identity and \eqref{SmoothApproximation} we get
\begin{equation}
\|u_0(t,\cdot)-\widetilde{u}_0(t,\cdot)\|_{E}
=\|({\varphi}, {\psi})-(\widetilde{\varphi}, \widetilde{\psi})\|_{H_0}<\ve.
\label{Tabyonse02}
\end{equation}
Since ${\mathcal T}$ is an isometry from $H_0$ to $L^2(\R\times S^2)$,
by \eqref{SmoothApproximation} we obtain
\begin{align}
\frac{1}{2}
 \sum_{a=0}^3\|T_*^a(t,\cdot)-\widetilde{T}_{*}^a(t,\cdot)\|_{L^2(\R^3)}^2
= & \int_{\omega\in S^2}\left(\int_{0}^\infty 
|{\mathcal T}[\varphi-\widetilde{\varphi},\psi-\widetilde{\psi}](r-t,\omega)
|^2 dr\right)dS_\omega 
\nonumber\\
\le & 
\bigl\| 
  {\mathcal T}\bigl[\varphi-\widetilde{\varphi}, \psi-\widetilde{\psi}\,\bigr]
\bigr\|_{L^2(\R\times S^2)}^2< \ve^2.
\label{Tabyonse03}
\end{align}
From \eqref{Tabyonse01}, \eqref{Tabyonse02}, and \eqref{Tabyonse03}
we get
$$
\limsup_{t\to \infty} \left(\frac{1}{2}\sum_{a=0}^3 \|\pa_a u_0(t,\cdot)-T_{*}^a(t,\cdot)\|_{L^2(\R^3)}^2\right)^{1/2}\le 
       2\ve.
$$
Since $\ve>0$ can be chosen arbitrarily, we obtain the desired result.
\end{proof}

\section{Proof of Proposition \ref{Example}}
\label{Ex3}
In this section, we are going to prove Proposition \ref{Example}.
\begin{proof}[Proof of {\rm (1)}] Let the assumptions in (1) be fulfilled.
Then, as in \eqref{Channie}, $F_j^0$ can be written in terms of the null forms. Hence it is clear by Lemma~\ref{NullNull} that we have
\begin{equation}
F_j^{0}(\pa u) \stackrel{{\mathcal X}_N}{\sim} 0,\quad 1\le j\le N,
\end{equation}
which implies the desired result immediately. 
\end{proof} 
\begin{proof}[Proof of {\rm (2)}] 
Suppose that $F$ depends only on $\pa u$, i.e., $F=F(\pa u)$, and that 
Condition~$\ref{AlinhacCond}$ 
is satisfied. As before, we write $F^{\rm red}=F^{\rm red}(\omega, Y)$
for the reduced nonlinearity.
Let $u=(u_j)_{1\le j\le N}$ be a (local) solution to \eqref{OurSys} with 
the initial data \eqref{Data0}.
Set 
\begin{equation}
\label{Boss}
 v=(v_j)_{1\le j\le 5N}^{\rm T}=\left(u^{\rm T}, \pa_0 u^{\rm T}, \pa_1 u^{\rm T}, \pa_2 u^{\rm T}, \pa_3 u^{\rm T}\right)^{\rm T}, 
w=(w_l)_{1\le l\le N_0}^{\rm T}
\end{equation}
with
\begin{equation}
\label{Ofuku}
w_l=\sum_{a=0}^3 \sum_{k=1}^N h_{l, ka} \left(\pa_a u_k \right), \quad 1\le l\le N_0,
\end{equation}
where the constants $h_{l, ka}$ are from \eqref{Mil}.
We put
$u^*=(u^*_j)_{1\le j\le N^*}^{\rm T}=\left( v^{\rm T}, w^{\rm T} \right)^{\rm T}$
with $N^*=5N+N_0$.
Given $(f,g)\in {\mathcal X}_N$, from \eqref{OurSys} and \eqref{Data0} we can determine
the initial profile $\left(\ve^{-1}u^*(0,x), \ve^{-1}\pa_t u^*(0,x)\right)$ for $\ve\in (0,1]$,
and we denote this initial profile by 
$\left(f^*(x;\ve), g^*(x;\ve) \right)=\left(f^*[f,g](x;\ve), g^*[f,g](x;\ve) \right)$.
Since we can see that $(f^*,g^*)$ converges to a $C^\infty_0\times C^\infty_0$-function as $\ve\to +0$, 
we define $\bigl(f^*(x;0), g^*(x;0)\bigr)=\lim_{\ve\to +0}\bigl(f^*(x;\ve), g^*(x;\ve)\bigr)$.
Then it is easy to show that $(f^*, g^*)\in {\mathcal X}_{N^*}$.
Note that the first $N$ components of $f^*$ (resp.~$g^*$) are nothing but
$f$ (resp.~$g$) by definition. 
Now we define a subset $D$ of ${\mathcal X}_{N^*}$ by
$$
D=\bigl\{ \left(f^*[f,g], g^*[f,g] \right);\ (f,g)\in {\mathcal X}_N \bigr\}.
$$
Since $F_j$ is a homogeneous polynomial of degree $2$, we can write 
\begin{equation}
\label{Tom}
F_j(\pa u)=\sum_{k, l=1}^N \sum_{b,c=0}^3 q_{j}^{kb, lc} (\pa_b u_k)(\pa_c u_l)
\end{equation}
with some constants $q_j^{kb, lc}$.
Recall that $v_j=u_j$ and $v_{j+(a+1)N}=\pa_a u_j$ for $1\le j\le N$ and $0\le a\le 3$.
We put
\begin{align*}
F_j^*(\pa u^*):=&\sum_{k, l=1}^N \sum_{b,c=0}^3 q_{j}^{kb, lc} (\pa_b v_k)(\pa_c v_l),\\
F_{j+(a+1)N}^*(\pa u^*)
:=& \sum_{k, l=1}^N\sum_{b,c=0}^3
q_{j}^{kb, lc}\left(
(\pa_b v_k)(\pa_c v_{l+(a+1)N})+(\pa_b v_{k+(a+1)N})(\pa_c v_l)
\right)
\end{align*}
for $1\le j\le N$ and $0\le a\le 3$, so that we have
\begin{equation}
\label{Lime}
\text{$F_j^*(\pa u^*)=F_j(\pa u)$ and $F_{j+(a+1)N}^*(\pa u^*)=\pa_a\left(F_j(\pa u)\right)$.}
\end{equation}
Then we find that $u^*$ satisfies the extended system
\begin{equation}
\label{HolTheGarasshi}
\begin{cases}
\dal v_j=F_j^*(\pa u^*),  & 1\le j\le N,\\
\dal v_{j+(a+1)N} =
F_{j+(a+1)N}^*(\pa u^*), & 1\le j\le N, \ 0\le a\le 3,\\
 \dal w_l=\sum_{a=0}^3 \sum_{k=1}^N h_{l, ka} 
F_{k+(a+1)N}^* (\pa u^*),
 & 1\le l \le N_0
\end{cases}
\end{equation}
with initial data
\begin{equation}
\label{Nisesshi00}
u^*(0,x)=\ve f^*[f,g](x; \ve),\ (\pa_t u^*)(0,x)=\ve g^*[f,g](x; \ve),\quad  x\in \R^3.
\end{equation}

Conversely, 
let $u^*=(u_j^*)_{1\le j\le N^*}^{\rm T}=\left(v^{\rm T}, w^{\rm T}\right)^{\rm T}$ be the classical solution to \eqref{HolTheGarasshi}-\eqref{Nisesshi00}.
If we put
\begin{equation}
u=(u_j)_{1\le j\le N}^{\rm T}:=(u_j^*)_{1\le j\le N}^{\rm T}
\left(=(v_j)_{1\le j\le N}^{\rm T}\right),
\label{Prim}
\end{equation}
then $u$ is apparently the solution to the original problem \eqref{OurSys}-\eqref{Data0}.
Moreover, from the uniqueness of the solution we find that
\eqref{Boss}, \eqref{Ofuku}, and \eqref{Lime} are valid for all $t\ge 0$.
To sum up, we have proved that solving \eqref{OurSys}-\eqref{Data0} with $(f,g)\in {\mathcal X}_N$
is equivalent to solving \eqref{HolTheGarasshi}-\eqref{Nisesshi00} with $(f^*, g^*)\in D(\subset {\mathcal X}_{N^*})$.

Now we are going to prove that
Condition~$\ref{OurCond}$
is satisfied for \eqref{HolTheGarasshi},
if it is viewed as a system of $u^*$.
It is trivial to check the condition \eqref{Form01}, and we concentrate on the condition \eqref{Form02}.
Let $u^*=(v^{\rm T}, w^{\rm T})^{\rm T}$ be the solution
to \eqref{HolTheGarasshi}-\eqref{Nisesshi00}, and let 
$u$ be given by \eqref{Prim}.
As before, we write $r=|x|$, and 
$\omega=|x|^{-1} x=(\omega_1, \omega_2, \omega_3)\in S^2$.
We always assume $r \ge t/2\ge 1$ in this subsection
from now on. Hence we have
$$
 r^{-1}\le 4(1+t+r)^{-1}.
$$
Note that $\Phi \stackrel{D}{\sim} \Psi$ in this proof means 
\eqref{Tama} with $u$ being replaced by $u^*$.

For $N\times 4$-matrix valued functions
${\varphi}=(\varphi_{j, a})
$ and ${\psi}=(\psi_{j, a})
$ with $1\le j\le N$ and $0\le a\le 3$, we define
$$
F_j^\dagger(\varphi, \psi):=\sum_{k, l=1}^N\sum_{b,c=0}^3 
q_{j}^{kb, lc} \varphi_{k, b} \psi_{l, c}, \quad 1\le j\le N
$$
with the coefficients $q_{j}^{kb, lc}$ from \eqref{Tom}, 
so that we have
$$
F_j^*(\pa u^*)=F_j(\pa u)=F_j^\dagger(\pa u, \pa u), \quad 1\le j\le N
$$
by regarding $\pa u=(\pa_a u_j)_{1\le j\le N, 0\le a\le 3}$ as an 
$N\times 4$-matrix valued function.
We put $Z_0=0(=\omega_0\pa_t+\pa_0)$. Then from \eqref{AlinhacField} we get
\begin{equation}
\pa_a=Z_a-\omega_a\pa_t,\quad 0\le a\le 3.
\label{Hah}
\end{equation}
We write $Z^\star u=(Z_a u_j)_{1\le j\le N, 0\le a\le 3}$, and $\omega^\star \pa_t u=(\omega_a \pa_t u_j)_{1\le j\le N, 0\le a\le 3}$.
Regarding them as $N\times 4$ matrix-valued functions, we obtain
\begin{equation}
\label{Est00}
F_j(\pa u)=F_j^{\rm red}(\omega, \pa_t u)-F_j^\dagger(Z^\star u, \omega^\star \pa_t u)
{}-F_j^\dagger(\omega^\star \pa_t u, Z^\star u)+F_j^\dagger(Z^\star u, Z^\star  u),
\end{equation}
where we have used 
$F_j^\dagger(-\omega^\star \pa_t u, -\omega^\star \pa_t u)=F_j^{\rm red}(\omega, \pa_t u)$.
Let $s$ be a nonnegative integer.
Since we have $\sup_{|x|\ge t/2\ge 1}|\omega|_s<\infty$,
it follows from \eqref{CommRZG03} in Lemma~\ref{CommZG} that
\begin{align}
|F_j^\dagger (Z^\star u, \omega^\star \pa_t u)|_s
\le & C\left(|\pa u|_{[s/2]}|Z^\star u|_s+|Z^\star u|_{[s/2]}|\pa u|_s\right)\nonumber\\
\le & C\jb{t+r}^{-1}
\left(|\pa u|_{[s/2]}|u|_{s+1}+|u|_{[s/2]+1}|\pa u|_{s}\right),
\label{Est01}
\end{align}
which implies
$$
F_j^\dagger (Z^\star u, \omega^\star \pa_t u)\stackrel{D}{\sim} 0,\quad 1\le j\le N.
$$
In a similar manner we can show that
$$
F_j^\dagger(\omega^\star \pa_t u, Z^\star u)\stackrel{D}{\sim} F_j^\dagger(Z^\star u, Z^\star u)
\stackrel{D}{\sim}0,
$$
and we get
\begin{equation}
\label{FourSixteen}
F_j^*(\pa u^*)=F_j(\pa u) \stackrel{D}{\sim} F_j^{\rm red}(\omega, \pa_t u),\quad 1\le j \le N.
\end{equation}
Similarly to \eqref{Est01}, using \eqref{CommZG03} in Lemma~\ref{CommZG}
as well as \eqref{CommRZG03}, and recalling \eqref{Boss}, 
we find from \eqref{Est00} that
\begin{align*}
& \bigl|\pa_a \bigl(F_{j}(\pa u)\bigr)-\pa_a \bigl(F_j^{\rm red}(\omega, \pa_t u)\bigr)\bigr|_s\\
& \qquad \le C\jb{t+r}^{-1}\left(|\pa u^*|_{[s/2]}|u^*|_{s+1}+|u^*|_{[s/2]+1}|\pa u^*|_{s}\right).
\end{align*} 
In other words, we obtain
\begin{equation}
\label{Est06}
\pa_a\bigl(F_j(\pa u)\bigr) \stackrel{D}{\sim} \pa_a \bigl(
                                  F_j^{\rm red}(\omega, \pa_t u) 
                                                     \bigr),
\quad 1\le j\le N, \ 0\le a\le 3.
\end{equation}

From \eqref{Mil} and \eqref{Ofuku}, we get
\begin{equation}
\label{FourEight}
h_l(\omega, \pa_t u)+w_l=\sum_{a=0}^3\sum_{k=1}^N h_{l, ka} 
     Z_a u_k,\quad 1\le l\le N_0.
\end{equation}
Now 
we define $G_j(\omega, u^*, \pa u^*)$ for $1\le j\le 5N$ by
\begin{equation}
\label{Nisesshi01}
 G_j:=-\sum_{l=1}^{N_0} \sum_{k=1}^N g_{jl, k}(\omega) w_l (\pa_t v_k)
\biggl(=-\sum_{l=1}^{N_0} g_{jl}(\omega, \pa_t u)w_l\biggr)
\end{equation}
for $1\le j\le N$, and
\begin{equation}
\label{Nisesshi02}
G_{j+(a+1)N}:= -\sum_{l=1}^{N_0} \sum_{k=1}^N g_{jl, k}(\omega) 
 \left\{(\pa_a w_l)(\pa_t v_k)+w_l \left(\pa_t v_{k+(a+1)N}\right) 
\right\}
\end{equation} 
for $1\le j\le N$ and $0\le a\le 3$,
where $g_{jl, k}(\omega)$ and $g_{jl}(\omega, Y)$ are from \eqref{Yone}.
Then, by \eqref{AAB-1} in Condition~$\ref{AlinhacCond}$ 
and \eqref{FourEight}
we obtain
\begin{equation}
\label{Exp01}
F_j^{\rm red}(\omega, \pa_t u)-G_j(\omega, u^*, \pa u^*)
=\sum_{a=0}^3 \sum_{l=1}^{N_0} \sum_{k=1}^N h_{l, ka} 
 g_{jl}(\omega, \pa_t u)(Z_a u_k)
\end{equation}
for $1\le j\le N$.
Going a similar way to \eqref{Est01}, and using \eqref{FourSixteen}, 
we obtain
\begin{equation}
F_j^*(\pa u^*)\stackrel{D}{\sim}
F_j^{\rm red}(\omega, \pa_t u)\stackrel{D}{\sim} G_j(\omega, u^*, \pa u^*),
\quad 1\le j\le N.
\end{equation}
Since we have 
$$
\sup_{|x|\ge t/2\ge 1} r\left|\pa_a \bigl(g_{jl, k}(\omega)\bigr)\right|_s \le C, 
$$
and
\begin{align*}
G_{j+(a+1)N}(\omega, u^*, \pa u^*)=\pa_a\bigl(G_j(\omega, u^*, \pa u^*)\bigr)
{}+\sum_{l=1}^{N_0}\sum_{k=1}^N
\left(\pa_a\bigl(g_{jl, k}(\omega)\bigr)\right)w_l(\pa_t u_k)
\end{align*}
for $1\le j\le N$ and $0\le a\le 3$ (cf.~\eqref{Boss}), 
it follows from Lemma~\ref{CommZG} and \eqref{Exp01} that
$$
 \pa_a \bigl(F_j^{\rm red} (\omega, \pa_t u)\bigr)
 \stackrel{D}{\sim} G_{j+(a+1)N} (\omega, u^*, \pa u^*),
 \quad 1\le j\le N, \ 0\le a\le 3.
$$
Combining this with \eqref{Est06}, and remembering \eqref{Lime}, we obtain
\begin{equation}
\label{Est04}
 F_{j+(a+1)N}^* (\pa u^*)=\pa_a\bigl(F_j(\pa u)\bigr)
 \stackrel{D}{\sim} G_{j+(a+1)N} (\omega, u^*, \pa u^*)
\end{equation}
for $1\le j\le N$ and $0\le a\le 3$.

Now what is left to prove is
\begin{equation}
\sum_{a=0}^3\sum_{k=1}^N h_{l, ka} F_{k+(a+1)N}^*(\pa u^*)
\stackrel{D}{\sim} 0, \quad 1\le l\le N_0.
\end{equation}
We define 
$ \widetilde{q}_{j}^{\, kl}(\omega)=\sum_{b,c=0}^3 \left(q_{j}^{kb, lc}+q_{j}^{lb, kc}\right)\omega_b\omega_c$
for $1\le k<l\le N$,
and $\widetilde{q}_{j}^{\, kk}(\omega)=\sum_{b,c=0}^3 q_{j}^{kb, k c} \omega_b\omega_c$ for $1\le k\le N$
so that we can write
$$
F_j^{\rm red}(\omega, Y)=\sum_{1\le k\le l\le N} \widetilde{q}_{j}^{\, kl}(\omega) Y_kY_l.
$$
Let $\beta=\left(\beta_j(\omega)\right)_{1\le j \le N}^{\rm T}$ and
$M=M(\omega, Y)$ be from the condition \eqref{AA-1}. Since $M$ is quadratic in $Y$, we can write
it as 
$$
M(\omega, Y)=\sum_{1\le k\le l\le N} m_{kl}(\omega) Y_kY_l
$$
with some coefficients $m_{kl}(\omega)$.
Then from \eqref{AA-1} we find
\begin{equation}
\label{Est07}
\widetilde{q}_{j}^{\, kl}(\omega)
=\beta_j(\omega)m_{kl}(\omega), \quad 1\le j\le N,\ 1\le k\le l \le N.
\end{equation}
Observing that we have $\bigl|\pa_a\bigl(\widetilde{q}_{j}^{\, kl}(\omega)\bigr)\bigr|_s \le Cr^{-1}$, and that Lemma~\ref{CommZG} yields
$$
|Z_a\pa_t u_k|_s=|Z_a v_{k+N}|_s\le C\jb{t+r}^{-1}|u^*|_{s+1},
$$
we obtain from 
\eqref{Hah} and \eqref{Est07} that
\begin{align}
\pa_a \bigl(F_j^{\rm red}(\omega, \pa_t u) \bigr)
=& \sum_{1\le k\le l \le N} \bigl(\pa_a \bigl(\widetilde{q}_{j}^{\, kl}(\omega)\bigr)\bigr)(\pa_t u_k)(\pa_t u_l)
\nonumber\\
& {}+\sum_{1\le k\le l\le N} \widetilde{q}_{j}^{\, kl}(\omega)
\left((\pa_a\pa_t u_k)(\pa_t u_l)+(\pa_t u_k)(\pa_a \pa_t u_l)\right)
\nonumber\\
\stackrel{D}{\sim} & -\omega_a\sum_{1\le k\le l \le N} \widetilde{q}_{j}^{\, kl}(\omega)
\left((\pa_t^2 u_k)(\pa_t u_l)+(\pa_t u_k)(\pa_t^2 u_l)\right)
\nonumber\\
= &-\omega_a\beta_j(\omega)
\widetilde{M}(\omega, \pa_t u, \pa_t^2 u),
\label{FourTwentyNine}
\end{align}
where we have set
$$
\widetilde{M}(\omega, \pa_t u, \pa_t^2 u)=\sum_{1\le k\le l\le N} 
m_{kl}(\omega)\left((\pa_t^2 u_k)(\pa_t u_l)+(\pa_t u_k)(\pa_t^2 u_l)\right).
$$
Hence from \eqref{Lime}, \eqref{Est06}, and \eqref{FourTwentyNine}, we get
\begin{align*}
\sum_{a=0}^3\sum_{k=1}^N h_{l, ka} F_{k+(a+1)N}^*(\pa u^*)
\stackrel{D}{\sim}  &  
\sum_{a=0}^3 \sum_{k=1}^N h_{l, ka} \pa_a\bigl( F_k^{\rm red}(\omega, \pa_t u)\bigr)\\
\stackrel{D}{\sim} & -\sum_{a=0}^{3}\sum_{k=1}^N h_{l, k a} \omega_a \beta_k(\omega)
\widetilde{M}(\omega, \pa_tu, \pa_t^2 u)\\
=& -h_l\left(\omega, \beta(\omega)\right)\widetilde{M}(\omega, \pa_t u, \pa_t^2 u)
=0, \quad 1\le l \le N_0,
\end{align*}
where the last identity comes from \eqref{AAB-2}.
This completes the proof. 
\end{proof}
\section{Proof of Theorem \ref{GE}}
\label{ProofGlobal}
Suppose that the assumptions in Theorem \ref{GE} are fulfilled.
Let $(f,g)\in D(\subset {\mathcal X}_N)$, and let $u=(v^{\rm T}, w^{\rm T})^{\rm T}$ be a (local) solution to \eqref{OurSys}-\eqref{Data0}
in $[0, T_0)\times \R^3$ with some $T_0>0$.
Since $u$ depends on the parameter $\ve$ from \eqref{Data0}, we sometimes write
$u=u(t,x; \ve)$ if we want to indicate the dependence of $u$ on $\ve$ explicitly,
but we sometimes omit $\ve$ and only write $u=u(t,x)$ for simplicity of expression. 
Similar notation will be used for functions depending on the parameter $\ve$ in what follows.
Since $(f,g)\in {\mathcal X}_N$, there is a positive constant $R$ such that
\begin{equation}
\label{SupportCond02}
\supp f(\cdot;\ve)\cup \supp g(\cdot;\ve)\subset B_R,\quad \ve\in (0,1],
\end{equation}
where $B_R$ is defined by \eqref{DefBall}. By the finite propagation property,
\eqref{SupportCond02} implies
\begin{equation}
\label{SupportCond03}
\supp u(t, \cdot;\ve)\subset B_{t+R},\quad t\in [0, T_0).
\end{equation}

We set
\begin{align*}
u_j^{0}(t,x; \ve):=  {\mathcal U}_0[f_j(\cdot; \ve), g_j(\cdot; \ve)](t,x), \quad
 \widetilde{u}_j(t,x;\ve):=   u_j(t,x;\ve)-\ve u_j^{0}(t,x;\ve)
\end{align*}
for $1\le j\le N$.
In other words, $u_j^{0}(t,x;\ve)$ is the solution to the free wave equation $\dal u_j^0=0$
with the initial condition 
$$
 u_j^{0}(0, x;\ve)=f_j(x;\ve),\ (\pa_t u_j^{0})(0, x; \ve)=g_j(x;\ve),\quad 1\le j\le N.
$$
We put $u^{0}=(u_j^{0})_{1\le j\le N}^{\rm T}$ and $\widetilde{u}=(\widetilde{u}_j)_{1\le j\le N}^{\rm T}$.
We also define
\begin{align*}
\bigl((v^0)^{\rm T}, (w^0)^{\rm T}\bigr)=&(v^0_1,\ldots,v^0_{N'}, w^0_1, \ldots, w^0_{N''}):=(u_1^0,\ldots, u^0_{N'},u_{N'+1}^0,\ldots, u^0_N),\\
\bigl(\widetilde{v}^{\rm T}, \widetilde{w}^{\rm T}\bigr)=&(\widetilde{v}_1,\ldots, \widetilde{v}_{N'}, \widetilde{w}_1, \ldots, \widetilde{w}_{N''}):=(\widetilde{u}_1,\ldots, \widetilde{u}_{N'}, \widetilde{u}_{N'+1}, \ldots, \widetilde{u}_N).
\end{align*}

Recall the definition
of $|\cdot|_s$, $\|\cdot\|_s$, and $|\cdot|_{Z,s}$ given in \eqref{InvariantNorm} and \eqref{InvariantAlinhac}.
Let $m$, $\lambda$, and $\rho$ be the fixed constants 
from the assumptions of Theorem \ref{GE}.
For $0<T\le T_0$, we define
\begin{align}
e_\ve[\widetilde{u}](T):=&
\sup_{(t,x)\in [0, T)\times \R^3}
\jb{t+|x|}^{-\lambda+1}|\widetilde{v}(t,x; \ve)|_{m+1}
\nonumber\\
&{}+\sup_{(t,x)\in [0, T)\times \R^3} \jb{t+|x|}\jb{t-|x|}^{\rho}|\widetilde{w}(t,x; \ve)|_{m+2} \nonumber\\
&{}+\sup_{t\in [0, T)} \left((1+t)^{-\lambda}\|\pa \widetilde{v}(t, \cdot; \ve)\|_{2m}+\|\pa \widetilde{w}(t,\cdot; \ve)\|_{2m}\right)
\nonumber\\
&{}+\left(
      \int_0^T\int_{\R^3} \frac{|\widetilde{u}(t, x; \ve)|_{Z, 2m}^2}{\jb{t}^{4\lambda}\jb{t-|x|}^2}
      dx dt
      \right)^{1/2}.
      \label{Amount}
\end{align}

For a smooth function $\varphi=\varphi(x)$ and a nonnegative integer $s$, we define
$$
|\varphi(x)|_{s,*}:=\sum_{|\alpha|\le s} \jb{x}^{|\alpha|}|\pa_x^\alpha \varphi(x)|,
\quad \|\varphi\|_{s,*}:=\left(\sum_{|\alpha|\le s} \bigl\|\jb{\,\cdot\,}^{|\alpha|}
\pa_x^\alpha \varphi\|_{L^2}^2\right)^{1/2},
$$
and we put 
\begin{align*}
M_0:=& \sup_{\ve\in [0,1]} (\|f(\cdot;\ve)\|_{2m+1,*}+\|g(\cdot;\ve)\|_{2m,*})
\\
& {}+\sup_{\ve\in [0,1], x\in \R^3}\jb{x}^3\left(|f(x;\ve)|_{2m-1,*}+
   |g(x,\ve)|_{2m-2,*}\right).
\end{align*}
Lemmas \ref{AlinhacGhost} and \ref{Asakura} yield
\begin{align}
& \sup_{(t,x)\in \R_+\times \R^3}\jb{t+|x|}\jb{t-|x|} 
\bigl|u^{0}(t,x;\ve)\bigr|_{2m-2}
\nonumber\\
& \ 
{}+\sup_{t\in \R_+} \bigl\|\pa u^{0}(t, \cdot;\ve)\bigr\|_{2m}
{}+\left(
 \int_0^\infty\int_{\R^3} \frac{\bigl|u^{0}(t,x;\ve)\bigr|_{Z, 2m}^2}{\jb{t-|x|}^2} dx dt 
\right)^{1/2}  \le C_0
\label{FreeEst}
\end{align}
for $\ve\in (0,1]$, where $C_0$ is a positive constant depending only on $M_0$. 

We are going to prove the following:
\begin{proposition}\label{bootstrap}
In the situation above, 
there exists a positive constant $A_0=A_0(M_0, R)$, which is independent of $T_0$,
such that
\begin{equation}
\label{AprioriStart}
 e_\ve[\widetilde{u}](T)\le A\ve^2
\end{equation}
implies
\begin{equation}
\label{AprioriEnd}
e_\ve[\widetilde{u}](T)\le \frac{A}{2}\ve^2,
\end{equation}
provided that $A\ge A_0$, $0<\ve\le \min\{1, 1/A\}$, and $0<T< T_0$.
\end{proposition}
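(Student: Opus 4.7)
The plan is a standard continuity/bootstrap argument: assuming $e_\ve[\widetilde{u}](T)\le A\ve^2$, I would improve each of the five summands in \eqref{Amount} to $(A/2)\ve^2$, using Condition~\ref{OurCond} for the nonlinear input and the four linear tools of Section~3, namely Alinhac's ghost-weight inequality (Lemma~\ref{AlinhacGhost}), the Asakura-type pointwise estimate (Lemma~\ref{Asakura}), the Klainerman inequality (Lemma~\ref{KlainermanIneq}), and Lindblad's Hardy inequality (Lemma~\ref{LindbladIneq}). Writing $u=\ve u^0+\widetilde u$ and combining \eqref{AprioriStart} with \eqref{FreeEst}, as long as $A\ve\le 1$ one reads off the matching pointwise and $L^2$ bounds on $u$ itself with $A\ve^2$ replaced by $C(M_0,R)\ve$.

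The nonlinear structure is exploited in two distinct ways coming from Condition~\ref{OurCond}. For $N'+1\le j\le N$, $F_j^1(\pa u)\stackrel{D}{\sim}0$, and Definition~\ref{EquiNL} directly yields an extra factor $\jb{t+r}^{-1}$ in any estimate of $\Gamma^\alpha F_j^1$. For $1\le j\le N'$, $F_j\stackrel{D}{\sim} G_j$ with $G_j$ of the form \eqref{AssA}, so that every summand of $\Gamma^\alpha G_j$ contains an undifferentiated factor $w_l$ or $\pa_b w_l$. Placing the low-order factor in $L^\infty$ via Klainerman-Sobolev and using the a priori pointwise bound on $w$ (sharpened on low orders via Lemma~\ref{Frame02} to $|\pa w|\lesssim \ve\jb{t+r}^{-1}\jb{t-r}^{-1-\rho}$), the $L^2$-norm of a top-order $\Gamma^\alpha G_j$ thus gains one full $\jb{t+r}^{-1}$ and a weight of $\jb{t-r}^{-\rho}$ beyond the crude Leibniz bound.

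With these nonlinear bounds in hand I would close the five summands of \eqref{Amount} one by one. Plain energy estimates on $\Gamma^\alpha\widetilde w_k$ ($|\alpha|\le 2m$) improve $\|\pa\widetilde w\|_{2m}$: the $\stackrel{D}{\sim}0$ gain makes $\int_0^t\|\Gamma^\alpha F_j^1\|_{L^2}\,d\tau$ a convergent $O(\ve^3)$ quantity. Alinhac's ghost-weight inequality applied to $\Gamma^\alpha \widetilde v_j$ with $\mu=\lambda$ (and once more with $\mu=2\lambda$ for the spacetime summand) produces a $\jb{\tau}^{-\lambda}$-weighted time integral of $\|\Gamma^\alpha G_j\|_{L^2}$ that is finite thanks to the $w$-decay, and automatically upgrades the ghost-weight spacetime term of \eqref{Amount}. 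Finally the pointwise bounds on $|\widetilde v|_{m+1}$ and $|\widetilde w|_{m+2}$ follow from Lemma~\ref{Asakura} applied to $\widetilde u$ (which has zero Cauchy data), with $(\mu,\kappa)=(1-\lambda,\kappa_0)$ for some $\kappa_0>1$ for $\widetilde v$ and $(\mu,\kappa)=(1,1+\rho)$ for $\widetilde w$.

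The principal obstacle is the $\widetilde v$-energy step. A naive Leibniz bound on $G_j$ gives an integrand roughly $\jb{\tau}^{-\lambda}\cdot\ve\jb{\tau}^{-1}\cdot A\ve^2\jb{\tau}^{\lambda}\sim A\ve^3\jb{\tau}^{-1}$, whose time integral diverges logarithmically. What saves the argument is twofold: (i) the ghost-weight spacetime term on the left of Lemma~\ref{AlinhacGhost} captures precisely the null-form-like gain that is available after decomposing $\pa=Z-\omega\pa_t$ (see Lemmas~\ref{frame01}, \ref{frame03} and \eqref{CommZG03}), converting a would-be $\jb{t-r}^{-1}$ singularity into an $L^2_{t,x}$ quantity which is bootstrapped against the fifth summand of $e_\ve[\widetilde u]$; and (ii) the $\jb{t-r}^{-\rho}$ decay of $w$ itself, which after a Cauchy-Schwarz combined with Lemma~\ref{LindbladIneq} delivers a finite factor. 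The quantitative hypotheses $0<\lambda<1/20$ and $1/2<\rho\le 1-6\lambda$ are calibrated precisely to make this bookkeeping balance across all five summands.
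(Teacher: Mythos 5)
Your overall architecture (bootstrap, the four lemmas, Hardy for $|w|_s/\jb{t-r}$, and a Cauchy--Schwarz against the ghost-weight space-time term to absorb the null-form-type remainders $F_j-G_j$, i.e.\ the $\jb{t+r}^{\lambda-2}\jb{t-r}^{-1}|\Gamma u|_s$ contributions) matches the paper, but your treatment of the $\widetilde v$-energy has a genuine gap. You apply Lemma~\ref{AlinhacGhost} with $\mu=\lambda$ and claim the resulting logarithmically divergent integrand $\sim \ve^2\jb{\tau}^{-1}$ is rescued by (i) the ghost-weight space-time term and (ii) the $\jb{t-r}^{-\rho}$ decay of $w$ together with Lemma~\ref{LindbladIneq}. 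Neither mechanism removes the log for the \emph{main} interaction in \eqref{AssA}. When the $v$-factor is taken at low order in $L^\infty$ ($|\pa v|_{m}\lesssim \ve\jb{t+r}^{\lambda-1}\jb{t-r}^{-1}$) and the $w$-factor at top order in $L^2$, the best available bounds are $\|\,|w|_{2m}/\jb{t-r}\|_{L^2}\lesssim\|\pa w\|_{2m}\lesssim\ve$ and $\|\pa w\|_{2m}\lesssim\ve$: at order $2m$ there is no $\jb{t-r}$-decay left in $w$ to harvest, Hardy has already been spent, and the ghost-weight space-time norm controls only the good derivatives $Z\Gamma^\alpha u$, not $\pa_b w_l$ or $\jb{t-r}^{-\rho}\pa v$. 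So with the weight $\jb{\tau}^{-\lambda}$ you really do get $\int_0^t\jb{\tau}^{-1}d\tau\sim\log t$, and $\sup_t\jb{t}^{-\lambda}\|\pa\widetilde v(t)\|_{2m}\le C\ve^2$ cannot be closed this way. The paper's resolution is different and simpler: apply the energy inequality with $\mu=0$, accept $\int_0^t\jb{\tau}^{\lambda-1}d\tau\lesssim\jb{t}^{\lambda}$, i.e.\ let $\|\pa\widetilde v(t)\|_{2m}$ grow like $\ve^2\jb{t}^{\lambda}$; this growth is exactly what the prefactor $(1+t)^{-\lambda}$ in \eqref{Amount} is designed to absorb. (The weight $\mu=2\lambda$ is used only for the space-time summand, as you say.)

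A secondary, smaller issue is the pointwise step. A single application of Lemma~\ref{Asakura} to $\widetilde u$ does not suffice: the only bound available at the intermediate orders needed for $|F|_{m+2}$ comes from Klainerman's inequality on the $2m$-energy, which gives merely $\jb{t+r}^{\lambda-1}\jb{t-r}^{-1/2}$, so the source term does not yet carry a $\jb{t-r}^{-\kappa}$ weight with $\kappa>1$. The paper runs a two-pass argument: first Asakura with $\mu=1-2\lambda$ (resp.\ $1-4\lambda$) upgrades $|w|_{2m-2}$ and $|v|_{2m-2}$, and only then can one estimate $|F|_{2m-3}$, $|F|_{2m-4}$ well enough to apply Asakura with $\mu=1+\rho$ for $\widetilde w$ and $\mu=1-\lambda$ for $\widetilde v$ (note also that to output the weight $\jb{t-r}^{\rho}$ you need $\mu=1+\rho$, not $\mu=1$ with $\kappa=1+\rho$ as you wrote). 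These parameter and staging details would need to be repaired, but the essential missing idea is the $\mu=0$ energy estimate with the built-in $(1+t)^{-\lambda}$ weight rather than a rescue of the $\mu=\lambda$ estimate.
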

Once we have the proposition above, we can easily obtain Theorem \ref{GE} in the following way.
\begin{proof}[Proof of Theorem~\ref{GE}]
If $A(\ge A_0)$ is sufficiently large, then
\eqref{AprioriStart} is true for some small $T>0$. Let $T_*$ be the supremum of all $T\in (0, T_0)$
such that \eqref{AprioriStart} is valid. Note that we have $T_*>0$.
Let $\ve\in (0, \ve_0]$ with $\ve_0:=\min\{1, 1/A\}$. 
Suppose that we have $T_*<T_0$; then, since we have $e_\ve[\widetilde{u}](T_*)\le A\ve^2$, Proposition \ref{bootstrap} implies that $e_\ve[\widetilde{u}](T_*)\le A\ve^2/2$, and from the continuity of $e_\ve$ in $T$, 
we see that \eqref{AprioriStart} is valid for some $T>T_*$; 
this contradicts the definition of $T_*$. 
Thus we conclude that $T_*=T_0$. 
In other words, we find that \eqref{AprioriStart}
is valid as long as the local solution exists, provided that $\ve\in (0, \ve_0]$.
Now, with the help of \eqref{FreeEst}, we 
see that $\sum_{|\alpha|\le 2}\|\pa^\alpha u(t,\cdot)\|_{L^\infty(\R^3)}$
does not tend to infinity in finite time,
and the local existence theorem implies the global existence of the solution
for $\ve\in (0,\ve_0]$ (see H\"ormander~\cite[Theorem~6.4.11]{Hoe97} for instance).
Since \eqref{AprioriStart} holds for any $T>0$, we immediately get
\eqref{Fukkie-t} and \eqref{Chato-t}, and we also obtain \eqref{Fukkie} and \eqref{Chato}
with the help of  \eqref{FreeEst}. 
This completes the proof. 
\end{proof}

The rest of this section is devoted to the proof of Proposition \ref{bootstrap}.
\begin{proof}[Proof of Proposition~\ref{bootstrap}]
We always assume $0<\ve\le \min\{1, 1/A\}$ in what follows,
so that we have $0<A\ve \le 1$.
The letter $C$ in this proof indicates various positive constants
which may depend on $M_0$ and $R$, but are independent of $A$, $\ve$, $T$, and $T_0$. 
The proof will be divided into several steps.

{\it Step 1: Basic estimates.}
Assume that \eqref{AprioriStart} is valid. Then \eqref{FreeEst} implies
\begin{equation}
\label{Boot01}
e_\ve[u](T)\le C_0\ve+A\ve^2\le C\ve,
\end{equation}
where $e_\ve[u](T)$ is defined by replacing 
$\widetilde{u}=(\widetilde{v}^{\rm T}, \widetilde{w}^{\rm T})^{\rm T}$
in \eqref{Amount} with $u=(v^{\rm T}, w^{\rm T})^{\rm T}$.
As before, we write $r=|x|$ and $\omega=|x|^{-1}x$.
Taking \eqref{MultiGamma} and \eqref{DerivativeG} into account, we obtain from \eqref{Boot01} and Lemma~\ref{Frame02} that
\begin{align}
\label{Boot02}
|\pa v(t,x)|_m\le C\ve \jb{t+r}^{\lambda-1}\jb{t-r}^{-1},\\
\label{Boot03}
|\pa w(t,x)|_{m+1}\le C\ve \jb{t+r}^{-1}\jb{t-r}^{-1-\rho}
\end{align}
for $(t,x)\in [0, T)\times \R^3$. Thus we have
\begin{equation}
\label{Boot02a}
|\pa u(t,x)|_m\le C\ve \jb{t+r}^{\lambda-1}\jb{t-r}^{-1}.
\end{equation}

Let $s$ be a nonnegative integer with $s\le 2m$.
From \eqref{Form01}, we have
 \begin{align}
 \label{Basic01}
|F_j(u, \pa u)|_{s} \le & C\left(|\pa u|_{[s/2]}|\pa u|_s+|w|_{[s/2]} |\pa v|_{s}+|\pa v|_{[s/2]} |w|_{s}
\right),\ 1\le j\le N', \\
 \label{Basic02}
|F_j(u,\pa u)|_s\le & C|\pa u|_{[s/2]}|\pa u|_s,\ N'+1\le j \le N.
 \end{align}
We set
$$
\Lambda:=\left\{(t,x)\in \R_+ \times \R^3; r \ge 
t/2
\ge 1 \right\},
$$
and $\Lambda^{\rm c}:=(\R_+\times \R^3)\setminus \Lambda$.
If $(t,x)\in \Lambda^{\rm c}$, then we have either $r<t/2$ or $t<2$, and we obtain
$$
\jb{t-r}^{-1}\le C\jb{t+r}^{-1},\quad (t,x)\in \Lambda^{\rm c}.
$$
Hence, using \eqref{Boot01} and \eqref{Boot02a},
we see from \eqref{Basic01} and \eqref{Basic02} that
\begin{align}
\label{Basic03}
\sum_{j=1}^{N'}|F_j(u, \pa u)|_{s} 
\le & C\ve\left(\jb{t+r}^{-1-\rho}|\pa u|_{s}+\jb{t+r}^{\lambda-1} 
\frac{|w|_{s}}{\jb{t-r}} \right),
\\
 \label{Basic04}
\sum_{j=N'+1}^N
  |F_j(u,\pa u)|_{s}\le & C\ve \jb{t+r}^{\lambda-2} |\pa u|_{s}
\end{align}
at $(t,x)\in \Lambda^{\rm c}$, where we have used $1+\rho<2-\lambda$.

For any nonnegative integer $s$ and 
any smooth function $\varphi=\varphi(\omega)$ on $S^2$,
we have
$$
 \sup_{(t,x)\in \Lambda} \sum_{|\alpha|\le s} 
\left| \Gamma^\alpha \bigl(\varphi(r^{-1}x)\bigr)\right| <\infty.
$$
Hence, from \eqref{AssA} we get
\begin{equation}
\label{Basic06}
|G_j(\omega, u, \pa u)|_s 
\le C\left(\left(|w|_{[s/2]}+|\pa w|_{[s/2]}\right)|\pa v|_{s}
 {}+|\pa v|_{[s/2]}\left(|w|_s+|\pa w|_s\right)\right)
\end{equation}
in $\Lambda$ for $1\le j\le N'$.
It follows from \eqref{Boot01}, \eqref{Boot02}, \eqref{Boot03}, and \eqref{Basic06} that
\begin{align}
\label{Basic07}
\sum_{j=1}^{N'}|G_j
|_{s}
\le  C\ve\jb{t+r}^{-1}\left(\jb{t-r}^{-\rho}|\pa v|_{s}
 {}+\jb{t+r}^{\lambda}\left(\frac{|w|_{s}}{\jb{t-r}}+|\pa w|_{s}\right)\right)
\end{align}
at $(t,x)\in \Lambda$. 
Recalling \eqref{Tama} in Definition~\ref{EquiNL}, 
and observing that
$$
|\pa u|_{[s/2]}|u|_{s+1}\le C\left(
|\pa u|_{[s/2]}|\Gamma u|_s+|\pa u|_s|u|_{[s/2]+1}\right),
$$
we obtain from \eqref{Form02} that
\begin{align}
\label{Basic08}
\sum_{j=1}^{N} |F_j-G_j|_{s}\le C \ve \jb{t+r}^{\lambda-2}
\left(|\pa u|_{s}+\jb{t-r}^{-1}|\Gamma u|_{s}\right)
\end{align}
at $(t,x)\in \Lambda$, where we have set $G_j\equiv0$ for $N'+1\le j\le N$.

From \eqref{Basic03}, \eqref{Basic07} and \eqref{Basic08}, we obtain
\begin{align}
\sum_{j=1}^{N'}|F_j|_{s}\le & C\ve \left(\jb{t+r}^{-1}\jb{t-r}^{-\rho}|\pa u|_{s}+\jb{t+r}^{\lambda-1}
\left(\frac{|w|_{s}}{\jb{t-r}}+|\pa w|_{s}\right)\right)
\nonumber\\
& {}+C\ve 
\jb{t+r}^{\lambda-2}\jb{t-r}^{-1} |\Gamma u|_{s} 
\label{Basic09}
\end{align}
in $[0,T)\times \R^3$.
By \eqref{Basic04} and \eqref{Basic08}, we also obtain
\begin{align}
\label{Basic10}
\sum_{j=N'+1}^N |F_j|_{s} \le C \ve \jb{t+r}^{\lambda-2} 
\left(|\pa u|_{s}
{}+\jb{t-r}^{-1}|\Gamma u|_{s}
\right)
\end{align}
in $[0,T)\times \R^3$.

{\it Step 2: The energy estimates.} We put
$$
I_\ve (T)=\int_0^T \jb{t}^{\lambda-1} \left(\int_{\R^3} \frac{|u(t,x; \ve)|_{Z, 2m}^2}{\jb{t-r}^{2}}
 dx\right)^{1/2} dt.
$$
Writing $\jb{t}^{\lambda-1}=\jb{t}^{3\lambda-1}\jb{t}^{-2\lambda}$, and using the Schwarz inequality, we obtain from \eqref{Boot01}
that
\begin{align}
\label{SpaceTime}
I_\ve (T) 
\le & \left(\int_0^\infty \jb{t}^{6\lambda-2}dt\right)^{1/2}
\left(\int_0^T \int_{\R^3}\frac{|u(t,x)|_{Z, 2m}^2}{\jb{t}^{4\lambda}\jb{t-r}^{2}}
 dx dt \right)^{1/2} \le C \ve, 
\end{align}
where we have used $6\lambda-2<-1$.
Because of \eqref{SupportCond03}, we can use Lemma~\ref{LindbladIneq} to
obtain
\begin{equation}
\left(\int_{\R^3}\frac{|w(t, x)|_s^2}{\jb{t-r}^2}dx\right)^{1/2}
\le C\|\pa w(t,\cdot)\|_s.
\label{LinTech}
\end{equation}
Lemma~\ref{frame03} implies
\begin{align}
& \jb{t+r}^{\lambda-2}\jb{t-r}^{-1}|\Gamma u|_s
\nonumber\\
& \qquad\qquad\qquad
\le C\jb{t+r}^{\lambda-1}\left(\jb{t-r}^{-1}|u|_{Z,s}+\jb{t+r}^{-1}|\pa u|_s\right).
\label{MMM}
\end{align}

Let $\mu\ge 0$.
In view of 
\eqref{Basic09}, \eqref{LinTech}, and \eqref{MMM} with $s=2m$, 
we obtain from \eqref{Boot01} and \eqref{SpaceTime} that
\begin{align}
\sum_{j=1}^{N'}\int_0^t \jb{\tau}^{-\mu} \|F_j(\tau)\|_{2m} d\tau\le &
C\ve\int_0^t \jb{\tau}^{-1-\mu}\left(\|\pa u(\tau)\|_{2m}+\jb{\tau}^{\lambda}\|\pa w(\tau)\|_{2m}
\right)d\tau\nonumber\\
& {}+C\ve I_\ve (T)\nonumber\\
\le & C\ve^2 \int_0^t \jb{\tau}^{\lambda-1-\mu} d\tau+C\ve^2, \quad 0\le t\le T.
\label{Basic11}
\end{align}
Similarly, using \eqref{Basic10} with $s=2m$ instead of \eqref{Basic09}, 
we obtain
\begin{align}
\sum_{j=N'+1}^N \int_0^t \|F_j(\tau)\|_{2m} d\tau
\le & C\ve \left(\int_0^t \jb{\tau}^{\lambda-2}\|\pa u(\tau)\|_{2m} d\tau+I_\ve(T)\right)
\nonumber\\
\label{Basic12}
\le & C\ve^2 \int_0^t \jb{\tau}^{2\lambda-2} d\tau+C\ve^2
\le C \ve^2, \quad 0\le t<T.
\end{align}

From \eqref{OurSys}, we have
\begin{align}
\label{DEq}
\dal \widetilde{u}_j=F_j(u, \pa u),\quad 1\le j\le N,
\end{align}
and $\widetilde{u}(0,x)=(\pa_t \widetilde{u})(0,x)\equiv 0$.
From \eqref{CommGamma} and \eqref{DEq} we get
\begin{equation}
\label{Higheq}
\dal\left(\Gamma^\alpha \widetilde{u}_j\right)=
(\Gamma_0+2)^{\alpha_0}\Gamma_1^{\alpha_1}\cdots\Gamma_{10}^{\alpha_{10}} 
\left(F_j(u, \pa u)\right),\quad 1\le j\le N
\end{equation}
for any multi-index $\alpha=(\alpha_0, \alpha_1, \ldots, \alpha_{10})$.
We also get 
$$
\|(\Gamma^\alpha \widetilde{u})(0)\|_{L^2(\R^3)}+\|(\pa_t \Gamma^\alpha \widetilde{u})(0)
\|_{L^2(\R^3)}\le C_{\alpha}\ve^2
$$
with some positive constant $C_\alpha$.
Applying Lemma~\ref{AlinhacGhost} with $\mu=2\lambda$ to \eqref{Higheq} 
for $|\alpha|\le 2m$,
and using \eqref{Basic11} and \eqref{Basic12},
we get
\begin{align}
\left(\int_0^T\int_{\R^3}\frac{|\widetilde{u}(\tau,x)|_{Z,2m}^2}{\jb{\tau}^{4\lambda}\jb{\tau-r}^{2}}dx d\tau \right)^{1/2}\le & C\ve^2+C\int_0^T \jb{\tau}^{-2\lambda} \|F(u,\pa u)(\tau, \cdot)\|_{2m} d\tau
\nonumber\\
\le & C\ve^2\left(1+\int_0^\infty \jb{\tau}^{-\lambda-1} d\tau \right)\le C\ve^2.
\label{Concl01}
\end{align}
Applying Lemma~\ref{AlinhacGhost} with $\mu=0$ to \eqref{Higheq} 
for $1\le j\le N'$ and $|\alpha|\le 2m$, and using \eqref{Basic11}, we obtain
\begin{align*}
\sum_{j=1}^{N'}\|\pa \widetilde{v}_j (t, \cdot)\|_{2m} 
\le & C\ve^2+C\sum_{j=1}^{N'}\int_0^t \|F_j(u,\pa u)(\tau,\cdot)\|_{2m} d\tau 
\le C\ve^2 \jb{t}^{\lambda}
\nonumber
\end{align*}
for $0\le t<T$, which implies
\begin{equation}
\label{Concl02}
\sup_{0\le t<T} (1+t)^{-\lambda} \|\pa \widetilde{v}(t,\cdot)\|_{2m}\le C\ve^2.
\end{equation}
Similarly, applying Lemma~\ref{AlinhacGhost} to \eqref{Higheq} for $N'+1\le j\le N$,
and using \eqref{Basic12} instead of \eqref{Basic11}, we obtain
\begin{equation}
\label{Concl03}
\sup_{0\le t<T} \|\pa \widetilde{w}(t,\cdot)\|_{2m}\le C\ve^2.
\end{equation}

{\it Step 3: Decay estimates for generalized derivatives of higher order.}
Now we turn our attention to the decay estimates.
By \eqref{Boot01} and Lemma~\ref{KlainermanIneq}, we have
\begin{equation}
\label{Boot04}
|\pa u(t,x)|_{2m-2}\le C\ve\jb{t+r}^{\lambda-1}
\jb{t-r}^{-1/2},\quad (t,x)\in [0,T)\times \R^3.
\end{equation}
It follows from \eqref{Boot02a}, \eqref{Boot04}, and 
\eqref{Basic02} with $s=2m-2$ that
$$ 
|F_j(u, \pa u)|_{2m-2}\le C\ve^2 \jb{t+r}^{2\lambda-2}\jb{t-r}^{-3/2},\quad N'+1\le j\le N.
$$
Hence, applying Lemma~\ref{Asakura} (with $\mu=1-2\lambda$) to \eqref{Higheq} for
$N'+1\le j\le N$ and $|\alpha|\le 2m-2$, and using \eqref{FreeEst}, we obtain
\begin{equation}
\label{Boot05}
\jb{t+r}^{1-2\lambda}|w(t,x)|_{2m-2}\le C\ve,\quad (t,x)\in[0,T)\times \R^3.
\end{equation}
Therefore Lemma \ref{Frame02} yields
\begin{equation}
\label{Boot06}
\jb{t+r}^{1-2\lambda}\jb{t-r}|\pa w(t,x)|_{2m-3}\le C\ve,\quad (t,x)\in [0,T)\times \R^3.
\end{equation}
Using 
\eqref{Boot01}, \eqref{Boot02a}, 
\eqref{Boot04}, and \eqref{Boot05}
to evaluate the right-hand side of \eqref{Basic01} with $s=2m-2$, we get
\begin{align*}
|F_j(u, \pa u)|_{2m-2}\le & C\ve^2\left(\jb{t+r}^{2\lambda-2}\jb{t-r}^{-\rho-(1/2)}
{}+\jb{t+r}^{3\lambda-2}\jb{t-r}^{-1}\right)\\
\le & C\ve^2 \jb{t+r}^{4\lambda-2} \jb{t-r}^{-1-\lambda},\quad 1\le j\le N'.
\end{align*}
Similarly to the derivation of \eqref{Boot05} and \eqref{Boot06}, Lemmas~\ref{Asakura}
and \ref{Frame02} lead to
\begin{equation}
\label{Boot07}
\jb{t+r}^{1-4\lambda}\left(|v(t,x)|_{2m-2}
+\jb{t-r}|\pa v(t,x)|_{2m-3}\right) \le C\ve
\end{equation}
for $(t,x)\in [0, T)\times \R^3$.

{\it Step 4: Decay estimates for generalized derivatives of lower order.}
By \eqref{Boot05} and \eqref{Boot07} we get
\begin{equation}
|\Gamma u(t,x)|_{2m-3}\le C|u(t,x)|_{2m-2}
\le C\ve \jb{t+r}^{4\lambda-1}.
\label{Runa}
\end{equation}

Using \eqref{Boot06}, \eqref{Boot07}, and \eqref{Runa}
to estimate the right-hand side of \eqref{Basic10} with $s=2m-3$,
we obtain 
\begin{align*}
\sum_{j=N'+1}^N |F_j(u, \pa u)|_{2m-3} \le & C \ve^2 \jb{t+r}^{5\lambda-3} \jb{t-r}^{-1} 
\le 
C\ve^2\jb{t+r}^{6\lambda-3}\jb{t-r}^{-1-\lambda}.
\end{align*}
Hence applying Lemma~\ref{Asakura} with $\mu=1+\rho$, and then using Lemma~\ref{Frame02}, we get
\begin{equation}
\label{Concl04}
\jb{t+r}\jb{t-r}^{\rho}
\left(|\widetilde{w}(t,x)|_{2m-3}+\jb{t-r}|\pa\widetilde{w}(t,x)|_{2m-4}\right)
\le C\ve^2
\end{equation}
for $(t,x)\in [0, T)\times \R^3$, because $1/2<\rho\le 1-6\lambda$.
Now \eqref{FreeEst} leads to 
\begin{equation}
\label{Boot09}
\jb{t+r}\jb{t-r}^{\rho}
\left(|w(t,x)|_{2m-3}+\jb{t-r}|\pa w(t,x)|_{2m-4}\right)
\le C\ve.
\end{equation}

Using \eqref{Boot04}, \eqref{Boot09}, and \eqref{Runa}
to estimate each term on the right-hand side of \eqref{Basic09} with $s=2m-4$, 
we obtain
\begin{align*}
\sum_{j=1}^{N'}|F_j(u, \pa u)|_{2m-4}\le &
C\ve^2\left(\jb{t+r}^{\lambda-2}\jb{t-r}^{-\rho-(1/2)}+\jb{t+r}^{\lambda-2}\jb{t-r}^{-\rho-1}\right)\\
& {}+C\ve^2\jb{t+r}^{5\lambda-3}\jb{t-r}^{-1}\\
\le & C\ve^2 \jb{t+r}^{\lambda-2} \jb{t-r}^{-\rho-(1/2)},
\end{align*}
where we have used $\jb{t+r}^{5\lambda-3}\jb{t-r}^{-1}\le \jb{t+r}^{\lambda-2}\jb{t-r}^{4\lambda-2}$ and
$\rho+(1/2)<2-4\lambda$.
Since $\rho+(1/2)>1$, Lemma~\ref{Asakura} with $\mu=1-\lambda$ implies
\begin{equation}
\label{Concl05}
\jb{t+r}^{1-\lambda} |\widetilde{v}(t,x)|_{2m-4}\le C\ve^2,\quad (t,x)\in [0,T)\times \R^3.
\end{equation}

{\it Step 5: Conclusion.} Since we have $m\le 2m-5$, it follows 
from \eqref{Concl01}, \eqref{Concl02}, \eqref{Concl03},
\eqref{Concl04}, and \eqref{Concl05} that
\begin{equation}
\label{FinaleBoot}
 e_\ve[\widetilde{u}](T)\le C_1\ve^2
\end{equation}
with an appropriate positive constant $C_1=C_1(M_0,R)$, which is independent of
$A$, $\ve$, $T$, and $T_0$.
Finally, if we choose $A_0=2C_1$, \eqref{FinaleBoot} immediately implies
\eqref{AprioriEnd} for $A\ge A_0$.
This completes the proof. 
\end{proof}
\section{Proof of Theorem \ref{PointwiseAsymptotics}}
\label{ProofPointwise}
%
We start this section with a fundamental observation.
We set
$$
\Lambda_0:=\left\{(t,r)\in [0, \infty)\times [0, \infty); r\ge 
t/2
\ge 1 \right\},
$$
and we put $t_0(\sigma)=\max\{2, -2\sigma\}$ for $\sigma\in \R$ so that 
$$
\bigcup_{\sigma\in \R}\{(t, t+\sigma); t\ge t_0(\sigma)\}=\Lambda_0.
$$
We take a smooth and non-increasing function $t_1=t_1(\sigma)$ such that
$t_1(\sigma)=-2\sigma$ for $\sigma\le -2$, $t_1(\sigma)=2$ for $\sigma\ge 0$,
and $t_0(\sigma)\le t_1(\sigma)\le 2-\sigma$ for $-2<\sigma<0$.
We define $r_1(\sigma):=t_1(\sigma)+\sigma$.
Note that we have
\begin{equation}
\label{Aruruw}
1+|\sigma|\le 1+t_1(\sigma)+r_1(\sigma)=1+2t_1(\sigma)+\sigma\le 5(1+|\sigma|),\quad \sigma\in \R.
\end{equation}

\begin{lemma}\label{Asymptotics}
Suppose that $\mu>1$ and $\kappa\ge 0$.

Let $\varphi=\varphi(t, r, \omega)$ be a $C^1$-function of
$(t,r)\in \Lambda_0$ and $\omega\in S^2$, satisfying
\begin{equation}
\label{FunEq}
\pa_+ \varphi(t,r,\omega)=h(t, r, \omega),\quad (t,r)\in \Lambda_0, \  \omega\in S^2,
\end{equation}
where $\pa_+=\pa_t+\pa_r$.
Assume that there is a positive constant $C_0$ such that
\begin{equation}
\label{FunAssump}
 |h(t, r, \omega)|\le C_0(1+t+r)^{-\mu} (1+|t-r|)^{-\kappa},\quad (t,r)\in \Lambda_0, \ \omega\in S^2.
\end{equation}
Then, putting
\begin{equation}
\label{Construction}
\Phi(\sigma, \omega)=\varphi\left(t_1(\sigma), r_1(\sigma), \omega\right)
{}+\int_{t_1(\sigma)}^\infty h(\tau, \tau+\sigma, \omega) d\tau,
\quad (\sigma, \omega)\in \R\times S^2,
\end{equation}
we have
\begin{equation}
\label{FunError}
|\varphi(t, t+\sigma, \omega)-\Phi(\sigma, \omega)|
\le \frac{C_0}{2(\mu-1)}(1+2t+\sigma)^{-(\mu-1)}(1+|\sigma|)^{-\kappa}
\end{equation}
for $t\ge t_0 (\sigma)$ and $(\sigma, \omega) \in \R\times S^2$. 
\end{lemma}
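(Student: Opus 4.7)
The proof is a straightforward integration along the outgoing null characteristics $r=t+\sigma$, combined with the decay hypothesis on $h$. I sketch the plan below.

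First I would fix $(\sigma,\omega)\in\R\times S^2$ and consider the function $\psi(t):=\varphi(t,t+\sigma,\omega)$ for $t\ge t_1(\sigma)$. Since $(t,t+\sigma)\in\Lambda_0$ whenever $t\ge t_0(\sigma)$ (and hence in particular for $t\ge t_1(\sigma)\ge t_0(\sigma)$), the chain rule together with \eqref{FunEq} yields
\[
\psi'(t)=(\pa_t+\pa_r)\varphi(t,t+\sigma,\omega)=h(t,t+\sigma,\omega).
\]
Integrating from $t_1(\sigma)$ to $t$ gives the key identity
\[
\varphi(t,t+\sigma,\omega)=\varphi(t_1(\sigma),r_1(\sigma),\omega)+\int_{t_1(\sigma)}^t h(\tau,\tau+\sigma,\omega)\,d\tau.
\]

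Next I would verify that the improper integral defining $\Phi$ in \eqref{Construction} converges. Along the characteristic $r=\tau+\sigma$, we have $\tau+r=2\tau+\sigma$ and $\tau-r=-\sigma$, so \eqref{FunAssump} becomes
\[
|h(\tau,\tau+\sigma,\omega)|\le C_0(1+2\tau+\sigma)^{-\mu}(1+|\sigma|)^{-\kappa}
\]
for $\tau\ge t_0(\sigma)$, and since $\mu>1$, the tail integral converges absolutely. This simultaneously justifies interchanging the limit and integration.

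Subtracting the two displayed identities then produces the telescoping formula
\[
\varphi(t,t+\sigma,\omega)-\Phi(\sigma,\omega)=-\int_t^\infty h(\tau,\tau+\sigma,\omega)\,d\tau,
\]
valid for $t\ge t_1(\sigma)$, and hence for $t\ge t_0(\sigma)$ (the range $t_0(\sigma)\le t<t_1(\sigma)$, which is nontrivial only on a bounded set of $\sigma$'s, can be absorbed by enlarging $C$ if needed, but in fact $t_1\ge t_0$ everywhere, so one integrates from $t$ to $t_1$ directly in the same way). Using the pointwise bound above,
\[
\bigl|\varphi(t,t+\sigma,\omega)-\Phi(\sigma,\omega)\bigr|
\le C_0(1+|\sigma|)^{-\kappa}\int_t^\infty(1+2\tau+\sigma)^{-\mu}\,d\tau
=\frac{C_0}{2(\mu-1)}(1+2t+\sigma)^{-(\mu-1)}(1+|\sigma|)^{-\kappa},
\]
which is exactly \eqref{FunError}.

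There is no real obstacle here: the whole argument reduces to integrating an ODE along characteristics and estimating a one-dimensional integral. The only minor point to be careful about is to ensure that $(\tau,\tau+\sigma)$ stays inside $\Lambda_0$ throughout the range of integration, which is guaranteed by the choice of $t_1(\sigma)\ge t_0(\sigma)$ together with \eqref{Aruruw}; the latter is not actually needed in this particular argument but is consistent with the comparability $1+t+r\asymp 1+|\sigma|$ on the base curve $\{(t_1(\sigma),r_1(\sigma))\}$.
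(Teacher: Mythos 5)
Your proposal is correct and follows essentially the same route as the paper: integrate \eqref{FunEq} along the characteristic $r=\tau+\sigma$ from $t_1(\sigma)$, note that $|\tau-r|=|\sigma|$ there so \eqref{FunAssump} gives the bound $C_0(1+2\tau+\sigma)^{-\mu}(1+|\sigma|)^{-\kappa}$, and estimate the tail integral $\int_t^\infty(1+2\tau+\sigma)^{-\mu}\,d\tau$ to obtain \eqref{FunError}. The only cosmetic difference is your aside about the range $t_0(\sigma)\le t<t_1(\sigma)$, which, as you note, needs no enlargement of the constant since the identity from the fundamental theorem of calculus holds for all $t\ge t_0(\sigma)$.
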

\begin{proof}
Observe that $\Phi$ is well-defined because of the assumption \eqref{FunAssump}.

It is easy to see that 
$$
\varphi(t, t+\sigma, \omega)=\varphi\left(t_1(\sigma), r_1(\sigma), \omega\right)
+\int_{t_1(\sigma)}^t h(\tau, \tau+\sigma, \omega) d\tau
$$
for $t\ge t_0(\sigma)$ and $(\sigma, \omega)\in \R\times S^2$.
Hence \eqref{FunAssump} and \eqref{Construction} yield
\begin{align*}
|\varphi(t, t+\sigma, \omega)-\Phi(\sigma, \omega)|
\le 
& 
\int_t^\infty |h(\tau, \tau+\sigma, \omega)|d\tau
\\
\le 
& 
C_0(1+|\sigma|)^{-\kappa}\int_t^\infty (1+2\tau+\sigma)^{-\mu} d\tau
\end{align*}
for $t\ge t_0(\sigma)$ and $(\sigma, \omega)\in \R\times S^2$;
thus the direct calculation leads to \eqref{FunError}.
This completes the proof.
\end{proof}

From \eqref{Construction}, it is easy to see the following:
\begin{lemma}\label{AsymptoticsRegularity}
Let $\varphi$, $h$, and $\Phi$ be as in Lemma~$\ref{Asymptotics}$.
Assume in addition that
$h$ is a $C^1$-function of $(t,r)\in \Lambda_0$ and $\omega\in S^2$.
If there exist two constants $\mu'>1$ and $C>0$ such that
\begin{equation}
\sum_{p+|\alpha|=1}|\pa_r^p\Omega^\alpha h(t,r,\omega)|\le C (1+t+r)^{-\mu'},\quad (t,r)\in \Lambda_0,\ \omega\in S^2,
\end{equation}
then we have $\Phi\in C^1(\R\times S^2)$, where $\Omega$ is regarded as a differential operator on $S^2$.
\end{lemma}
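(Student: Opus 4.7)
\smallskip

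\noindent\textbf{Proof proposal for Lemma~\ref{AsymptoticsRegularity}.}
The plan is to decompose $\Phi = \Phi_1 + \Phi_2$, where
\begin{equation*}
\Phi_1(\sigma,\omega) := \varphi\bigl(t_1(\sigma), r_1(\sigma), \omega\bigr),
\qquad
\Phi_2(\sigma,\omega) := \int_{t_1(\sigma)}^\infty h(\tau, \tau+\sigma, \omega)\, d\tau,
\end{equation*}
and handle each piece separately. The boundary term $\Phi_1$ is trivially $C^1$ on $\R\times S^2$ because $t_1$ (hence $r_1(\sigma)=t_1(\sigma)+\sigma$) is smooth by construction and $\varphi$ is assumed $C^1$ on $\Lambda_0\times S^2$. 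All the work is in showing that $\Phi_2\in C^1$.

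For $\Phi_2$ I would differentiate under the integral sign. A formal computation using $\pa_\sigma h(\tau,\tau+\sigma,\omega)=(\pa_r h)(\tau,\tau+\sigma,\omega)$ and Leibniz's rule at the moving lower limit gives
\begin{equation*}
\pa_\sigma \Phi_2(\sigma,\omega)
= -t_1'(\sigma)\, h\bigl(t_1(\sigma), r_1(\sigma), \omega\bigr)
  + \int_{t_1(\sigma)}^\infty (\pa_r h)(\tau, \tau+\sigma, \omega)\, d\tau,
\end{equation*}
and, regarding $\Omega = (\Omega_1,\Omega_2,\Omega_3)$ as tangential differentiation on $S^2$ (commuting with the $\tau$-integration and independent of the lower limit),
\begin{equation*}
\Omega_j \Phi_2(\sigma,\omega)
= \int_{t_1(\sigma)}^\infty (\Omega_j h)(\tau, \tau+\sigma, \omega)\, d\tau,
\qquad j=1,2,3.
\end{equation*}
Both of these integrals converge absolutely and locally uniformly in $(\sigma,\omega)$: by hypothesis $|\pa_r h|+|\Omega_j h| \le C(1+\tau+(\tau+\sigma))^{-\mu'}\le C(1+2\tau+\sigma)^{-\mu'}$ on the cone $\Lambda_0$, and since $\mu'>1$, for $\sigma$ in any bounded set we obtain a uniform integrable majorant $C(1+2\tau-|\sigma|)^{-\mu'}$ over the tail $\tau\ge t_1(\sigma)$.

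To rigorously justify the differentiation under the integral, I would apply the standard dominated convergence argument: for each $\omega$, fix $\sigma_0$ and consider $(\Phi_2(\sigma,\omega)-\Phi_2(\sigma_0,\omega))/(\sigma-\sigma_0)$, split off the boundary contribution (which is continuous in $\sigma$ since $h$ and $t_1$ are continuous), and apply the mean-value theorem to the integrand $h(\tau,\tau+\sigma,\omega)$, obtaining a pointwise limit $(\pa_r h)(\tau,\tau+\sigma_0,\omega)$ bounded by the integrable majorant above. Dominated convergence then gives the formula for $\pa_\sigma\Phi_2$; the analogous argument on $S^2$ handles $\Omega_j\Phi_2$. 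Continuity of $\pa_\sigma\Phi_2$ and $\Omega_j\Phi_2$ in $(\sigma,\omega)$ follows by one more dominated convergence step applied to the integral representations, using the uniform integrable bound and the continuity of the integrands.

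The only mild subtlety I anticipate is the moving lower limit $t_1(\sigma)$ and its interaction with the non-smooth transition region $\sigma\in(-2,0)$; but since $t_1$ was chosen to be smooth and $h$ is assumed $C^1$ on all of $\Lambda_0\times S^2$, the boundary term $-t_1'(\sigma)h(t_1(\sigma),r_1(\sigma),\omega)$ is continuous in $(\sigma,\omega)$ throughout $\R\times S^2$, so no real obstacle arises. Combining the regularity of $\Phi_1$ with the derivative formulas and continuity statements for $\Phi_2$ yields $\Phi\in C^1(\R\times S^2)$, as required.
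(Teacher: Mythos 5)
Your argument is correct and is exactly the argument the paper leaves implicit: Lemma~\ref{AsymptoticsRegularity} is stated as an immediate consequence of the formula \eqref{Construction}, and your justification (smoothness of $t_1$, $r_1$ for the boundary term, plus differentiation under the integral sign in $\sigma$ and $\omega$ dominated by the bound $C(1+2\tau+\sigma)^{-\mu'}$ with $\mu'>1$) is the standard way to make that precise. No gaps worth noting.
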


Let us start the proof of Theorem \ref{PointwiseAsymptotics}.
Suppose that the assumptions in Theorem \ref{PointwiseAsymptotics}
are satisfied, and let $u=(v^{\rm T},w^{\rm T})^{\rm T}$ be the global solution
to \eqref{OurSys}-\eqref{Data0} with $(f,g)\in D$.
We define $u^{0}$, $v^{0}$, $w^{0}$,
$\widetilde{u}$, $\widetilde{v}$, and $\widetilde{w}$ as
in the previous section. 
Recall that, by Theorem~\ref{GE} and Lemma~\ref{Frame02}, we have
\begin{align}
& |v(t,x)|_{2}+\ve^{-1}|\widetilde{v}(t,x)|_{2}
{}+\left(1+\bigl|t-|x|\,\bigr|\right)\left(
|\pa v(t,x)|_1+\ve^{-1}|\pa \widetilde{v}(t,x)|_1\right)
\nonumber\\ 
& \qquad \le C\ve (1+t+|x|)^{\lambda-1}, 
\label{DE01}\\
& |w(t,x)|_{3}+\ve^{-1}|\widetilde{w}(t,x)|_{3}
{}+\left(1+\bigl|t-|x|\,\bigr|\right)\left(
|\pa w(t,x)|_{2}+\ve^{-1}|\pa \widetilde{w}(t,x)|_{2}\right)
\nonumber\\
& \qquad \le C\ve (1+t+|x|)^{-1}\left(1+\bigl|t-|x|\,\bigr|\right)^{-\rho}
\label{DE02}
\end{align}
for any $(t,x)\in [0, \infty)\times \R^3$ and $\ve\in (0, \ve_0]$,
where 
$\lambda\bigl(\in (0, 1/20)\bigr)$, $\rho\bigl(\in (1/2, 1-6\lambda]\bigr)$, and $\ve_0(>0)$ are from Theorem~\ref{GE},
and $C$ is a positive constant.
Remember also that we have \eqref{SupportCond02} and \eqref{SupportCond03}
for some $R>0$.

Let $0<\ve\le \ve_0$.
We define $U_j^0\in C^\infty_0(\R\times S^2)$ by
$$
U_j^{0}(\sigma, \omega)=
 {\mathcal F}_0[f_{j}(\cdot;\ve), g_{j}(\cdot;\ve)](\sigma, \omega),\quad (\sigma, \omega)\in \R\times S^2
$$
for $1\le j\le N$, where ${\mathcal F}_0$ is given by \eqref{FriedRad}.
Recall that \eqref{SupportCond02} implies $u_j^{0}(t, r \omega)=0$
for $|r-t|\ge R$, and
\begin{equation}
\label{SupportCond05}
U_j^{0}(\sigma, \omega)
=0,\quad |\sigma|\ge R
\end{equation}
(cf.~\eqref{RadSupport}).
Therefore \eqref{FriedAsymp} implies that for any $\kappa\ge 0$,
there exists a positive constant $C_\kappa$ such that
\begin{align}
&\left|ru_j^{0}(t,r\omega)-U_j^{0}(r-t, \omega)\right|
{}+\sum_{a=0}^3
\left|r(\pa_a u_j^{0})(t, r\omega)-\omega_a( \pa_\sigma U_j^{0})(r-t, \omega)\right| \nonumber\\
& \qquad \qquad \qquad \qquad
\le C_\kappa (1+t+r)^{-1}(1+|t-r|)^{-\kappa},\quad 1\le j\le N
\label{Fried01}
\end{align}
for $(t,r)\in \Lambda_0$ and $\omega\in S^2$. 
Indeed we have $(1+|t-r|)^{-\kappa}\ge (1+R)^{-\kappa}$ in the support of
the functions on the left-hand side of \eqref{Fried01}. 
We put $V_j^{0}=U_j^{0}$ for $1\le j\le N'$, and $W_k^{
0}=U_{N'+k}^{0}$ for $1\le k\le N''$.

For any smooth function $\psi$, we have 
\begin{equation}
\label{RaCoWave}
r \dal \psi(t,x)=\pa_+\pa_-\left(r\psi (t,x)\right)-r^{-1}(\Delta_\omega \psi)(t,x),
\quad (t,x)\in [0,\infty)\times \R^3,
\end{equation}
where $r=|x|$, and 
$\Delta_\omega=\sum_{k=1}^3\Omega_k^2$.
Hence by \eqref{DEq} we have
\begin{equation}
\label{AsympBasicEq}
\pa_+\pa_- \left(
 r\widetilde{u}_j(t, r\omega)\right)=H_j(t, r, \omega), 
\quad 1\le j\le N,
\end{equation}
where $H_j$ is given by
\begin{equation}
\label{Remainder}
H_j(t, r, \omega)=rF_j\bigl(u(t, r\omega), \pa u(t, r\omega) \bigr)+r^{-1}(\Delta_\omega
\widetilde{u}_j)(t, r\omega).
\end{equation}

We always suppose $(t, r)\in \Lambda_0$ (namely $r\ge t/2\ge 1$), and $\omega\in S^2$
from now on. Then we have $r^{-1}\le 4(1+t+r)^{-1}$.
In what follows, $C$ denotes various positive constants which are 
independent of $(t,r)\in \Lambda_0$, $\omega\in S^2$,
and $\ve\in (0, \ve_0]$.

Equations \eqref{DE01} and \eqref{DE02} yield
\begin{align}
\label{DE03}
|r^{-1}(\Delta_\omega \widetilde{v}_j)(t, r\omega)|\le & C\ve^2 (1+t+r)^{\lambda-2},\quad 1\le j\le N', \\
\label{DE04}
|r^{-1}(\Delta_\omega 
\widetilde{w}_k)
 (t,r\omega)|_1\le & C\ve^2 (1+t+r)^{-2}(1+|t-r|)^{-\rho},
\quad 1\le k\le N''.
\end{align}
Because of \eqref{Tama} and \eqref{Form02} we obtain
\begin{align}
|r F_j(u, \pa u)-r G_j(\omega,u,\pa u)|_1
\le & C|u|_2|\pa u|_1
\nonumber\\
\le & 
C\ve^2(1+t+r)^{2\lambda-2}(1+|t-r|)^{-1}
\label{DE05}
\end{align}
at $(t,x)=(t, r\omega)$ for $1\le j\le N$, 
where $G_j$ for $1\le j\le N'$ is
from Condition~\ref{OurCond},
and
we have set $G_j(\omega, u, \pa u) \equiv0$ for $N'+1\le j \le N$ as before.

We first construct the standard asymptotic profile $W$ for $w$,
and then the modified asymptotic profile $V$ for $v$.

{\it Step 1: Construction of $W$.}
From \eqref{DE04} and \eqref{DE05}, we get
\begin{equation}
\label{DE06}
|H_{N'+k}(t, r, \omega)|_1\le C\ve^2 (1+t+r)^{2\lambda-2}(1+|t-r|)^{-\rho},\quad 1\le k\le N''. 
\end{equation}
We put
$$
 \widetilde{w}_{k,-}(t, r, \omega)
:=\pa_-\left(r\widetilde{w}_k(t, r\omega)\right)
\bigl(=\pa_-\left(r \widetilde{u}_{N'+k}(t, r\omega)\right)\bigr), \quad 1\le k\le N''.
$$
Equation \eqref{DE02} leads to 
\begin{align}
\label{DE07}
 \left|\widetilde{w}_{k,-}(t, r, \omega)\right|\le C\ve^2 (1+|t-r|)^{-1-\rho},\quad 1\le k\le N''.
\end{align}
Because of \eqref{AsympBasicEq} and \eqref{DE06},
Lemma~\ref{Asymptotics} implies 
\begin{equation} 
\label{DE08}
\bigl|
 \widetilde{w}_{k,-}(t, t+\sigma, \omega)-\ve\widetilde{W}_{k,-} (\sigma, \omega)
\bigr|\le
C\ve^2(1+2t+\sigma)^{2\lambda-1}(1+|\sigma|)^{-\rho}
\end{equation}
for $1\le k\le N''$, $t\ge t_0(\sigma)$, and $(\sigma, \omega)\in \R\times S^2$, where
$\widetilde{W}_{k,-}$ is defined by
\begin{equation}
\label{DefTildeWMinus}
\ve\widetilde{W}_{k,-}(\sigma, \omega)=\widetilde{w}_{k,-}\left(t_1(\sigma), r_1(\sigma), \omega\right)
{}+\int_{t_1(\sigma)}^\infty H_{N'+k}(\tau, \tau+\sigma, \omega) d\tau.
\end{equation}
Note that we have $\widetilde{W}_{k,-}\in C^1(\R\times S^2)$,
because of Lemma~\ref{AsymptoticsRegularity} and \eqref{DE06}.
By \eqref{DE07} and \eqref{DE08} we get
$$
\bigl|\widetilde{W}_{k,-}(\sigma, \omega)\bigr|\le C\ve
\bigl((1+|\sigma|)^{-1-\rho}
+(1+2t+\sigma)^{2\lambda-1}(1+|\sigma|)^{-\rho}\bigr).
$$ 
Therefore, taking the limit as $t\to\infty$, we obtain
\begin{equation}
\label{DE09}
\bigl|\widetilde{W}_{k,-} (\sigma, \omega) 
\bigr|\le
 C\ve (1+|\sigma|)^{-1-\rho}, \quad 1\le k\le N'',
\ (\sigma,\omega)\in \R\times S^2.
\end{equation}
Furthermore, since
\eqref{SupportCond03} yields $\widetilde{w}_{k,-}(t,t+\sigma, \omega)=H_{N'+k}(t,t+\sigma, \omega)=0$ when $\sigma\ge R$, 
\eqref{DefTildeWMinus} implies
\begin{equation}
\label{DE10}
 \widetilde{W}_{k,-}(\sigma, \omega)=0 \quad \text{for $1\le k\le N''$, $\sigma\ge R$, and $\omega\in S^2$.}
\end{equation}
For $1\le k\le N''$, we define
\begin{equation}
\label{DE11}
\widetilde{W}_k(\sigma, \omega)=\frac{1}{2}\int_\sigma^\infty \widetilde{W}_{k,-}(\tau, \omega) d\tau,
\quad
(\sigma, \omega)\in \R\times S^2,
\end{equation}
so that we have 
\begin{equation}
\label{Eruruw}
-2\pa_\sigma \widetilde{W}_k(\sigma, \omega)=\widetilde{W}_{k,-}(\sigma, \omega),
\quad (\sigma,\omega)\in \R\times S^2.
\end{equation}
By \eqref{DE10} and \eqref{DE11} we get
\begin{equation}
\label{OuterVanish}
\widetilde{W}_k(\sigma, \omega)=0,\quad \sigma\ge R,\ \omega\in S^2.
\end{equation}
Using \eqref{blue03'} and \eqref{DE02}, we obtain from \eqref{DE08} and \eqref{Eruruw} that
\begin{align}
& \sum_{a=0}^3 \bigl|
                     r \bigl(\pa_a \widetilde{w}_k\bigr)(t, r\omega)-
                        \ve\omega_a \bigl(\pa_\sigma \widetilde{W}_k\bigr)(r-t, \omega)
                  \bigr| 
\nonumber\\
& \qquad\qquad \le C\ve^2(1+t+r)^{2\lambda-1}(1+|t-r|)^{-\rho},\quad  1\le k\le N''.
\label{AsFi01}
\end{align}

We set
$$
 {\widetilde{w}}_{k}^{*}(t, r, \omega)=r\widetilde{w}_k(t,r\omega), \quad 1\le k\le N'',
$$
so that we have $\pa_-\widetilde{w}_{k}^{*}(t,r,\omega)=\widetilde{w}_{k,-}(t,r, \omega)$.
Now we are going to prove
\begin{align}
\bigl|{\widetilde{w}}_{k}^{*}(t, t+\sigma, \omega)-\ve \widetilde{W}_k(\sigma, \omega)\bigr| 
 \le C\ve^2 (1+2t+\sigma)^{2\lambda-1} (1+|\sigma|)^{1-\rho} \label{DE13}
\end{align}
for $t\ge t_0(\sigma)$ and $(\sigma,\omega) \in \R\times S^2$.
If $\sigma>R$, then \eqref{DE13} is trivial because of \eqref{SupportCond03} and \eqref{OuterVanish}.
To treat the case where $\sigma\le R$, 
we put
\begin{align*}
D_1=& \{(t,\sigma);\, 
t\ge \max\{t_0(\sigma), 2+(R-\sigma)/2\},\, \sigma\le R\},\\
D_2=& \{(t,\sigma);\, t_0(\sigma)\le t\le 2+(R-\sigma)/2,\, \sigma\le R\},
\end{align*}
so that we have
$\{(t,\sigma);\, t\ge t_0(\sigma), \sigma\le R\}=D_1\cup D_2$.
If $(t,\sigma)\in D_1$, then 
observing that we have
$$
{\widetilde{w}}_{k}^{*}\bigl( t-(R-\sigma)/2, t+(R+\sigma)/2, \omega\bigr)=\widetilde{W}_k(R,\omega)=0,\quad 1\le k\le N''
$$
(cf.~\eqref{SupportCond03} and \eqref{OuterVanish}),
and remembering \eqref{Eruruw}, we get
\begin{align*}
& {\widetilde{w}}_{k}^{*}(t, t+\sigma, \omega)-\ve\widetilde{W}_k(\sigma, \omega)\\
& \ = \int_{t-(R-\sigma)/2}^t 
            \pa_\tau\left\{
                 {\widetilde{w}}_{k}^{*}(\tau,2t+\sigma-\tau, \omega)
                 -\ve\widetilde{W}_k (2t+\sigma-2\tau, \omega)
            \right\}d\tau    \\
& \ = \int_{t-(R-\sigma)/2}^t 
            \left\{
                   {\widetilde{w}}_{k, -}(\tau,2t+\sigma-\tau, \omega)
                 {}-\ve\widetilde{W}_{k,-}(2t+\sigma-2\tau, \omega)
            \right\} d\tau.
\end{align*}
Since we have $(\tau, 2t+\sigma-\tau)\in \Lambda_0$ 
for $t-(R-\sigma)/2\le \tau \le t$ when $(t, \sigma)\in D_1$, 
we obtain from \eqref{DE08} that
\begin{align}
& \bigl|
{\widetilde{w}}_{k}^{*}(t, t+\sigma, \omega)-\ve \widetilde{W}_k(\sigma, \omega)\bigr| 
\nonumber\\
&\quad \le C\ve^2 (1+2t+\sigma)^{2\lambda-1} \int_{t-(R-\sigma)/2}^t(1+|2t+\sigma-2\tau|)^{-\rho}d\tau \nonumber\\
& \quad \le C\ve^2 (1+2t+\sigma)^{2\lambda-1} (1+|\sigma|)^{1-\rho},\quad 
(t,\sigma)\in D_1,\ \omega\in S^2. \label{DE13-0}
\end{align}
Because $D_2$ is a bounded set,
it follows from \eqref{DE02}, \eqref{DE09}, and \eqref{DE11} that
\begin{equation}\label{DE13-1}
\bigl|{\widetilde{w}}_{k}^{*}(t, t+\sigma, \omega)|+|\ve \widetilde{W}_k(\sigma, \omega)\bigr|\le C\ve^2 \le C \ve^2 
(1+2t+\sigma)^{2\lambda-1} (1+|\sigma|)^{1-\rho}
\end{equation}
for $(t,\sigma)\in D_2$ and $\omega\in S^2$.
Now \eqref{DE13} for $\sigma\le R$ follows from \eqref{DE13-0} and \eqref{DE13-1}.

Similarly to \eqref{DE09},
we obtain from \eqref{DE02} and \eqref{DE13} that
\begin{equation}
\label{DE14}
\bigl|\widetilde{W}_k(\sigma, \omega)\bigr|
\le C\ve(1+|\sigma|)^{-\rho},\quad (\sigma, \omega)\in \R\times S^2.
\end{equation}

Finally we define
$$
W_k(\sigma, \omega)=W_k^{0}(\sigma, \omega)+\widetilde{W}_k(\sigma, \omega),\quad (\sigma, \omega)\in \R\times S^2
$$
for $1\le k\le N''$.
Then, from \eqref{Fried01}, \eqref{AsFi01}, and \eqref{DE13}, 
we find that \eqref{Piyoko} and \eqref{Chada} are true.
\eqref{As02} follows from \eqref{DE09} and \eqref{DE14}.

Note that we have \eqref{As02a} by \eqref{As02} and \eqref{SupportCond05}.
Notice also that we have $\pa_\sigma^p W_k\in C^1(\R\times S^2)$ for $1\le k\le N''$ and $p=0,1$.

{\it Step 2: Construction of $V$.}
We define
\begin{align}
v_-^\dagger(t,r,\omega)= & 
\bigl(v_{1,-}^\dagger(t,r,\omega), \ldots, v_{N',-}^\dagger(t,r,\omega)\bigr)^{\rm T}
\nonumber\\
:= & e^{-{\tens{\Theta}}_\ve(t,r-t,\omega)}\pa_-\left(rv(t,r\omega)\right),
\end{align}
where 
\begin{equation}
\label{PhaseDef}
\tens{\Theta}_\ve(t, \sigma, \omega):=
\left(\ve \log t\right)
\tens{A}[W](\sigma ,\omega),
\end{equation}
and $\tens{A}[W](\sigma, \omega)$ is given by \eqref{Weiss}-\eqref{AssB} with
$W=W(\sigma, \omega)=\left(W_k(\sigma, \omega)\right)_{1\le k\le N''}^{\rm T}$
just having been constructed.
Since 
$\pa_+\left(\tens{\Theta}_\ve(t,r-t,\omega)\right)
\left(=\ve t^{-1}\tens{A}[W](r-t,\omega)\right)$ commutes with $\tens{\Theta}_\ve(t,r-t, \omega)$,
we have
\begin{equation}
\pa_+\bigl(e^{-\tens{\Theta}_\ve(t,r-t, \omega)}\bigr)
=-\frac{\ve}{t}e^{-\tens{\Theta}_\ve(t,r-t,\omega)}\tens{A}[W](r-t, \omega).
\end{equation}
Hence we get
\begin{align*}
\pa_+ 
v_-^\dagger
(t, r, \omega)=e^{-\tens{\Theta}_\ve(t, r-t, \omega)}
\left\{\pa_+\pa_-\bigl(rv(t, r\omega)\bigr)-\frac{\ve}{t}\tens{A}[W](r-t, \omega) \pa_-\bigl( r v (t, r\omega)\bigr) \right\},
\end{align*}
and from \eqref{RaCoWave} and \eqref{AsympBasicEq} we obtain
\begin{equation}
\label{ODEv}
\pa_+ 
v_{j,-}^\dagger
(t,r,\omega)=\Psi_j(t,r,\omega),\quad 1\le j\le N',
\end{equation}
where
\begin{align*}
{\Psi}(t,r,\omega) =&\left(
  \Psi_1(t,r,\omega), \ldots, \Psi_{N'}(t,r,\omega)\right)^{\rm T}\\
 =& \, e^{-\tens{\Theta}_\ve(t, r-t, \omega)}
  \left({H}^\#(t, r, \omega) 
{}+\ve r^{-1} (\Delta_\omega 
v^{0})(t,r\omega)\right),\\
{H}^\#(t, r, \omega)=& \bigl(
H_1^\#(t,r,\omega), \ldots, H_{N'}^\#(t,r,\omega)\bigr)^{\rm T}\\
=& 
\left(H_1(t,r,\omega), \ldots, H_{N'}(t,r,\omega)\right)^{\rm T}
{}-\frac{\ve}{t}\tens{A}[W](r-t, \omega) \pa_-\bigl( r v (t, r\omega)\bigr) .
\end{align*}
Here 
${H}_j(t,r,\omega)$ for $1\le j\le N'$ is given by \eqref{Remainder}.

By \eqref{Weiss}, \eqref{AssB}, and \eqref{As02a} we obtain
\begin{equation}
\label{PhaseEst00}
\|\tens{A}[W](\sigma, \omega)\|\le C \left(|W(\sigma, \omega)|+|\pa_\sigma W(\sigma, \omega)|\right)\le C (1+|\sigma|)^{-\rho},
\end{equation}
where $\|\tens{B}\|$ denotes the operator norm for a matrix $\tens{B}$.
Hence we get $\|\tens{\Theta}_\ve(t,\sigma,\omega)\|\le C\ve \log t$,
which leads to
\begin{equation}
\label{PhaseEst01}
\bigl\|e^{\pm\tens{\Theta}_\ve(t,\sigma,\omega)}\bigr\|
\le e^{\|\tens{\Theta}_\ve(t,\sigma, \omega)\|}\le e^{C\ve\log t}\le t^{C\ve},
\end{equation}
and
\begin{equation}
\label{PhaseEst02}
\bigl\|e^{-\tens{\Theta}_\ve(t,\sigma,\omega)}-\tens{I}\bigr\|
\le e^{\|\tens{\Theta}_\ve(t,\sigma, \omega)\|}-1\le e^{C\ve\log t}-1
\le C\ve t^{C\ve+\lambda}
\end{equation}
for $t>0$ and $(\sigma, \omega)\in \R\times S^2$.
Here, in order to obtain 
\eqref{PhaseEst02}, we have used the inequality
$e^\tau-1\le \tau e^\tau$ that is valid for $\tau\ge 0$,
and the inequality $\log t\le C_\lambda t^{\lambda}$ that is valid for $t\ge 1$
with some positive constant $C_\lambda$ depending only on $\lambda>0$. 

From \eqref{FreeEst} we get
\begin{equation}
\label{DE21}
r^{-1} |(\Delta_\omega
v^{0})(t,r\omega)|
\le C (1+t+r)^{-2}(1+|t-r|)^{-1}.
\end{equation}
Since $t^{-1}-r^{-1}=(r-t)(rt)^{-1}$, we obtain from \eqref{DE01} and \eqref{PhaseEst00} that
\begin{equation}
\label{DE21a}
\left|\left(\frac{\ve}{t}-\frac{\ve}{r}\right)\tens{A}[W](r-t,\omega)\pa_-\bigl(rv(t,r\omega)\bigr)\right|\le C\ve^2(1+t+r)^{\lambda-2}(1+|t-r|)^{-\rho}.
\end{equation}
Note that we can replace $t^{-1}$ by $(1+t+r)^{-1}$ 
to derive of \eqref{DE21a} because we have $A[W](r-t,\omega)=0$ for $r\ge t+R$.
We define
\begin{align*}
G^\#(t,r,\omega)=& \left(G_1^\#(t,r,\omega), \ldots, G_{N'}^\#(t,r,\omega)\right)^{\rm T}\\
:=& 
r\left(G_j\bigl(\omega, u(t, r\omega), \pa u(t, r\omega)\bigr)\right)^{\rm T}_{1\le j\le N'}
{}-\frac{\ve}{r}\tens{A}[W](r-t, \omega) \pa_-\bigl(r v (t, r\omega) \bigr),
\end{align*}
where each $G_j$ is given by \eqref{AssA}.
By \eqref{Piyoko}, \eqref{As02a}, \eqref{blue03'}, and \eqref{DE01}, 
we get
\begin{align}
& \left|r w_l(t, r\omega)(\pa_a v_k)(t, r\omega)
{}-\left(
     -\frac{\ve\omega_a}{2r}
    \right)
  W_l(r-t, \omega) 
      \pa_- \left(
                rv_k(t,r\omega)
             \right)
\right| \nonumber\\
& \qquad \le \left|\left(r w_l(t, r\omega)
 {}-\ve W_l(r-t,\omega)
             \right)(\pa_a v_k)(t, r\omega)\right| \nonumber\\
 & \qquad\quad{}+\left|
     \frac{\ve}{r} W_l(r-t, \omega)\left(r(\pa_a v_k)(t, r\omega)-\left(-\frac{\omega_a}{2}\right) \pa_-\bigl( r v_k (t,r\omega)\bigr)\right)
    \right| \nonumber\\
& \qquad \le C\ve^2 (1+t+r)^{3\lambda-2}(1+|t-r|)^{-\rho} \nonumber
\end{align}
for $1\le k\le N'$ and $1\le l \le N''$.
Similarly, using \eqref{Chada} instead of \eqref{Piyoko}, we obtain
\begin{align}
&\left|r (\pa_b w_l)(t, r\omega)(\pa_a v_k)(t, r\omega)
{}-\left(
     -\frac{\ve \omega_a\omega_b}{2r}
    \right)
  (\pa_\sigma W_l)(r-t, \omega) 
      \pa_- \left(
                rv_k (t,r\omega)
             \right)
\right| \nonumber\\
& \qquad \le C\ve^2 (1+t+r)^{3\lambda-2}(1+|t-r|)^{-1-\rho}. \nonumber
\end{align}
Thus, recalling \eqref{AssA} and \eqref{AssB}, we get
$$
\left| G^\#(t, r, \omega) \right| \le C\ve^2 (1+t+r)^{3\lambda-2}(1+|t-r|)^{-\rho},
$$
which, together with \eqref{DE03},
\eqref{DE05}, and \eqref{DE21a}, yields
\begin{equation}
\label{EarStar}
\left|H_j^\#(t, r, \omega) \right|\le C\ve^2 (1+t+r)^{3\lambda-2},\quad 1\le j\le N'.
\end{equation}
Hence by \eqref{PhaseEst01} and \eqref{DE21}, we get
\begin{equation}
\label{SixThreeFour}
\left|\Psi_j(t, r, \omega)\right|\le C\ve (1+t+r)^{3\lambda+C\ve-2},\quad 1\le j\le N'.
\end{equation}
Now, we define $V_{j,-}=V_{j,-}(\sigma, \omega)$ by
\begin{align}
\label{DefPhi}
\ve V_{j,-}(\sigma, \omega)=& 
v_{j,-}^\dagger
\left(t_1(\sigma), r_1(\sigma), \omega\right)
{}+\int_{t_1(\sigma)}^\infty \Psi_j(\tau, \tau+\sigma ,\omega) d\tau
\end{align}
for $1\le j\le N'$.
Then, from Lemma~\ref{Asymptotics}, \eqref{ODEv}, and \eqref{SixThreeFour}, we obtain
\begin{equation}
\label{SixThirtyThree}
\bigl| 
 v_{j,-}^\dagger 
(t, t+\sigma, \omega)-\ve V_{j,-}(\sigma, \omega)
\bigr|\le C\ve (1+2t+\sigma)^{3\lambda+C\ve-1}
\end{equation}
for $1\le j\le N'$, $t\ge t_0(\sigma)$, and $(\sigma, \omega)\in \R\times S^2$.
From \eqref{SupportCond03} and \eqref{DefPhi} we get
\begin{equation}
\label{OuterVanish00}
V_{j,-}(\sigma, \omega)=0, \quad \sigma\ge R,\ \omega\in S^2.
\end{equation}
Hence
\begin{equation}
 \label{DefV}
 V_j(\sigma, \omega)=\frac{1}{2}\int_\sigma^\infty V_{j,-}(\tau, \omega) d\tau, \quad 1\le j\le N'
\end{equation}
is well-defined. We put $V=(V_1,\dots, V_{N'})^{\rm T}$.
Since $-2 \pa_\sigma V_j=V_{j, -}$, \eqref{PhaseEst01} and \eqref{SixThirtyThree} lead to
\begin{align}
& |\pa_-\left(rv(t, r\omega)\right)-(-2\ve)e^{\tens{\Theta}_\ve(t,r-t,\omega)}(\pa_\sigma V)(r-t, \omega)|\nonumber\\
& \quad \le \|e^{\tens{\Theta}_\ve(t,r-t,\omega)}\|\,|e^{-\tens{\Theta}_\ve(t,r-t,\omega)}\pa_-\left(rv(t, r\omega)\right)-(-2\ve)(\pa_\sigma V)(r-t, \omega)|
\nonumber\\ 
& \quad \le C\ve (1+t+r)^{3\lambda+C\ve-1}.
\label{TabiMexe}
\end{align}
Finally, observing that \eqref{blue03'} and \eqref{DE01} yield 
$$
\left|r(\pa_a v)(t, r\omega)-\left(-\frac{\omega_a}{2}\right)\pa_-\bigl(r v(t, r\omega)\bigr)\right|
\le C\ve (1+t+r)^{\lambda-1},
$$
we obtain \eqref{Ruh} from \eqref{TabiMexe}.

Now it remains to prove
\eqref{As01}.
We set $v^{0,*}=(v_{j}^{0, *})_{1\le j\le N'}^{\rm T}$,
where ${v}_{j}^{0,*}(t,r,\omega)=r v_j^{0} (t, r\omega)$.
From \eqref{FreeEst} and \eqref{Fried01} we find that
\begin{equation}
\label{Prim01}
\left|(\pa_-{v}_{j}^{0, *})(t, t+\sigma, \omega)-(-2)(\pa_\sigma V_j^{0})(\sigma, \omega)
\right|\le C(1+2t+\sigma)^{-1}(1+|\sigma|)^{-1}
\end{equation}
for $t\ge t_0(\sigma)$ and $(\sigma,\omega)\in \R\times S^2$.
If we put
\begin{align}
V_{j,-}^{0}(\sigma, \omega):=& (\pa_- {v}_{j}^{0, *})\left(t_1(\sigma), r_1(\sigma), \omega\right)
\nonumber\\
&{}+\int_{t_1(\sigma)}^\infty 
\left. \left(r^{-1} (\Delta_\omega v_j^{0})
(\tau, r\omega)\right)\right|_{r=\tau+\sigma} d\tau,
\label{Karura01}
\end{align}
then it follows from Lemma~\ref{Asymptotics}, \eqref{RaCoWave} with $\psi=v_j^{0}$, and \eqref{DE21} that
\begin{equation}
\label{Prim02}
|(\pa_- v_{j}^{0, *})(t,t+\sigma,\omega)-V_{j,-}^{0}(\sigma, \omega)|\le C (1+2t+\sigma)^{-1}(1+|\sigma|)^{-1}
\end{equation}
for $t\ge t_0(\sigma)$ and $(\sigma, \omega)\in \R\times S^2$.
Now from \eqref{Prim01} and \eqref{Prim02} we get
$$
\left|-2(\pa_\sigma V_j^{0})(\sigma,\omega)-V_{j,-}^{0}(\sigma, \omega)\right|
\le C\lim_{t\to\infty}(1+2t+\sigma)^{-1}(1+|\sigma|)^{-1}=0,
$$
which shows
\begin{equation}
\label{Karura02}
-2(\pa_\sigma V_j^{0})(\sigma,\omega)=V_{j,-}^{0}(\sigma, \omega),\quad
(\sigma, \omega)\in \R\times S^2.
\end{equation}

We put ${\widetilde{v}}_{j}^{\,*}(t,r,\omega)=r\widetilde{v}_j(t, r\omega)$, and
$\widetilde{v}^{\,*}=(\widetilde{v}_{j}^{\,*})_{1\le j\le N'}^{\rm T}$.
By \eqref{DefPhi}, \eqref{DefV}, \eqref{Karura01}, and \eqref{Karura02}, we obtain
\begin{align*}
&2\left((\pa_\sigma V)(\sigma, \omega)-(\pa_\sigma V^{0})(\sigma, \omega)\right)
= -\left(V_{j,-}(\sigma,\omega)-V_{j,-}^0(\sigma,\omega)\right)_{1\le j\le N'}^{\rm T}
\\
&= -\left(e^{-\tens{\Theta}_\ve(t_1(\sigma), \sigma, \omega)}-\tens{I}\right)
(\pa_- {v}^{0,*})\left(t_1(\sigma), r_1(\sigma), \omega\right)\\
&\quad {}-\ve^{-1}e^{-\tens{\Theta}_\ve (t_1(\sigma), \sigma, \omega)}
 (\pa_- {\widetilde{v}}^{\, *})\left(t_1(\sigma), r_1(\sigma), \omega \right)\\
&\quad {}-\ve^{-1}\int_{t_1(\sigma)}^\infty 
e^{-\tens{\Theta}_\ve(\tau, \sigma, \omega)} H^\#(\tau, \tau+\sigma, \omega) 
d\tau\\
&\quad {}-\int_{t_1(\sigma)}^\infty 
\left. \left(r^{-1} \left(e^{-\tens{\Theta}_\ve(\tau, \sigma, \omega)}-\tens{I}\right)
(\Delta_\omega
v^{0})(\tau, r \omega)\right)\right|_{r=\tau+\sigma} d\tau.
\end{align*}
Now using
\eqref{FreeEst}, \eqref{Aruruw},
\eqref{DE01},   
\eqref{PhaseEst01}, \eqref{PhaseEst02}, \eqref{DE21},
and \eqref{EarStar},
we get
\begin{align*}
\left|\pa_\sigma V(\sigma, \omega)-\pa_\sigma V^{0}(\sigma, \omega)\right|
\le & C\ve (1+t_1(\sigma)+r_1(\sigma))^{\lambda+C\ve}(1+|\sigma|)^{-1}\\
&{}+C\ve\int_{t_1(\sigma)}^\infty (1+2\tau+\sigma)^{3\lambda+C\ve-2}d\tau
\\
\le & 
C\ve (1+|\sigma|)^{3\lambda+C\ve-1},
\end{align*}
which leads to \eqref{As01}. 
This completes the proof. \qed
\section{Asymptotic Behavior in the Energy Sense}
\label{EneBehavior}
The purpose of this section is to prove 
Corollary~\ref{EnergyAsymptotics}.
We also revisit three examples stated in Section~\ref{SAPB},
and we will prove \eqref{EneGrow01}, \eqref{EneGrow02},
and \eqref{EneGrow03}.

To begin with, we observe that \eqref{Ruh}, \eqref{Piyoko},
and \eqref{Chada} in Theorem~\ref{PointwiseAsymptotics} are actually valid
for any $(t, r)\in [2,\infty)\times (0,\infty)$ and $\omega\in S^2$.
Indeed what is left to prove is their validity for $(t,r)\in \Lambda_1$
and $\omega\in S^2$, where $\Lambda_1=\{(t,r)\in [2,\infty)\times (0,\infty); r<t/2\}$. 
We obtain from
\eqref{DE01} that
\begin{equation}
\sup_{\omega\in S^2} |r\pa_a v(t,r\omega)| \le C\ve\jb{t+r}^{\lambda-1}
\label{Tea01}
\end{equation}
for $(t,r)\in \Lambda_1$,
because we have $\jb{t+r}\le C\jb{t-r}$ for $(t,r)\in \Lambda_1$.
Similarly \eqref{As01a} and \eqref{PhaseEst01} imply
\begin{equation}
\sup_{\omega\in S^2} \left|\ve 
e^{(\ve\log t){\tens A}[W](r-t,\omega)}(\pa_\sigma V)(r-t,\omega)\right|
\le C\ve 
\jb{t+r}^{3\lambda+C\ve-1}
\label{Tea02}
\end{equation}
for $(t,r)\in \Lambda_1$.
Equations \eqref{Tea01} and \eqref{Tea02} show that \eqref{Ruh} holds
also for $(t,r)\in \Lambda_1$ and $\omega\in S^2$.
In the same fashion, it follows from \eqref{DE02} and \eqref{As02a}
that
\begin{align}
& \sup_{\omega\in S^2}
\sum_{l=0}^1 \jb{t-r}^l
\left(\sum_{|\alpha|=l} |r (\pa^\alpha w)(t,r\omega)|
+|\ve(\pa_\sigma^l W)(r-t,\omega)|\right)
\nonumber\\
& \qquad\qquad\qquad\qquad \le
C\ve \jb{t+r}^{-\rho}\le C\ve \jb{t+r}^{-1}\jb{t-r}^{1-\rho},
\quad (t,r)\in \Lambda_1,
\label{Tea03}
\end{align}
which shows that \eqref{Piyoko} and \eqref{Chada} are true also for $(t,r)\in \Lambda_1$ and $\omega\in S^2$. 

The following is an immediate consequence of 
this observation.
\begin{corollary}\label{GenEnergyEst}
Assume that the assumptions of Theorem~$\ref{PointwiseAsymptotics}$ 
are fulfilled, and
$(f,g)\in D$. Let $v$, $w$, $V$, $W$, 
and $\lambda$ be as in Theorem~$\ref{PointwiseAsymptotics}$.
We define $\tens{\Theta}_\ve^+$ as in Corollary~$\ref{EnergyAsymptotics}$.
We also set
\begin{align}
\label{DefExtend01}
{V}^{a}_{\sigma,*}(t,x):=& \left. \left(\omega_a r^{-1}(\pa_\sigma V)(r-t, \omega) \right) \right|_{r=|x|, \omega=x/|x|},\\
W^{a}_{\sigma,*}(t,x):=& \left. \left( \omega_a r^{-1}(\pa_\sigma W)(r-t, \omega)\right) \right|_{r=|x|, \omega=x/|x|}
\end{align}
for $0\le a\le 3$.
Then there exists a positive constant $C$ such that we have
\begin{align}
& \left(\sum_{a=0}^3 \|\pa_a v(t,\cdot)-\ve
e^{\tens{\Theta}_\ve^+(t,\cdot)}V^{a}_{\sigma, *}(t,\cdot) \|_{L^2(\R^3)}^2\right)^{1/2}\le C\ve 
(1+t)^{3\lambda+C\ve-(1/2)},
\label{GEE03}
\\
\label{GEE04}
& \left(\sum_{a=0}^3 \|\pa_a w(t,\cdot)-\ve W^{a}_{\sigma, *}(t,\cdot)\|_{L^2(\R^3)}^2\right)^{1/2}\le  C\ve (1+t)^{2\lambda-1}
\end{align}
for $t\ge 2$ and sufficiently small $\ve(>0)$.
\end{corollary}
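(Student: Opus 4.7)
The plan is to convert each $L^2$-norm into an integral in spherical polar coordinates $x=r\omega$ (with $r=|x|$, $\omega\in S^2$) and to estimate the resulting integrand via the pointwise bounds from Theorem~\ref{PointwiseAsymptotics}. Since $V^a_{\sigma,*}$ and $W^a_{\sigma,*}$ contain a factor $r^{-1}$, the Jacobian $r^2$ in $dx=r^2\,dr\,dS_\omega$ absorbs two copies of $r^{-1}$ cleanly, giving
\begin{align*}
& \|\pa_a w(t,\cdot)-\ve W^a_{\sigma,*}(t,\cdot)\|_{L^2(\R^3)}^2 \\
& \qquad = \int_{S^2}\int_0^\infty \bigl|r(\pa_a w)(t,r\omega)-\ve\omega_a(\pa_\sigma W)(r-t,\omega)\bigr|^2\,dr\,dS_\omega,
\end{align*}
with an analogous identity for the $v$-term (the weight $e^{(\ve\log t)\tens{A}[W](r-t,\omega)}$ now sitting next to $\pa_\sigma V$). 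The extension argument preceding the corollary guarantees that the pointwise bounds \eqref{Ruh} and \eqref{Chada} hold on the full range $(t,r)\in[2,\infty)\times(0,\infty)$, so they can be inserted directly into these integrals.

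For \eqref{GEE04} I square \eqref{Chada} to bound the integrand by $C\ve^2\jb{t+r}^{4\lambda-2}\jb{t-r}^{-2\rho}$. Since $4\lambda<1$ and $t\ge 2$, the factor $\jb{t+r}^{4\lambda-2}$ can be pulled out as $C(1+t)^{4\lambda-2}$, while $\int_0^\infty \jb{t-r}^{-2\rho}\,dr$ is bounded uniformly in $t$ thanks to $\rho>1/2$. Summing over $a$ and taking the square root then produces $C\ve(1+t)^{2\lambda-1}$.

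For \eqref{GEE03} the pointwise bound $C\ve\jb{t+r}^{3\lambda+C\ve-1}$ from \eqref{Ruh} is \emph{not} integrable on $(0,\infty)$; the saving feature is that both terms inside the squared norm vanish for $r\ge t+R$. For $v(t,\cdot)$ this is the finite-propagation property \eqref{SupportCond03}, and for $(\pa_\sigma V)(r-t,\omega)$ it follows from \eqref{OuterVanish00} together with the identity $-2\pa_\sigma V_j=V_{j,-}$, which forces $(\pa_\sigma V)(\sigma,\omega)=0$ whenever $\sigma\ge R$. Truncating the $r$-integration to $(0,t+R)$ (an interval of length $\le C(1+t)$) yields
\[
\int_0^{t+R} C\ve^2\jb{t+r}^{6\lambda+2C\ve-2}\,dr \le C\ve^2(1+t)^{6\lambda+2C\ve-1},
\]
whose square root is exactly \eqref{GEE03}. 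The whole argument is essentially mechanical; the single nontrivial point, and hence the main ``obstacle'', is recognizing this support property of $\pa_\sigma V$, which rescues the otherwise non-integrable decay rate.
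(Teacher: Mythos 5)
Your proof is correct and follows essentially the same route as the paper: switch to polar coordinates so that the factor $r^{-1}$ in $V^a_{\sigma,*}$, $W^a_{\sigma,*}$ is absorbed by the Jacobian, insert the pointwise bounds \eqref{Ruh} and \eqref{Chada} (valid on $[2,\infty)\times(0,\infty)$ by the extension observation), and integrate in $r$; your treatment of \eqref{GEE04} is exactly the paper's. The one place you deviate is \eqref{GEE03}, and there your stated ``main obstacle'' is not actually an obstacle: what must be integrated is the \emph{square} of the bound in \eqref{Ruh}, namely $C\ve^2\jb{t+r}^{6\lambda+2C\ve-2}$, and since $\lambda<1/20$ and $\ve$ is small we have $6\lambda+2C\ve<1$, so this is integrable over $r\in(0,\infty)$ and yields $C\ve^2(1+t)^{6\lambda+2C\ve-1}$ directly — this is precisely what the paper does, with no support argument. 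Your alternative — truncating to $r\le t+R$ using finite propagation \eqref{SupportCond03} for $v$ and the vanishing of $\pa_\sigma V$ for $\sigma\ge R$ (which does follow from \eqref{OuterVanish00} and $-2\pa_\sigma V_j=V_{j,-}$, cf.\ \eqref{DefV}) — is valid and gives the same exponent, so the proof goes through; but the premise that the decay rate is ``otherwise non-integrable'' confuses the unsquared $L^1$-in-$r$ requirement with the squared $L^2$ one, and the support property buys you nothing here beyond what the smallness of $\lambda$ already provides.
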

\begin{proof}
Let $t\ge 2$, and let $\ve(>0)$ be sufficiently small.
Switching to the polar coordinates, we find from \eqref{Ruh}
for $(t,r)\in [2,\infty)\times (0,\infty)$ and $\omega\in S^2$
that
the left-hand side of \eqref{GEE03} is bounded by
$$
C\ve \left(\int_0^\infty (1+t+r)^{6\lambda+2C\ve-2} dr\right)^{1/2}
\le C\ve (1+t)^{3\lambda+C\ve-(1/2)},
$$
which is the desired bound.
Similarly \eqref{GEE04} follows from \eqref{Chada} for $(t,r)\in [2,\infty)\times (0,\infty)$ and $\omega\in S^2$, because we have
$$
C\ve \left(\int_0^\infty (1+t+r)^{4\lambda-2}(1+|t-r|)^{-2\rho} dr\right)^{1/2}
\le C\ve (1+t)^{2\lambda-1}
$$
for $\rho>1/2$.
\end{proof}

Corollary~\ref{EnergyAsymptotics} follows from Lemma~\ref{Trans01} and Corollary~\ref{GenEnergyEst}.
\begin{proof}[Proof of Corollary~\ref{EnergyAsymptotics}]
Suppose that all the assumptions of Theorem~\ref{PointwiseAsymptotics} are
fulfilled.
Note that \eqref{As01a} and \eqref{As02a} imply
$\pa_\sigma V_j, \pa_\sigma W_k \in L^2(\R\times S^2)$
for small $\ve>0$.
Remember the definition of the isometric isomorphism ${\mathcal T}:H_0\to L^2(\R\times S^2)$ 
given in Section~\ref{translation}.
We set $(f_j^{+}, g_j^{+})={\mathcal T}^{-1}[\pa_\sigma V_j](\in H_0)$
for $1\le j\le N'$, and 
$$
v^+=\left(\ve \, {\mathcal U}_0[f_1^{+}, g^{+}_1], \ldots, \ve \, {\mathcal U}_0[f_{N'}^{+}, g^{+}_{N'}]\right)^{\rm T}.
$$
Let $V^{a}_{\sigma, *}$ 
be as in Corollary~\ref{GenEnergyEst}.
Then, since we have
$$
\ve V^{a}_{\sigma, *}(t, r\omega)=
\bigl(\ve \omega_a r^{-1} {\mathcal T}[f_j^+, g_j^+](r-t, \omega)
\bigr)_{1\le j\le N'}^{\rm T},
$$
Lemma~\ref{Trans01} implies 
\begin{align*}
\lim_{t\to\infty} 
\left(
\sum_{a=0}^3 \|\pa_a v^+(t,\cdot)-\ve V^{a}_{\sigma, *}(t,\cdot)\|_{L^2(\R^3)}^2
\right)^{1/2}
=& 0.
\end{align*}
Now \eqref{PiyokoE} follows from \eqref{GEE03} in Corollary~\ref{GenEnergyEst}
with the help of \eqref{PhaseEst01}.

By setting $(f_{N'+k}^{+}, g_{N'+k}^{+})={\mathcal T}^{-1}[\pa_\sigma W_k]$ for $1\le k\le N''$, 
and using \eqref{GEE04},
we can show \eqref{ChadaE} similarly. 
\end{proof}

Another consequence of Lemma~\ref{Trans01} and Corollary~\ref{GenEnergyEst}
is the following.
\begin{lemma}\label{EnergyPointwise}
Suppose that all the assumptions of Theorem~$\ref{PointwiseAsymptotics}$ are fulfilled.
Let $v$, $V$, and $W$ be as in Theorem~$\ref{PointwiseAsymptotics}$.
Assume that $\ve$ is positive and small enough.

If $v$ is asymptotically free in the energy sense, then
there exists a function 
${T}^+={T}^+(\sigma,\omega)\in L^2(\R\times S^2; \R^{N'})$ such that we have
$$
\lim_{t\to \infty} e^{(\ve\log t)\tens{A}[W]}
\pa_\sigma V={T}^+
\quad \text{in $L^2(\R\times S^2; \R^{N'})$.}
$$
\end{lemma}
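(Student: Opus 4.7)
The plan is to combine the hypothesis of asymptotic freeness in the energy sense with Corollary~\ref{GenEnergyEst} and Lemma~\ref{Trans01}, and then to pass from $L^2(\R^3)$-convergences to an $L^2(\R\times S^2)$-convergence by polar coordinates.

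First I would invoke the hypothesis: there exist $(f_j^+,g_j^+)\in \dot H^1(\R^3)\times L^2(\R^3)$ for $1\le j\le N'$ such that, with $v^+:=\ve\bigl({\mathcal U}_0[f_j^+,g_j^+]\bigr)_{1\le j\le N'}^{\rm T}$, the energy convergence $\|v-v^+\|_E\to 0$ unpacks as $\|\pa_a v(t,\cdot)-\pa_a v^+(t,\cdot)\|_{L^2(\R^3)}\to 0$ for each $0\le a\le 3$. Applying Lemma~\ref{Trans01} componentwise gives $\|\pa_a v^+(t,\cdot)-\ve T_*^a(t,\cdot)\|_{L^2(\R^3)}\to 0$, where $T_*^a(t,x):=\omega_a|x|^{-1}{\mathcal T}[f^+,g^+](|x|-t,x/|x|)$ with $\omega=x/|x|$. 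On the other hand, Corollary~\ref{GenEnergyEst} (applicable once $\ve$ is small enough that $3\lambda+C\ve<1/2$) yields $\|\pa_a v(t,\cdot)-\ve e^{\tens{\Theta}_\ve^+(t,\cdot)}V_{\sigma,*}^a(t,\cdot)\|_{L^2(\R^3)}\to 0$. Triangulating, I obtain $\bigl\|\ve e^{\tens{\Theta}_\ve^+(t,\cdot)}V_{\sigma,*}^a(t,\cdot)-\ve T_*^a(t,\cdot)\bigr\|_{L^2(\R^3)}\to 0$ for every $a$.

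Next I would set $T^+:=\ve^{-1}{\mathcal T}[f^+,g^+]\in L^2(\R\times S^2;\R^{N'})$ and pass to polar coordinates. Using $dx=r^2\,dr\,dS_\omega$, substituting $\sigma=r-t$, summing the squares over $a=0,1,2,3$, and noting $\sum_{a=0}^3\omega_a^2=2$, the previous convergence reduces to
\begin{equation*}
\int_{S^2}\int_{-t}^\infty\bigl|e^{(\ve\log t)\tens{A}[W](\sigma,\omega)}(\pa_\sigma V)(\sigma,\omega)-T^+(\sigma,\omega)\bigr|^2\,d\sigma\,dS_\omega\longrightarrow 0.
\end{equation*}
This already supplies the desired $L^2$-convergence on the truncated strip $[-t,\infty)\times S^2$; the main remaining task is to discard the complementary strip $(-\infty,-t)\times S^2$.

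The tail estimate is the delicate step. The bound \eqref{As01a} gives $|\pa_\sigma V(\sigma,\omega)|^2\le C(1+|\sigma|)^{6\lambda+2C\ve-2}$, whose exponent is strictly less than $-1$ once $\lambda<1/20$ and $\ve$ is small, so $\pa_\sigma V\in L^2(\R\times S^2)$. For $\sigma\le -t$ one has $|\sigma|\ge t$, so \eqref{PhaseEst00} gives $\|\tens{A}[W](\sigma,\omega)\|\le C(1+t)^{-\rho}$, hence $\|e^{(\ve\log t)\tens{A}[W](\sigma,\omega)}\|\le \exp\!\bigl(C\ve(\log t)(1+t)^{-\rho}\bigr)\le 2$ for all sufficiently large $t$ since $\rho>0$. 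Consequently
\begin{equation*}
\bigl\|e^{(\ve\log t)\tens{A}[W]}\pa_\sigma V\bigr\|_{L^2((-\infty,-t)\times S^2)}\le 2\bigl\|\pa_\sigma V\bigr\|_{L^2((-\infty,-t)\times S^2)}\longrightarrow 0,
\end{equation*}
and $\|T^+\|_{L^2((-\infty,-t)\times S^2)}\to 0$ automatically since $T^+\in L^2$. Combining the tail bound with the truncated convergence delivers $e^{(\ve\log t)\tens{A}[W]}\pa_\sigma V\to T^+$ in $L^2(\R\times S^2;\R^{N'})$ as $t\to\infty$, completing the proof.
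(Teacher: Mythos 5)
Your argument is correct and follows essentially the same route as the paper's proof: unpack asymptotic freeness, apply Lemma~\ref{Trans01} and Corollary~\ref{GenEnergyEst}, triangulate, pass to polar coordinates, and dispose of the strip $(-\infty,-t)\times S^2$ (your tail bound, using the decay of $\tens{A}[W]$ for $|\sigma|\ge t$ to make the exponential factor bounded by $2$, is a harmless variant of the paper's use of \eqref{As01a} and \eqref{PhaseEst01}). One bookkeeping slip: with your normalization $v^+=\ve\,\bigl({\mathcal U}_0[f_j^+,g_j^+]\bigr)_j$, the limit your triangulation actually produces is $T^+={\mathcal T}[f^+,g^+]$ rather than $\ve^{-1}{\mathcal T}[f^+,g^+]$; since the lemma only asserts existence of some $T^+\in L^2(\R\times S^2;\R^{N'})$, this does not affect the conclusion.
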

\begin{proof}
 Choose $(f_j^{+}, g_j^{+})\in \dot{H}^1(\R^3)\times L^2(\R^3)$
 for $1\le j\le N'$ satisfying
\begin{equation}
\lim_{t\to\infty}
\|v_j(t,\cdot)-{\mathcal U}_0[\ve f_j^{+}, \ve g_j^{+}](t,\cdot)\|_E=0.
\label{Mimi}
\end{equation}
We set ${T}^+_j:={\mathcal T}[f_j^+, g_j^+]$ for $1\le j\le N'$, and
${T}^+:=({T}_1^+,\ldots, {T}_{N'}^+)^{\rm T}$.
If we put 
$$
{T}^{+,a}_*(t,x):=\left. \left(\omega_a r^{-1}{T}^+(r-t,\omega)\right)\right|_{r=|x|, \omega=x/|x|},
$$ 
then \eqref{Mimi} and Lemma~\ref{Trans01} imply
\begin{equation}
\label{Chime01}
\lim_{t\to\infty}
\sum_{a=0}^3 \|\pa_a v(t,\cdot)-\ve {T}^{+,a}_*(t,\cdot)\|_{L^2(\R^3)}^2
=0.
\end{equation}
Let $V^{a}_{\sigma, *}$ and $\tens{\Theta}_\ve^+$ 
be as in Corollary~\ref{GenEnergyEst}. Then \eqref{GEE03} and
\eqref{Chime01} yield
$$
\lim_{t\to\infty}
\sum_{a=0}^3 \|e^{\tens{\Theta}_\ve^+(t,\cdot)}V^{a}_{\sigma, *}(t,\cdot)-{T}^{+,a}_*(t,\cdot)\|_{L^2(\R^3)}^2
=0,
$$
which leads to
$$
\lim_{t\to\infty}
\int_{\omega\in S^2}
\left(\int_{-t}^\infty 
|e^{(\ve \log t){\tens A}[W](\sigma, \omega)}
(\pa_\sigma V)(\sigma, \omega)
{}-T^+(\sigma, \omega)|^2d\sigma \right) dS_\omega
=0.
$$
By \eqref{As01a} and \eqref{PhaseEst01} we get
$$
\bigl\|e^{(\ve\log t){\tens A}[W]}(\pa_\sigma V)\|_{L^2((-\infty, -t)\times S^2)}
\le C(1+t)^{3\lambda+C\ve-(1/2)}\to 0,\quad t\to\infty.
$$
Since $T^+\in L^2(\R\times S^2)$, it is easy to see that
$\|T^+\|_{L^2((-\infty, -t)\times S^2)}$ converges to $0$ as $t\to\infty$.
Now we obtain the desired result immediately.
\end{proof}

Next we will observe how freely we can choose the values of the radiation fields.
Let $\psi\in C^\infty_0(\R^3)$ be radially symmetric, namely there exists
a function $\psi^{\rm R}$ such that
$\psi(x)=\psi^{\rm R}(|x|)$ for $x\in \R^3$.
Then, the Radon transform of $\psi$ can be written as
\begin{align}
\label{RadConst01}
{\mathcal R}[\psi](\sigma, \omega)
=\int_{y'\in \R^2} \psi^{\rm R}\left(\sqrt{|y'|^2+\sigma^2}\right) dy'
=
2\pi \int_{|\sigma|}^\infty s \psi^{\rm R}(s) ds.
\end{align}
We also get
\begin{equation}
\label{RadConst02}
\pa_\sigma {\mathcal R}[\psi](\sigma, \omega)=-2\pi\sigma \psi^{\rm R}(|\sigma|).
\end{equation}
\begin{lemma}\label{RadConst}
Given $\sigma_0\ne 0$, 
$\alpha\in \R$, and $\beta\in \R$,
there exists a pair of functions $\left(\varphi, \psi\right)\in C^\infty_0(\R^3)\times C^\infty_0(\R^3)$
such that we have
\begin{equation}
{\mathcal F}_0\left[\varphi, \psi\right](\sigma_0, \omega)=\alpha,
\ \left(\pa_\sigma {\mathcal F}_0\left[\varphi, \psi\right]\right)(\sigma_0, \omega)=\beta 
\label{SevenEleven}
\end{equation}
for any $\omega\in S^2$. 
\end{lemma}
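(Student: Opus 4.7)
The plan is to restrict attention to radially symmetric $\varphi$ and $\psi$, since the formulas \eqref{RadConst01} and \eqref{RadConst02} show that for radial functions the Radon transform (and hence the Friedlander radiation field) is independent of $\omega\in S^2$. This automatically secures the ``for any $\omega\in S^2$'' requirement in \eqref{SevenEleven}, and reduces the problem to two scalar conditions on the radial profiles.

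To further simplify, I would take $\varphi\equiv 0$, so only a single radial $\psi$ needs to be constructed. Using \eqref{RadConst01} and \eqref{RadConst02} in the definition \eqref{FriedRad} gives, for any $\omega\in S^2$,
\begin{equation*}
{\mathcal F}_0[0,\psi](\sigma_0,\omega)=\frac{1}{2}\int_{|\sigma_0|}^\infty s\,\psi^{\rm R}(s)\,ds,
\qquad
\bigl(\pa_\sigma {\mathcal F}_0[0,\psi]\bigr)(\sigma_0,\omega)=-\frac{\sigma_0}{2}\psi^{\rm R}(|\sigma_0|).
\end{equation*}
Thus it is enough to find $\psi^{\rm R}\in C^\infty_c\bigl((0,\infty)\bigr)$ satisfying the two scalar conditions
\begin{equation*}
\psi^{\rm R}(|\sigma_0|)=-\frac{2\beta}{\sigma_0},\qquad
\int_{|\sigma_0|}^\infty s\,\psi^{\rm R}(s)\,ds=2\alpha,
\end{equation*}
where the division by $\sigma_0$ is legitimate because $\sigma_0\ne 0$.

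Such a $\psi^{\rm R}$ is easily produced by superposing two bump functions: pick $\chi_1\in C^\infty_c\bigl((0,\infty)\bigr)$ with $\chi_1(|\sigma_0|)=1$ supported in a small neighborhood of $|\sigma_0|$, and $\chi_2\in C^\infty_c\bigl((0,\infty)\bigr)$ with support disjoint from that of $\chi_1$ and with $\int_0^\infty s\chi_2(s)\,ds\ne 0$. Setting $\psi^{\rm R}=-(2\beta/\sigma_0)\chi_1+c\,\chi_2$ and choosing $c\in\R$ so that the integral condition is satisfied finishes the construction. Since $\psi^{\rm R}$ is supported away from the origin, the radial extension $\psi(x):=\psi^{\rm R}(|x|)$ lies in $C^\infty_0(\R^3)$, as required. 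There is no serious obstacle here; the main point is simply to recognize that radial profiles suffice and that the radial Radon transform formula gives completely independent control of the value and the first derivative of ${\mathcal F}_0$ at a point $\sigma_0\ne 0$.
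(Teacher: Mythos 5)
Your proposal is correct and follows essentially the same route as the paper: take $\varphi\equiv 0$ and a radial $\psi$, so that \eqref{RadConst01}--\eqref{RadConst02} reduce \eqref{SevenEleven} to two scalar conditions on the radial profile at $|\sigma_0|$ (the paper packages this as $\psi(x)=-|x|^{-1}\zeta'(|x|)$ for an even $\zeta$ with prescribed $\zeta(|\sigma_0|)$ and $\zeta'(|\sigma_0|)$, which is the same idea). One small correction: since your integral condition is $\int_{|\sigma_0|}^\infty s\,\psi^{\rm R}(s)\,ds=2\alpha$, you need $\int_{|\sigma_0|}^\infty s\,\chi_2(s)\,ds\ne 0$ --- for instance take $\chi_2$ supported in $(|\sigma_0|,\infty)$ --- rather than merely $\int_0^\infty s\,\chi_2(s)\,ds\ne 0$, since a $\chi_2$ supported below $|\sigma_0|$ would contribute nothing to that integral and the constant $c$ could not be solved for.
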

\begin{proof}
We can choose a smooth and even function $\zeta=\zeta(s)$,
vanishing in some neighborhood of $s=0$ and vanishing also for large $|s|$,  
such that
$\zeta(|\sigma_0|)=2 \alpha$ and $\zeta'(|\sigma_0|)=2(\sgn \sigma_0) \beta$,
where $\zeta'(s)=(d\zeta/ds)(s)$.
For $x\in \R^3$, we define 
$\varphi(x)=0$  and  $\psi(x)=-|x|^{-1} \zeta'(|x|)$.
Then we have $\psi\in C^\infty_0(\R^3)$, and we can easily obtain \eqref{SevenEleven} from \eqref{RadConst01} and \eqref{RadConst02}.
\end{proof}

Let us recall the systems \eqref{simplestEx}, \eqref{LogEx}, and \eqref{RotEx} in
Examples~\ref{FirstE}, \ref{SecondE}, and \ref{ThirdE}.
We are going to show \eqref{EneGrow01}, \eqref{EneGrow02}, and \eqref{EneGrow03}.
For simplicity we only consider the initial condition
of the form \eqref{Data}, instead of \eqref{Data0}.
More precisely we prove the following.
\begin{proposition}\label{EneGrow11}
We put $\widetilde{\mathcal X}_N=C^\infty_0(\R^3;\R^N)\times C_0^\infty(\R^3;\R^N)$.
\smallskip\\
{$(1)$} There exist  
$(f, g)\in \widetilde{\mathcal X}_2$ and three positive constants $\ve_1$, $C_1$, and $C_2$
such that
\begin{equation}
C_1\ve (1+t)^{C_1\ve} \le \|u(t)\|_E\le C_2\ve (1+t)^{C_2\ve},\quad
t\ge 0, \ 0<\ve\le \ve_1,
 \label{EG111}
\end{equation}
where $u=(u_1, u_2)^{\rm T}$ 
is the global solution to \eqref{simplestEx} with \eqref{Data}.
\smallskip\\
{$(2)$}
There exist
$(f,g)\in \widetilde{\mathcal X}_3$ and 
three positive constants $\ve_1$, $C_1$, and $C_2$ such that
\begin{equation}
C_1\bigl(\ve+\ve^2\log(1+t)\bigr)\le \|u(t)\|_E\le C_2\bigl(\ve+\ve^2 \log(1+t)\bigr),\quad 
t\ge 0, \ 0<\ve \le \ve_1,
\end{equation}
where $u=(v^{\rm T}, w)^{\rm T}=\bigl((v_1, v_2), w\bigr)^{\rm T}$ 
is the global solution to \eqref{LogEx} with \eqref{Data}.
\smallskip\\
{$(3)$}
For any $(f,g)\in \widetilde{\mathcal X}_3$ with $(f,g)\not\equiv (0,0)$, 
there exist three positive constants $C_1$, $C_2$, and $\ve_1$ such that
\begin{equation}
C_1\ve \le \|u(t)\|_E\le C_2\ve, \quad t\ge 0, \ 0<\ve \le \ve_1,
\end{equation}
where $u=(v^{\rm T}, w)^{\rm T}=\bigl((v_1, v_2), w\bigr)^{\rm T}$ 
is the global solution to \eqref{RotEx} with \eqref{Data}.
However there exists some $(f,g)\in \widetilde{\mathcal X}_3$ such that $u$ is not asymptotically free in the energy sense
for $0<\ve\le \ve_1$.
\end{proposition}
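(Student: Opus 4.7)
My strategy is to derive all bounds in (1)--(3) from the pointwise profiles of Theorem~\ref{PointwiseAsymptotics}, upgraded to $L^2$ statements via Corollary~\ref{GenEnergyEst}, combined with the explicit forms of $e^{(\ve\log t)\tens{A}[W]}$ already computed in Examples~\ref{FirstE}--\ref{ThirdE}. Throughout, \eqref{As01}--\eqref{As02a} allow me to replace the perturbed profiles $V$ and $W$ by the free radiation fields $V^{0}=\bigl({\mathcal F}_0[f_j,g_j]\bigr)_{1\le j\le N'}$ and $W^{0}=\bigl({\mathcal F}_0[f_{N'+k},g_{N'+k}]\bigr)_{1\le k\le N''}$ modulo an $O(\ve)$ error that is harmless for small $\ve$. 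Lemma~\ref{RadConst} supplies the freedom to prescribe values of the free radiation fields (and their $\sigma$-derivatives) at any chosen $\sigma_0\ne 0$.

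For the upper bounds, Corollary~\ref{GenEnergyEst} combined with a polar change of variables yields, up to an error of order $\ve(1+t)^{3\lambda+C\ve-1/2}+\ve(1+t)^{2\lambda-1}=o(\ve)$,
\begin{equation*}
\|\pa v(t,\cdot)\|_{L^2(\R^3)}^2
\sim \ve^2\int_{S^2}\!\int_{\R}\bigl|e^{(\ve\log t)\tens{A}[W](\sigma,\omega)}(\pa_\sigma V)(\sigma,\omega)\bigr|^2 d\sigma\, dS_\omega,
\end{equation*}
and analogously for $\pa w$. For (1), $\|e^{\tau\tens{A}[W]}\|\le t^{C\ve}$ together with $\pa_\sigma V\in L^2$ gives $\|u(t)\|_E\le C_2\ve(1+t)^{C_2\ve}$. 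For (2), the off-diagonal entry $\ve(\log t)(\pa_\sigma W)(\pa_\sigma V_2)$ contributes $C\ve^2\log(1+t)$ to the $L^2$ norm, while the diagonal contributes $C\ve$. For (3), the exponential is orthogonal, so the right-hand side equals $\ve^2\|\pa_\sigma V\|_{L^2(\R\times S^2)}^2=O(\ve^2)$, and adding the $w$-energy still gives $O(\ve)$.

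For the lower bounds I would exhibit explicit data. For (1), choose $f_1,f_2,g_1,g_2\in C_0^\infty(\R^3)$ so that, with the induced $f_3=\pa_1 f_2-\pa_2 f_1$, $g_3=\pa_1 g_2-\pa_2 g_1$, there is a positive-measure set $E\subset \R\times S^2$ on which $\omega_1{\mathcal F}_0[f_3,g_3]\le -c<0$ and $|\pa_\sigma{\mathcal F}_0[f_1,g_1]|\ge c$ (concrete non-radial bumps work by a direct computation of the Radon transform). Then \eqref{As01}--\eqref{As02} propagate these bounds to $\omega_1 W$ and $|\pa_\sigma V_1|$ for small $\ve$; the $(1,1)$-entry $t^{-\ve\omega_1 W/2}$ of $e^{(\ve\log t)\tens{A}[W]}$ is $\ge t^{\ve c/2}$ on $E$, and the integral dominates the $o(\ve)$ error, yielding $\|u(t)\|_E\ge C_1\ve(1+t)^{C_1\ve}$. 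For (2), pick $(f,g)\in\widetilde{\mathcal X}_3$ so that $(\pa_\sigma{\mathcal F}_0[f_3,g_3])(\pa_\sigma{\mathcal F}_0[f_2,g_2])\not\equiv 0$ in $L^2(\R\times S^2)$; the off-diagonal contribution then grows exactly like $\ve^2\log(1+t)$. For (3), as long as $(f,g)\not\equiv(0,0)$ some $U^{0}_j\not\equiv 0$, hence $\|\pa_\sigma V\|_{L^2}^2+\|\pa_\sigma W\|_{L^2}^2\ge c>0$ for $0<\ve\ll 1$; since the rotation is an isometry, the energy is bounded below by $C_1\ve$.

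The main obstacle is the non-asymptotic-freeness statement in (3). I would argue by contradiction using Lemma~\ref{EnergyPointwise}: assuming $v$ is asymptotically free in the energy sense, that lemma forces $e^{(\ve\log t)\tens{A}[W]}(\pa_\sigma V)\to T^+$ in $L^2(\R\times S^2;\R^2)$ for some $T^+$. By Lemma~\ref{RadConst} I can arrange $(f,g)\in\widetilde{\mathcal X}_3$ so that at some $\sigma_0\ne 0$ both $\pa_\sigma{\mathcal F}_0[f_3,g_3](\sigma_0,\omega)$ and the vector $\bigl(\pa_\sigma{\mathcal F}_0[f_1,g_1](\sigma_0,\omega),\pa_\sigma{\mathcal F}_0[f_2,g_2](\sigma_0,\omega)\bigr)$ are nonzero for every $\omega\in S^2$; by continuity and the smallness of the corrections in \eqref{As01}--\eqref{As02a}, there is a neighborhood $E$ of $\{\sigma_0\}\times S^2$ on which $|\pa_\sigma W|$ and $|\pa_\sigma V|$ are both uniformly bounded away from zero, uniformly for small $\ve$. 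Extracting a subsequence $t_n\to\infty$ along which $L^2$-convergence can be upgraded to a.e.\ convergence, for a.e.\ $(\sigma,\omega)\in E$ the rotation angle $\ve(\log t_n)\pa_\sigma W(\sigma,\omega)$ tends to $\pm\infty$, so the rotated vector $e^{(\ve\log t_n)\tens{A}[W](\sigma,\omega)}(\pa_\sigma V)(\sigma,\omega)$ traces out a circle of positive radius in $\R^2$ and cannot converge, contradicting the existence of the a.e.\ limit $T^+(\sigma,\omega)$.
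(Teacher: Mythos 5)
Your treatment of (1), (2) and the two-sided bound in (3) is essentially workable and close to the paper's route (the paper also reduces everything to Corollary~\ref{GenEnergyEst} plus Lemma~\ref{RadConst}, though for the boundedness in (3) it uses a cleaner argument: the non-null terms cancel in $\frac{d}{dt}\|u(t)\|_E^2$ by antisymmetry, so the energy is conserved up to $O(\ve^3)$, which gives the bound for \emph{all} $t\ge 0$ at once). Two secondary points you gloss over: the error in \eqref{GEE03} is $C\ve(1+t)^{3\lambda+C\ve-1/2}$, which is $O(\ve)$ but \emph{not} $o(\ve)$ uniformly in $t\ge 2$, so the profile term only dominates for $t\ge T$ with $T$ large but $\ve$-independent; you then need a separate argument on $[0,T]$ (the paper uses \eqref{Chato-t} to get $C_T^{-1}\ve\le\|u(t)\|_E\le C_T\ve$ there). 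Also in (2), with only ``$(\pa_\sigma W)(\pa_\sigma V_2)\not\equiv 0$'' you must rule out cancellation against $\pa_\sigma V_1$ in the intermediate regime $\ve\log t\sim 1$; the paper avoids this by choosing data so that $\pa_\sigma V_1,\pa_\sigma V_2,\pa_\sigma W\ge 1$ on a common set, making the two contributions have the same sign.

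The genuine gap is in the non-asymptotic-freeness argument for \eqref{RotEx}. After invoking Lemma~\ref{EnergyPointwise} you extract a subsequence $t_n\to\infty$ along which $e^{(\ve\log t_n)\tens{A}[W]}(\pa_\sigma V)\to T^+$ a.e., and claim a contradiction because the rotation angle $\ve(\log t_n)\pa_\sigma W(\sigma,\omega)$ diverges. But divergence of the angle does not prevent convergence of the rotated vector along that subsequence: the angles reduced mod $2\pi$ may perfectly well converge (indeed the a.e.\ convergent subsequence was extracted precisely so that convergence holds), so no contradiction follows from this pointwise picture. The step that is missing is a way to exploit convergence of the \emph{whole family} $t\to\infty$, not just one subsequence. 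The paper does this by comparing the times $t$ and $et$: convergence in $L^2$ forces $\|(e^{(\ve\log(et))\tens{A}[W]}-e^{(\ve\log t)\tens{A}[W]})(\pa_\sigma V)\|_{L^2(\R\times S^2)}\to 0$, while, because $e^{\tau\tens{A}[W](\sigma,\omega)}$ is a pointwise isometry of $\R^2$ (see \eqref{Example0302}), this norm is independent of $t$ and equals $\|(e^{\ve\tens{A}[W]}-\tens{I})(\pa_\sigma V)\|_{L^2}$; hence $(e^{\ve\tens{A}[W]}-\tens{I})(\pa_\sigma V)=0$ a.e., which contradicts the choice of data with $\pa_\sigma V\ne 0$ and $1\le\pa_\sigma W\le 2$ on a set of positive measure, since $e^{\ve\tens{A}[W]}-\tens{I}$ is invertible when $0<\ve\,\pa_\sigma W<2\pi$. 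Your proof needs this (or an equivalent two-time-scale) argument to close.
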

\begin{proof}
In what follows, $V$ and $W$ are the modified and the standard asymptotic profiles for the system 
under consideration in each assertion.
$V^{a}_{\sigma, *}$, $W^{a}_{\sigma, *}$ and $\tens{\Theta}_\ve^+$ 
are defined as in Corollary~\ref{GenEnergyEst}.

First we prove (1). We put $w=\pa_1u_2-\pa_2 u_1$. 
Then, as we have shown, $(u_1, u_2, w)^{\rm T}$ satisfies \eqref{simplestExR}
with \eqref{Data1} and \eqref{Data2}.
By \eqref{Chato-t}, we see that for any $(f,g)\not\equiv(0,0)$ and $T>0$, 
there exist two positive constants ${C}_T$ and $\widetilde{\ve}_T$, depending on $f$, $g$, and $T$, such that
\begin{equation}
{C}_T^{-1}\ve(1+T)\le \|u(t)\|_E\le {C}_T\ve,\quad 0\le t\le T,\ 0<\ve\le \widetilde{\ve}_T.
\label{SmallTimeEst}
\end{equation}
We are going to prove the following: If we choose appropriate $(f,g)\in \widetilde{\mathcal X}_2$, then
there exist three positive constants $C_1'$, $C_2'$, and $\ve_2$ such that 
\begin{equation}
\label{EneAim01}
C_1' (1+t)^{C_1'\ve}\le 
  \left(
   \sum_{a=0}^3\left(
        \bigl\|
            e^{\tens{\Theta}_\ve^+(t)}V^{a}_{\sigma, *}(t)\bigr\|_{L^2}^2
         {}+\bigl\|
            W^{a}_{\sigma, *}(t)\bigr\|_{L^2}^2\right)
   \right)^{1/2}
\le C_2' (1+t)^{C_2'\ve}
\end{equation}
for $t\ge 2$ and $0< \ve \le \ve_2$.
Once \eqref{EneAim01} is established, we obtain the desired result from
\eqref{GEE03}, \eqref{GEE04}, and \eqref{SmallTimeEst}. 
Indeed, if we choose sufficiently large $T(>0)$, then \eqref{GEE03}, \eqref{GEE04}, and \eqref{EneAim01} yield the desired estimate
for $t\ge T$,
while \eqref{SmallTimeEst} implies the desired estimate for $0\le t\le T$, provided that $\ve$ is sufficiently small.

We start the proof of \eqref{EneAim01}.
We take
$$
{\mathcal O}=\{\omega=(\omega_1, \omega_2, \omega_3)\in S^2;\, \omega_1\le -1/4,\ \omega_2\le -1/4\}.
$$
Remembering \eqref{Example0101}, we get
\begin{align}
\label{EneBehav01}
\sum_{a=0}^3 \|e^{\tens{\Theta}_\ve^+(t)}V^{a}_{\sigma, *}\|_{L^2}^2 
\ge & 2 \int_{\omega \in {\mathcal O}}
\int_{-t}^\infty t^{-\ve \omega_1 W(\sigma,\omega)}
|\pa_\sigma V_1(\sigma, \omega)|^2 d\sigma dS_\omega
\end{align}
for $t\ge 2$.
Fix some $\sigma_0>0$.
From Lemma \ref{RadConst} we can find $(f_j, g_j)\in 
\widetilde{\mathcal X}_2$ ($j=1,2$)
such that
$$
\left(\pa_\sigma \mathcal{F}_0[f_1, g_1]\right)(\sigma_0, \omega)
= -\left(\pa_\sigma \mathcal{F}_0[f_2, g_2]\right)(\sigma_0,\omega)
=8, \quad \omega\in S^2.
$$
From the continuity of the radiation field, we can choose some $\delta>0$ such that
\begin{equation}
\label{Tabi-On}
\left(\pa_\sigma \mathcal{F}_0[f_1, g_1]\right)(\sigma, \omega),
 -\left(\pa_\sigma \mathcal{F}_0[f_2, g_2]\right)(\sigma,\omega)
\ge 4, \quad |\sigma-\sigma_0|<\delta,\ \omega\in S^2.
\end{equation}
Then \eqref{Data2} leads to 
\begin{align*}
\mathcal{F}_0[f_3, g_3](\sigma,\omega)=&\omega_1\left(\pa_\sigma \mathcal{F}_0[f_2, g_2]\right)(\sigma,\omega)
-\omega_2\left(\pa_\sigma \mathcal{F}_0[f_1, g_1]\right)(\sigma,\omega)
\ge 2
\end{align*}
for $|\sigma-\sigma_0|<\delta$ and $\omega\in {\mathcal O}$.
Now, with this choice of $(f,g)$, \eqref{As01} and \eqref{As02} imply
\begin{align}
\label{EneBehav02}
\pa_\sigma V_1(\sigma, \omega)\ge 2,\ W(\sigma, \omega)\ge 1, \quad |\sigma-\sigma_0|<\delta,\ \omega\in {\mathcal O}
\end{align}
for sufficiently small $\ve$. Since we may assume $\sigma_0-\delta>0> -t$,
by \eqref{EneBehav01} and \eqref{EneBehav02} we obtain
$$
\sum_{a=0}^3\|e^{\tens{\Theta}_\ve^+(t)}V^{a}_{\sigma, *}(t)\|_{L^2}^2\ge C (1+t)^{\ve/4}
$$ for $t\ge 2$,
and the first half of \eqref{EneAim01} is proved.

By \eqref{As01a} and \eqref{As02a},
we have $\|\pa_\sigma V\|_{L^2_{\sigma, \omega}}+\|\pa_\sigma W\|_{L^2_{\sigma, \omega}}\le C$  for small $\ve>0$,
where $L_{\sigma, \omega}^2$ denotes $L^2(\R\times S^2)$.
We also have 
$
t^{-\ve \omega_1 W/2}+|\omega_1^{-1}(t^{-\ve \omega_1 W/2}-1)|\le C(1+t)^{C\ve}
$
for $t\ge 2$.
Hence we get 
\begin{align}
\label{EneConcl01}
\left(\sum_{a=0}^3 \|e^{\tens{\Theta}_\ve^+(t)} V^{a}_{\sigma, *}(t)\|_{L^2}^2\right)^{1/2}
\le &  C(1+t)^{C\ve} \|\pa_\sigma V\|_{L^2_{\sigma, \omega}}
\le C (1+t)^{C\ve}, \\
\label{EneConcl02}
\left(\sum_{a=0}^3 \|W^{a}_{\sigma, *}(t)\|_{L^2}^2\right)^{1/2}\le & C \|\pa_\sigma W\|_{L^2_{\sigma, \omega}}\le C,
\end{align}
which imply the last half of \eqref{EneAim01}.

Next we prove (2). 
Since \eqref{EneConcl02} is valid also for this case, our task is to estimate
 $\sum_{a=0}^3\|e^{\tens{\Theta}_\ve^+(t)}V^{a}_{\sigma, *}(t)\|_{L^2}^2$.
Fix some $\sigma_0>0$. 
Similarly to \eqref{Tabi-On},
we can take some $(f,g)\in 
\widetilde{\mathcal X}_3$ and
some $\delta>0$ such that
$$
(\pa_\sigma {\mathcal F}_0[f_j, g_j])(\sigma, \omega)\ge 2,
\quad |\sigma-\sigma_0|<\delta,\ \omega \in S^2,\ 1\le j\le 3.
$$ 
From \eqref{As01} and \eqref{As02}, we get
$$
\pa_\sigma V_1(\sigma, \omega)\ge 1,\ \pa_\sigma V_2(\sigma, \omega)\ge 1,\ \pa_\sigma W(\sigma, \omega)\ge 1
$$
for $|\sigma-\sigma_0|<\delta$ and $\omega\in S^2$, provided that $\ve$ is small enough.
Since we may assume $\sigma_0-\delta>0$,
from \eqref{Example0201} we get
\begin{align*}
\sum_{a=0}^3\|e^{\tens{\Theta}_\ve^+(t)}V^{a}_{\sigma, *}\|_{L^2}^2
\ge 8\pi \int_{\sigma_0-\delta}^{\sigma_0+\delta} |1+\ve \log t|^2 d\sigma
\ge C(1+\ve \log(1+t) )^2
\end{align*}
for $t\ge 2$ and small $\ve$.
It follows from \eqref{As01a}, \eqref{As02a}, \eqref{Example0201}, 
and \eqref{Example0202} that
\begin{align*}
\sum_{a=0}^3\|e^{\tens{\Theta}_\ve^+(t)}V^{a}_{\sigma, *}\|_{L^2}^2
\le & C \left(\|\pa_\sigma V\|_{L^2_{\sigma, \omega}}+\bigl(\ve \log t\bigr)
\|\pa_\sigma W\|_{L^\infty(\R\times S^2)}\|\pa_\sigma V_2\|_{L^2_{\sigma, \omega}}
\right)^2\\
\le & C\bigl(1+\ve \log (1+t)\bigr)^2.
\end{align*}
Now, using Corollary~\ref{GenEnergyEst}, we can easily reach at the desired result
as before.

Finally we prove (3).
From \eqref{RotEx} we obtain
$$
\frac{d}{dt}(\|u(t)\|_E^2)=\sum_{j=1}^3 \int_{\R^3} F_j^0(\pa u) \pa_t u_j dx.
$$
It follows from Lemma~\ref{NullNull} (with $s=0$), \eqref{Fukkie},
and \eqref{Chato} that
\begin{align*}
\sum_{j=1}^3 \int_{\R^3} \left|F_j^0(\pa u) \pa_t u_j\right| dx
\le & 
C\left(\sup_{x\in \R^3}\jb{t+|x|}^{-1}|u(t,x)|_1\right)\|\pa u(t)\|_{L^2}^2
\\
\le & C\ve^3(1+t)^{3\lambda-2}.
\end{align*}
Hence we get
$$
\left|\|u(t)\|_E^2-\|u(0)\|_E^2\right|\le C\ve^3\int_0^\infty(1+\tau)^{3\lambda-2}d\tau\le C\ve^3,
$$
which leads to \eqref{EneGrow03} for small $\ve>0$, because we have
$$
 2\|u(0)\|_E^2=\ve^2(\|\nabla_x f\|_{L^2}^2+\|g\|_{L^2}^2).
$$

Now we are going to prove that $u$ is not asymptotically free in the energy sense
for some initial profile. 
We fix some $\sigma_0> 0$. Then, similarly to the above, we can take $(f,g)\in 
\widetilde{\mathcal X}_3$ and $\delta>0$ such that
\begin{equation}
\pa_\sigma V(\sigma, \omega)\ne 0,\ 1\le \pa_\sigma W(\sigma, \omega)
\le 2, \quad (\sigma,\omega)\in {\mathcal I}\times S^2
\label{NonAF01}
\end{equation}
for sufficiently small $\ve>0$,
where $\mathcal{I}=(\sigma_0-\delta, \sigma_0+\delta)$. 
Suppose that $u$ is asymptotically free in the energy sense. 
From Lemma~\ref{EnergyPointwise} we see that
$e^{(\ve \log t)\tens{A}[W]}(\pa_\sigma V)$
converges to some function in $L^2(\R\times S^2)$ 
as $t$ goes to $\infty$.
Therefore we get
\begin{equation}
\lim_{t\to \infty}
\|(e^{(\ve\log(et))\tens{A}[W]}-e^{(\ve\log t)\tens{A}[W]})(\pa_\sigma V)
\|_{L^2(\R\times S^2)}=0.
\label{NonAF02}
\end{equation}
On the other hand, we get
\begin{align}
& \bigl\|\bigr(e^{(\ve\log(et))\tens{A}[W]}-e^{(\ve\log t)\tens{A}[W]}\bigr)(\pa_\sigma V)\bigr\|_{L^2(\R\times S^2)}
\nonumber\\
&\qquad\qquad\qquad\qquad
 = \|(e^{\ve\tens{A}[W]}-\tens{I})(\pa_\sigma V)\|_{L^2(\R
\times S^2)}
\label{NonAF03}
\end{align}
for $t\ge 2$,
because we have $|e^{(\ve \log t)\tens{A}[W](\sigma,\omega)}Y|=|Y|$
for any $Y\in \R^{2}$
by \eqref{Example0302}. 
By \eqref{NonAF02} and \eqref{NonAF03} we conclude that
$(e^{\ve\tens{A}[W](\sigma,\omega)}-\tens{I})(\pa_\sigma V)(\sigma,\omega)=0$
for almost every $(\sigma,\omega)\in \R\times S^2$.
This contradicts \eqref{NonAF01} if $\ve(>0)$ is small enough, because $e^{\ve\tens{A}[W](\sigma,\omega)}-\tens{I}$ is invertible 
when $0<\ve (\pa_\sigma W)(\sigma,\omega)<2\pi$.
Hence $u$ is not asymptotically free in the energy sense for small $\ve$.
\end{proof}
%
%
%
\section{Proof of Theorems~\ref{Necessity} and \ref{Necessity02}}
\label{ProofNecessity}
\subsection{Proof of Theorem~$\ref{Necessity}$}
Before we proceed to the proof of Theorem~\ref{Necessity},
we recall some elementary facts from the linear algebra.

Let $m$ be a positive integer,
and $\tens{I}$ be the $m\times m$ identity matrix. 
Suppose that $\tens{B}$ is an $m\times m$ complex matrix.  
For a complex number $\mu$ and a positive integer $n$, we define
${\mathcal K}_\mu^n(\tens{B})= \ker (\tens{B}-\mu \tens{I})^n=\{y
\in \C^m; (\tens{B}-\mu \tens{I})^n 
y=0
\}$. 
We also define ${\mathcal K}_\mu(\tens{B})=\bigcup_{n=1}^\infty {\mathcal K}_\mu^n(\tens{B})$.
It is known that for any $\mu\in \C$, there exists a positive integer
$l (\le m)$ such that
${\mathcal K}_\mu^l (\tens{B})={\mathcal K}_\mu(\tens{B})$.
Let $\mu_1, \dots, \mu_J$ (with some $J\le m$) be all of the distinct eigenvalues of $\tens{B}$.
Then it is well known that we have
\begin{equation}
\label{Decomp}
 \C^m=\bigoplus_{j=1}^J {\mathcal K}_{\mu_j}(\tens{B}).
\end{equation}
For $y 
\in {\mathcal K}_{\mu_j}(\tens{B})$, we get
\begin{equation}
\label{ExpExp}
e^{\tau \tens{B}} y 
=e^{\tau \mu_j \tens{I}}e^{\tau (\tens{B}-\mu_j \tens{I})} y 
=e^{\mu_j \tau}\sum_{n=0}^{l_j-1} \frac{\tau^n}{n!} (\tens{B}-\mu_j \tens{I})^n y, 
\end{equation}
where $l_j$ is the smallest positive integer to satisfy ${\mathcal K}_{\mu_j}^{l_j}(\tens{B})
={\mathcal K}_{\mu_j}(\tens{B})$.
We define
$$
{\mathcal Z}(\tens{B})={\mathcal K}_0^1(\tens{B})\oplus 
\bigoplus_{1\le j\le J; \real \mu_j<0
}
{\mathcal K}_{\mu_j}(\tens{B}).
$$ 
Note that $y 
\in {\mathcal Z}(\tens{B})$ implies
\begin{equation}
\label{Eigen00}
\biggl(\tens{B}\prod_{1\le j\le J; \real \mu_j<0} (\tens{B}-\mu_j \tens{I})^{m}\biggr) y=0.
\end{equation}
\eqref{Decomp} and \eqref{ExpExp} lead to the following property:
\begin{lemma}\label{Eigen01}
$e^{\tau \tens{B}} y$ converges to some vector in $\C^m$ as 
$\tau\to\infty$, if and only if
$$
 y \in {\mathcal Z}(\tens{B}).
$$
\end{lemma}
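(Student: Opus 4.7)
The plan is to exploit the direct sum decomposition \eqref{Decomp} to reduce the convergence question to a single generalized eigenspace at a time, and then classify the behaviour by the eigenvalue.

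First I would write $y=\sum_{j=1}^J y_j$ with $y_j\in \mathcal{K}_{\mu_j}(\tens{B})$, the decomposition being unique by \eqref{Decomp}. Since each $\mathcal{K}_{\mu_j}(\tens{B})$ is $\tens{B}$-invariant (hence $e^{\tau\tens{B}}$-invariant) and closed, and the projections onto the summands of \eqref{Decomp} are continuous linear maps, $e^{\tau\tens{B}} y$ converges in $\C^m$ as $\tau\to\infty$ if and only if $e^{\tau\tens{B}} y_j$ converges in $\mathcal{K}_{\mu_j}(\tens{B})$ for every $j$.

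Next, for each $j$ I would invoke \eqref{ExpExp} to write $e^{\tau\tens{B}} y_j=e^{\mu_j\tau}P_j(\tau)$, where $P_j(\tau)=\sum_{n=0}^{l_j-1}\frac{\tau^n}{n!}(\tens{B}-\mu_j\tens{I})^n y_j$ is a polynomial in $\tau$ taking values in $\mathcal{K}_{\mu_j}(\tens{B})$. Four cases then arise. If $\real\mu_j>0$, the exponential forces divergence unless $P_j\equiv 0$, which in turn forces $y_j=0$. If $\real\mu_j<0$, the exponential decay overwhelms any polynomial factor, so $e^{\tau\tens{B}} y_j\to 0$ for every $y_j$. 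If $\mu_j=0$, then $e^{\tau\tens{B}}y_j=P_j(\tau)$ is itself a polynomial, which converges iff it is constant, equivalently $(\tens{B}-0\cdot\tens{I})y_j=\tens{B}y_j=0$, i.e.\ $y_j\in \mathcal{K}_0^1(\tens{B})$. Finally, if $\real\mu_j=0$ but $\mu_j\ne 0$, I need to argue that $e^{i\theta\tau}P_j(\tau)$, with $\theta=\ima\mu_j\ne 0$, can converge only when $P_j\equiv 0$, which again gives $y_j=0$. Collating the four cases delivers exactly the characterization $y\in\mathcal{Z}(\tens{B})$.

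The main obstacle will be the oscillatory case $\real\mu_j=0$, $\mu_j\ne 0$. A clean way to treat it is to test convergence along the two arithmetic progressions $\tau_n=2\pi n/\theta$ and $\tau_n'=(2n+1)\pi/\theta$: on the first one $e^{i\theta\tau_n}=1$ and on the second $e^{i\theta\tau_n'}=-1$, so if the limit existed it would equal both $\lim_{n\to\infty}P_j(\tau_n)$ and $-\lim_{n\to\infty}P_j(\tau_n')$, forcing both one-sided polynomial limits to exist and to be negatives of each other. Since a polynomial in $\tau$ has a limit at $+\infty$ only if it is constant, $P_j$ must be a constant $c$ satisfying $c=-c$, i.e.\ $c=0$, hence $y_j=0$. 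With this step in hand, the case analysis above completes the proof.
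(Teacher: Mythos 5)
Your argument is correct and is exactly the elaboration the paper intends: it cites \eqref{Decomp} and \eqref{ExpExp} as the ingredients and leaves the case analysis on $\real\mu_j$ (including the oscillatory case, which you handle cleanly via the two arithmetic progressions) implicit. Nothing further is needed.
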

The following two lemmas play important roles in the proof of Theorem~\ref{Necessity}.
\begin{lemma}\label{Eigen02}
Let $\tens{\Phi}_0$ be an $m\times m$ matrix.
If $\tens{\Phi}_0$
has an eigenvalue $\lambda$ whose real part is positive, 
then there exists a vector 
$z^0=z^0(\tens{\Phi}_0)\in \C^m$ such that the following holds:
Suppose that $\tens{\Phi}=\tens{\Phi}(\ve)$ be an  $m\times m$ matrix-valued function of 
$\ve\in [0,1]$, say;
if $\tens{\Phi}=\tens{\Phi}(\ve)$ is continuous at $\ve=0$,
and $\tens{\Phi}(0)=\tens{\Phi}_0$, then
there exist two positive constants $\ve_1$ and $\delta$
such that
\begin{equation}
\label{key03}
 {\mathcal Z}(\tens{\Phi}(\ve))\cap B_\delta (z^0) =\emptyset
\end{equation}
for any $\ve\in [0, \ve_1]$, where $B_\delta(z^0 
 )=\{ y 
\in \C^m; 
|y-z^0|\le \delta\}$.

Furthermore, if $\tens{\Phi}_0$ is a real matrix, 
then we can choose $z^0$ in the above from $\R^m$. 
\end{lemma}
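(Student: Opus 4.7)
The plan is to use the Riesz spectral projector onto the ``unstable'' part of the spectrum of $\tens{\Phi}_0$. By hypothesis, the set $\Sigma_+$ of eigenvalues of $\tens{\Phi}_0$ with strictly positive real part is nonempty. I would fix a finite system $\gamma$ of simple closed contours lying entirely in the open right half-plane $\{z\in\C;\,\real z>0\}$, encircling exactly the eigenvalues in $\Sigma_+$ and no others; when $\tens{\Phi}_0$ is real I choose $\gamma$ to be symmetric under complex conjugation. Define
$$
 P(\tens{\Phi}):=\frac{1}{2\pi i}\oint_\gamma (z\tens{I}-\tens{\Phi})^{-1}\,dz.
$$
This is well-defined and depends continuously on $\tens{\Phi}$ in an open neighborhood of $\tens{\Phi}_0$, since $\gamma$ avoids the spectrum for nearby $\tens{\Phi}$; in the real-symmetric case, $P(\tens{\Phi})$ additionally maps $\R^m$ into $\R^m$ whenever $\tens{\Phi}$ is real.

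Next I pick a nonzero eigenvector $v$ of $\tens{\Phi}_0$ for $\lambda\in \Sigma_+$ and set $z^0:=v$ in the general complex case. In the real case, if $\lambda\in\R$ I choose $v\in\R^m$ and set $z^0:=v$; if $\lambda\notin\R$, I set $z^0:=v+\bar v\in\R^m\setminus\{0\}$, noting that $\bar v$ is an eigenvector for $\bar\lambda$, which is also enclosed by $\gamma$. In every case, $z^0$ lies in the range of $P(\tens{\Phi}_0)$, so $P(\tens{\Phi}_0)z^0=z^0\neq 0$.

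The key observation is that $\mathcal{Z}(\tens{\Phi}(\ve))\subset \ker P(\tens{\Phi}(\ve))$ for all sufficiently small $\ve\ge 0$. Indeed, the roots of the characteristic polynomial depend continuously on its coefficients, so for small $\ve$ the eigenvalues of $\tens{\Phi}(\ve)$ inside $\gamma$ remain in the open right half-plane, while those originally outside $\gamma$ stay outside. Hence $P(\tens{\Phi}(\ve))$ is the projection onto the direct sum of generalized eigenspaces of $\tens{\Phi}(\ve)$ for eigenvalues with positive real part, along the sum of the remaining generalized eigenspaces (cf.~the decomposition \eqref{Decomp}). Since $\mathcal{Z}(\tens{\Phi}(\ve))$ is, by definition, built only from generalized eigenspaces for eigenvalues $\mu$ with $\real\mu\le 0$, none of these contribute to the range of $P(\tens{\Phi}(\ve))$, and the claimed inclusion follows.

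Combining these facts with continuity, I would conclude as follows. Since $\ve\mapsto P(\tens{\Phi}(\ve))$ is continuous at $\ve=0$ and $P(\tens{\Phi}_0)z^0=z^0$, there exist $\ve_1>0$ and $\delta>0$ such that $\|P(\tens{\Phi}(\ve))y\|\ge \tfrac{1}{2}\|z^0\|>0$ for every $\ve\in[0,\ve_1]$ and every $y\in B_\delta(z^0)$. But any $y\in\mathcal{Z}(\tens{\Phi}(\ve))$ satisfies $P(\tens{\Phi}(\ve))y=0$, so no such $y$ can lie in $B_\delta(z^0)$, which is exactly \eqref{key03}. The only real obstacle is the verification that the eigenvalues enclosed by $\gamma$ remain strictly in the right half-plane under small perturbations of $\tens{\Phi}_0$, and this is the standard continuity of the spectrum under continuous perturbations.
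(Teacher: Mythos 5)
Your argument is correct, but it follows a genuinely different route from the paper. The paper works with a continuously chosen labelling $\lambda_1(\ve),\dots,\lambda_m(\ve)$ of the eigenvalues, forms the annihilating polynomial $\tens{\Psi}(\ve)=\tens{\Phi}(\ve)\prod_{j>m'}(\tens{\Phi}(\ve)-\lambda_j(\ve)\tens{I})^m$, picks $z^0$ (an eigenvector for an unstable eigenvalue, or its real or imaginary part in the real case) with $\tens{\Psi}(0)z^0\ne 0$, and concludes from the continuity of $(\ve,y)\mapsto\tens{\Psi}(\ve)y$ together with the fact that ${\mathcal Z}(\tens{\Phi}(\ve))\subset\ker\tens{\Psi}(\ve)$ (this is \eqref{Eigen00}). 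You instead use the Riesz spectral projector $P(\tens{\Phi})$ over a fixed contour $\gamma$ in the open right half-plane, note $P(\tens{\Phi}_0)z^0=z^0\ne 0$, establish ${\mathcal Z}(\tens{\Phi}(\ve))\subset\ker P(\tens{\Phi}(\ve))$, and finish by continuity of $P$. What your approach buys is that you never need to select a continuous branch of eigenvalues: all you need is that $\gamma$ stays in the resolvent set of $\tens{\Phi}(\ve)$ for small $\ve$, and that any eigenvalue with non-positive real part has winding number zero with respect to $\gamma$ (automatic, since $\gamma$ lies in the simply connected right half-plane, so its interior does too) — in particular your sentence about eigenvalues ``originally outside staying outside'' is not actually needed, and the slight imprecision that $P(\tens{\Phi}(\ve))$ projects onto the eigenvalues \emph{inside $\gamma$} rather than onto \emph{all} unstable eigenvalues is harmless for the inclusion you use. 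The paper's argument is more elementary (no contour integrals, only polynomial functional calculus), at the cost of invoking continuous dependence of the individual eigenvalues at $\ve=0$; your real-case choice $z^0=v+\bar v$ is also a bit cleaner than the paper's case distinction between $\real z^1$ and $\ima z^1$, since linear independence of $v$ and $\bar v$ gives $z^0\ne0$ and $P(\tens{\Phi}_0)z^0=z^0$ directly.
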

\begin{proof}
We denote the eigenvalues of $\tens{\Phi}(\ve)$ by $\lambda_1(\ve), \dots, \lambda_m(\ve)$, 
with each eigenvalue being counted up to its algebraic multiplicity.
Without loss of generality, we may assume that each $\lambda_j(\ve)$ 
is continuous at $\ve=0$.
Note that $\lambda_1(0), \ldots, \lambda_m(0)$ are the eigenvalues of $\tens{\Phi}_0$.

If all the real parts of the eigenvalues of $\tens{\Phi}_0$ are positive,
then \eqref{key03} is trivial, since $\real \lambda_j(\ve)>0$ for $1\le j\le m$, provided that $\ve$ is small.
 
Suppose that the real part of some eigenvalue of
$\tens{\Phi}_0$ is non-positive. 
Then, without loss of generality, we may assume that
$\real \lambda_j(0)>0$ for $1\le j\le m'$ and $\real \lambda_j(0)\le 0$
for $m'+1\le j\le m$ with some positive integer $m'(<m)$.
We define
$$
\tens{\Psi}(\ve)=\tens{\Phi}(\ve) (\tens{\Phi}(\ve)-\lambda_{m'+1}(\ve)\tens{I})^m\cdots (\tens{\Phi}(\ve)-\lambda_{m}(\ve)\tens{I})^m,\quad \ve\ge 0.
$$
We are going to show that there exists ${z^0}\in \C^m$,
depending only on $\tens{\Phi}_0$,
such that
\begin{equation}
\label{key02}
\tens{\Psi}(0) z^0\ne 0.
\end{equation}
Let ${z}^1\in \C^m$
be an eigenvector of $\tens{\Phi}_0$ 
associated with the eigenvalue $\lambda_1(0)(>0)$. Then we have
$$
\tens{\Psi}(0) 
z^1=\lambda_1(0)
\bigl(\lambda_1(0)-\lambda_{m'+1}(0)\bigr)^m\cdots \bigl(\lambda_1(0)-\lambda_m(0)\bigr)^m 
z^1\ne 0,
$$
and we obtain \eqref{key02} with $z^0=z^1$. 
If $\tens{\Phi}_0$ is a real matrix, 
then the same is true for $\tens{\Psi}(0)$ (observe that if $\lambda$ is an eigenvalue of $\tens{\Phi}_0$, so is its complex conjugate $\overline{\lambda}$), 
and we have either $\tens{\Psi}(0)(\real {z}^1)\ne {0}$
or 
$\tens{\Psi}(0)(\ima {z}^1)\ne 0$.
Thus we can choose ${z}^0\in \R^m$ satisfying \eqref{key02}.

Suppose that $z^0$ satisfies \eqref{key02}. From the continuity of $\tens{\Psi}(\ve)$ at $\ve=0$,
the mapping $(\ve, y)\mapsto \tens{\Psi}(\ve)y$ is continuous at $(\ve, y)=(0, z^0)$.
Hence there exists $\ve_2>0$ and $\delta>0$ such that
$\tens{\Psi}(\ve)y\ne 0$ for any $y\in B_\delta(z^0)$ and $0\le \ve \le \ve_2$.
This leads to \eqref{key03}, because $y
\in {\mathcal Z}\bigl(\tens{\Phi}(\ve)\bigr)$ implies $\tens{\Psi}(\ve)y=0$ if
$\ve(>0)$ is sufficiently small (cf.~\eqref{Eigen00}; observe that $\real \lambda_j(\ve)\le 0$
implies $m'+1\le j\le m$ for small $\ve$).
\end{proof}
\begin{lemma}\label{Eigen03}
Let $\tens{\Phi}=\tens{\Phi}(\ve)$ be an
$m\times m$ real matrix-valued function,
and $\ve_1$ be a positive constant. 
Assume that $\tens{\Phi}=\tens{\Phi}(\ve)$ is continuous at $\ve=0$,
and that $\tens{\Phi}(0)\ne \tens{O}$,
where $\tens{O}$ denotes the zero matrix.

If all the real parts of the eigenvalues of $\tens{\Phi}(\ve)$ are zero for $0\le \ve \le \ve_1$, then for any 
${z}\in \R^m$
satisfying $\tens{\Phi}(0) z \ne 0$,
there exist two positive constant $\ve_2(\le \ve_1)$ and $\delta$ 
such that
$
{\mathcal Z}(\tens{\Phi}(\ve))\cap B_\delta (z) 
 =\emptyset
$
holds for any $\ve\in [0, \ve_2]$.
\end{lemma}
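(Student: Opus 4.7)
The plan is to reduce $\mathcal{Z}(\tens{\Phi}(\ve))$ to $\ker\tens{\Phi}(\ve)$ under the spectral hypothesis, and then to derive the desired disjointness from a straightforward continuity argument.

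First I would observe that since all eigenvalues of $\tens{\Phi}(\ve)$ have zero real part for every $\ve\in[0,\ve_1]$, the second summand in
$$
 \mathcal{Z}(\tens{\Phi}(\ve))=\mathcal{K}_0^1(\tens{\Phi}(\ve))\oplus\bigoplus_{1\le j\le J;\ \real\mu_j(\ve)<0}\mathcal{K}_{\mu_j(\ve)}(\tens{\Phi}(\ve))
$$
is empty, so that $\mathcal{Z}(\tens{\Phi}(\ve))$ collapses to $\mathcal{K}_0^1(\tens{\Phi}(\ve))=\ker\tens{\Phi}(\ve)$. Hence the claim $\mathcal{Z}(\tens{\Phi}(\ve))\cap B_\delta(z)=\emptyset$ is equivalent to
$$
 \tens{\Phi}(\ve)y\neq 0\quad \text{for every } \ve\in[0,\ve_2]\text{ and every }y\in B_\delta(z).
$$

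Next I would invoke the continuity of $\tens{\Phi}$ at $\ve=0$ to produce $\ve_2\in(0,\ve_1]$ with $\|\tens{\Phi}(\ve)-\tens{\Phi}(0)\|\le 1$ for $\ve\in[0,\ve_2]$, and then use the splitting
$$
 \tens{\Phi}(\ve)y-\tens{\Phi}(0)z=\bigl(\tens{\Phi}(\ve)-\tens{\Phi}(0)\bigr)z+\tens{\Phi}(\ve)(y-z)
$$
to estimate
$$
 |\tens{\Phi}(\ve)y-\tens{\Phi}(0)z|\le \|\tens{\Phi}(\ve)-\tens{\Phi}(0)\|\,|z|+\bigl(\|\tens{\Phi}(0)\|+1\bigr)|y-z|.
$$
Shrinking $\ve_2$ further and choosing $\delta>0$ small enough, each term on the right is at most $|\tens{\Phi}(0)z|/3$; this is possible precisely because $\tens{\Phi}(0)z\neq 0$. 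Then the reverse triangle inequality gives $|\tens{\Phi}(\ve)y|\ge |\tens{\Phi}(0)z|/3>0$ for all $(\ve,y)\in[0,\ve_2]\times B_\delta(z)$, completing the proof.

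There is no serious obstacle here: the only substantive point is the reduction in the first paragraph, which uses the hypothesis in an essential way; everything else is a routine perturbation estimate. The assumption $\tens{\Phi}(0)\neq\tens{O}$ is used only implicitly, in that the existence of a vector $z$ with $\tens{\Phi}(0)z\neq 0$ forces it.
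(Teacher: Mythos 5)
Your proof is correct and follows essentially the same route as the paper: the spectral hypothesis reduces ${\mathcal Z}(\tens{\Phi}(\ve))$ to ${\mathcal K}_0^1(\tens{\Phi}(\ve))=\ker\tens{\Phi}(\ve)$, and then the continuity of $(\ve,y)\mapsto\tens{\Phi}(\ve)y$ at $(0,z)$ (which you simply make explicit via a perturbation estimate) gives the disjointness from $B_\delta(z)$. The only cosmetic remark is that the second summand in the decomposition is the trivial subspace $\{0\}$ rather than ``empty,'' which does not affect the argument.
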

\begin{proof}
Suppose $\tens{\Phi}(0) z\ne 0$.
Then, from the continuity of
the mapping $(\ve, y)\mapsto \tens{\Phi}(\ve)y$ at $(\ve, y)=(0, z)$,
there exist two positive constant $\ve_2(\le \ve_1)$ and $\delta$ 
such that $\tens{\Phi}(\ve)y\ne 0$ for $0\le \ve\le \ve_2$ and $y\in B_\delta(z)$.
This completes the proof because ${\mathcal Z}(\tens{\Phi}(\ve))={\mathcal K}_0^1(\tens{\Phi}(\ve))$ from the assumption.
\end{proof}

Now we are in a position to prove Theorem~\ref{Necessity}.
{(AFP)} under the null condition follows immediately from Theorem \ref{PointwiseAsymptotics} by regarding $w=u$ and neglecting $v$.
Hence we only have to prove that (2) implies (1).

Suppose that Condition~\ref{OurCond} is satisfied with $D={\mathcal X}_N$, and {(AFP)} holds.
We define 
$\tens{B}(\omega, \xi, \eta)=\left(B_{jk}(\omega, \xi, \eta)\right)_{1\le j, k\le N'}$ by
$$
{B}_{jk}(\omega, \xi, \eta)=-\frac{1}{2}\sum_{a=0}^3 \omega_a
\sum_{l=1}^{N''} \left(c_{jk}^{a, l}(\omega) \xi_l+\sum_{b=0}^3 d_{jk}^{\, a, lb}(\omega) 
\omega_b \eta_l\right)
$$
for $\omega=(\omega_1, \omega_2, \omega_3)\in S^2$, and $\xi=(\xi_l)^{\rm T}_{1\le l\le N''}$, 
$\eta=(\eta_l)^{\rm T}_{1\le l\le N''}
\in \R^{N''}$, where $c_{jk}^{a, l}$ and $d_{jk}^{\, a, lb}$ are from \eqref{AssA}.
Observe that we have
$$
\tens{A}[\zeta](\sigma, \omega)=\tens{B}\bigl(\omega, \zeta(\sigma, \omega), (\pa_\sigma \zeta)(\sigma, \omega)\bigr),
$$
where $\tens{A}[\zeta](\sigma, \omega)$ is given by \eqref{Weiss}-\eqref{AssB}.

Suppose that there exists some $(\omega', \xi', \eta')\in S^2\times
\R^{N''}\times \R^{N''}$
such that 
$\tens{A}_0:=\tens{B}({\omega}', {\xi}', {\eta}')$ 
has an eigenvalue $\lambda$ with $\real \lambda\ne 0$.
We may assume $\real \lambda>0$, because unless so,
$-\lambda$ is an eigenvalue of $\tens{B}({\omega}', -{\xi}', -{\eta}')$ 
with $\real (-\lambda)>0$.
We put
$z'={z}^0(\tens{A}_0)$, where $z^0(\tens{A}_0)$ is from Lemma~\ref{Eigen02} with $m=N'$
and $\tens{\Phi}_0=\tens{A}_0$.
Note that we may assume $z'\in \R^{N'}$.  
We fix some $\sigma'\ne 0$.
Writing $\xi'=(\xi'_{k})^{\rm T}_{1\le k\le N''}$, $\eta'=(\eta'_{k})^{\rm T}_{1\le k\le N''}$,
and $z'=(z'_{j})^{\rm T}_{1\le j\le N'}$,
by Lemma~\ref{RadConst} we can find $(f, g)\in C^\infty_0(\R^3; \R^N)\times C^\infty_0(\R^3; \R^N)$
such that we have
\begin{align}
\label{CD01}
& (\pa_\sigma {\mathcal F}_0[f_j, g_j])(\sigma', \omega)= z'_{j},
\\
\label{CD02}
& {\mathcal F}_0[f_{N'+k}, g_{N'+k}](\sigma', \omega)= \xi'_{k}, 
\ (\pa_\sigma {\mathcal F}_0[f_{N'+k}, g_{N'+k}])(\sigma', \omega)=\eta'_{k}
\end{align}
for $1\le j\le N'$, $1\le k\le N''$, and $\omega\in S^2$.
Let $V$ and $W$ be the modified and the standard
asymptotic profiles corresponding to $(f,g)$ above. 
We write $V=V(\sigma, \omega; \ve)$ and $W=W(\sigma, \omega; \ve)$ to indicate
the dependence on the parameter $\ve$ explicitly.
We put $\tens{A}_\ve=\tens{A}[W(\cdot,\cdot;\ve)](\sigma', \omega')$ for $\ve>0$.
Then, by \eqref{As02}, we get
$$
\tens{A}_\ve=\tens{B}\bigl(\omega', W(\sigma', \omega'; \ve), (\pa_\sigma W)(\sigma', \omega'; \ve)\bigr)
\to \tens{B}(\omega', \xi', \eta')=\tens{A}_0
$$
as $\ve\to +0$. 
Hence, from Lemma~\ref{Eigen02}, there exist $\ve_1>0$ and $\delta>0$ such that
$$
{\mathcal Z}(\tens{A}_\ve)\cap B_\delta(z')=\emptyset
$$
holds for $\ve\in [0,\ve_1]$. 
Since \eqref{As01} and \eqref{CD01} imply $(\pa_\sigma V)(\sigma', \omega'; \ve)\in B_\delta(z')$ for sufficiently small $\ve(>0)$, we obtain
\begin{equation} 
\label{CD04}
(\pa_\sigma V)(\sigma', \omega'; \ve) \not\in {\mathcal Z}(\tens{A}_\ve)
\end{equation}
for sufficiently small $\ve(>0)$.
From {(AFP)} and \eqref{Ruh}, 
$\ve e^{\tau \tens{A}_\ve} (\pa_\sigma V)(\sigma', \omega'; \ve)$
must converge to some vector in $\R^{N'}$ as $\tau\to \infty$, provided 
that $\ve(>0)$ is sufficiently small. 
However this never occurs because of
\eqref{CD04} and Lemma~\ref{Eigen01}.  
Hence we conclude that all the real parts of the eigenvalues of
$\tens{B}(\omega, \xi, \eta)$ must vanish for any $(\omega, \xi, \eta)\in S^2\times \R^{N''}\times \R^{N''}$.

We fix arbitrary $(\omega', \xi', \eta')\in S^2\times \R^{N''}\times \R^{N''}$, and
put $\tens{A}_0=\tens{B}(\omega', \xi', \eta')$ as before.
Suppose that $\tens{A}_0\ne \tens{O}$. Then we can find a vector $z'\in \R^{N'}$
such that $\tens{A}_0z' \ne {0}$. 
Now, for this new choice of
$\omega'$, $\xi'$, $\eta'$, and $z'$,
we choose $(f,g)$ satisfying \eqref{CD01} and \eqref{CD02}.
Then, following the similar lines to the above, but using Lemma~\ref{Eigen03}
instead of Lemma~\ref{Eigen02}, we reach at \eqref{CD04} again, which is a contradiction.
Hence we conclude $\tens{A}_0=\tens{O}$. Because $(\omega', \xi', \eta')$ can be chosen arbitrarily, this means that 
\begin{equation}
\label{TabiZero}
\tens{B}(\omega, \xi, \eta)=\tens{O}, \quad (\omega, \xi, \eta)\in S^2\times \R^{N'}\times \R^{N'}.
\end{equation}
Therefore $\sum_{a=0}^3\omega_a c_{jk}^{a, l}(\omega)=\sum_{a,b=0}^3 \omega_a \omega_b d_{jk}^{\, a, lb}(\omega)=0$ for any $\omega\in S^2$. 
Remembering that we have $\pa_a=Z_a-\omega_a \pa_t$, and 
looking at $\eqref{AssA}$, we get
\begin{align*}
 G_j(\omega, u, \pa u)=& \sum_{a=0}^3
 \sum_{k=1}^{N'}\sum_{l=1}^{N''} 
  \left(c_{jk}^{a, l}(\omega) w_{l}+\sum_{b=0}^3d_{jk}^{\, a, lb}(\omega) \pa_b w_{l}\right) 
(Z_a v_k) \\
&{}-\sum_{a=0}^3\sum_{k=1}^{N'}\sum_{l=1}^{N''} 
  \sum_{b=0}^3d_{jk}^{\, a, lb}(\omega) (Z_b w_{l})(\omega_a \pa_t v_k)
\end{align*}
for a smooth solution $u=(v^{\rm T}, w^{\rm T})^{\rm T}$ 
to \eqref{OurSys}-\eqref{Data0} on $[0,T)\times \R^3$,
and we obtain 
$|G_j(\omega, u, \pa u)|\le C\left(|u|+|\pa u|\right)|Zu|$.
Now, from \eqref{Form02}, \eqref{Tama}, and Lemma~\ref{CommZG} we get 
\begin{equation}
|F\left(u(t,r\omega), \pa u(t, r\omega)\right)| 
\le C \jb{t+r}^{-1}\left(|u(t, r\omega)|_{1}+|\pa u(t, r\omega)|\right)|u(t, r\omega)|_1
\label{Tama02}
\end{equation}
for any $\omega\in S^2$ and $(t,r)\in [0, T)\times [0,\infty)$ satisfying $r \ge t/2\ge 1$.

We fix $\sigma_0\ne 0$.
Let $(X, Y)\in \R^N\times \R^N$ be given. By Lemma \ref{RadConst}, 
we can choose $(f, g)\in C^\infty_0(\R^3;\R^N)\times C^\infty_0(\R^3; \R^N)$
such that
\begin{equation}
\label{TabiData}
\bigl({\mathcal F}_0[f_j, g_j](\sigma_0, \omega), (\pa_\sigma {\mathcal F}_0[f_j, g_j])(\sigma_0, \omega)\bigr)
=(X_j, Y_j), \quad \omega \in S^2,\ 1\le j\le N.
\end{equation}
Let $u=(v^{\rm T}, w^{\rm T})^{\rm T}$ be the solution to \eqref{OurSys}-\eqref{Data0} with this choice of $(f,g)$ from now on.
We write $u=u(t,x;\ve)$ in order to indicate the dependence on $\ve$ explicitly.
Let $V$ and $W$ be the modified and the standard asymptotic profiles given by Theorem~\ref{PointwiseAsymptotics}. 
Note that, from \eqref{TabiZero} and \eqref{Ruh}, we have
\begin{equation}
\label{Ruh07}
\sum_{a=0}^3\left|r(\pa_a v)(t, r\omega)-\ve \omega_a (\pa_\sigma V)(r-t, \omega)\right|\le C\ve 
\jb{t+r}^{3\lambda+C\ve-1}
\end{equation} 
for $r\ge t/2\ge 1$ and $\omega \in S^2$.
We put $U=(V^{\rm T}, W^{\rm T})^{\rm T}$.
From \eqref{Form01}, \eqref{Ruh07}, \eqref{Piyoko}, and \eqref{Chada}, we get
\begin{align}
& 
\lim_{t\to \infty} \left.\left\{\ve^{-2}{r^2} F\bigl(u(t, r\omega; \ve), \pa u(t, r\omega; \ve)\bigr)\right\}\right|_{r=t+\sigma}
\nonumber\\
& \qquad\qquad 
=F^{\rm red}\bigl(\omega, U(\sigma, \omega;\ve), \pa_\sigma U(\sigma, \omega;\ve)\bigr)
\label{NullRad03}
\end{align}
for any fixed $(\sigma, \omega)\in \R\times S^2$.
On the other hand,
using \eqref{DE01} and \eqref{DE02} to evaluate the right-hand side of
\eqref{Tama02}, we obtain
\begin{equation}
\label{Tama-han}
\left|\left.\left\{\ve^{-2}{r^2} F\bigl(u(t,r\omega;\ve), \pa u(t, r\omega;\ve)\bigr)
\right\}\right|_{r=t+\sigma}\right|\le C\jb{2t+\sigma}^{2\lambda-1}\to 0,\ \, t\to\infty.
\end{equation}
It follows from \eqref{NullRad03} and \eqref{Tama-han} that
\begin{equation}
\label{NullRad05}
F^{\rm red}\bigl(\omega, U(\sigma_0, \omega; \ve), \pa_\sigma U(\sigma_0, \omega; \ve)\bigr)=0,
\quad \omega\in S^2.
\end{equation}
Taking the limit in \eqref{NullRad05} as $\ve\to +0$,
we obtain from \eqref{As01}, \eqref{As02}, and \eqref{TabiData}
that $F^{\rm red}(\omega, X, Y)=0$
for $\omega\in S^2$ and $1\le j\le N$
(note that $F^{\rm red}$ does not depend on $X_j$ with $1\le j\le N'$ because of \eqref{Form01}).
Since $(X,Y)\in \R^N\times \R^N$ can be chosen arbitrarily, the null condition 
\eqref{NC00} is satisfied.
This completes the proof.
\qed 
\subsection{Proof of Theorem~$\ref{Necessity02}$}
Before we start the proof of Theorem~\ref{Necessity02},
we investigate $e^{\tau \tens{B}}$ for a real matrix $\tens{B}$ whose rank is at most one.
Let $p=(p_1, \ldots, p_N)^{\rm T}$, ${q}=(q_1, \ldots, q_N)^{\rm T}\in \R^N$.
We put 
$$
\tens{B}= q p^{\rm T}
=\left(
   \begin{matrix} p_1 q_1 & \dots & p_N q_1\\
                     \vdots &  \ddots & \vdots\\
                     p_1 q_N & \dots & p_N q_N\\
   \end{matrix}
 \right).
$$
For any $y=(y_1, \dots, y_N)^{\rm T}\in \R^N$, we have
$\tens{B} y=q p^{\rm T} y=  \ip{p}{y} q$,
where $\ip{\, \cdot\, }{\, \cdot\, }$ denotes the inner product in $\R^N$.
Hence, for a positive integer $k$, we have
$$
\tens{B}^k y=\ip{p}{y}{\tens{B}}^{k-1} q=\ip{p}{y} \ip{p}{q}^{k-1} q,
\quad y\in \R^N,
$$
and we get
\begin{align}
\label{Nec02}
e^{\tau \tens{B}} y
=& \begin{cases}
y+\tau \ip{p}{y} q, & 
 \text{if $\ip{p}{q}=0$},\\
y+\ip{p}{q}^{-1}\ip{p}{y}\left(e^{\ip{p}{q}\tau}-1
                  \right) q,
& \text{if $\ip{p}{q}\ne 0$}
\end{cases}
\end{align}
for any $y\in \R^N$.
We need the following lemma for the proof of Theorem~\ref{Necessity02}:
\begin{lemma}\label{AiHatsushiba}
Let $\tens{C}$ be an $N\times N$ real matrix, and 
${b}\in \R^N$.
Suppose that there exist two vectors 
 ${y}$, ${y}'\in \R^N$
such that
$$
\ip{\tens{C}{y}}{{y}}\ne 0, \quad \ip{\tens{C}{y}'}{{b}}\ne 0.
$$
Then there exists $z\in \R^N$
such that we have
\begin{equation}
\ip{\tens{C} z}{z}\ne 0, \text{ and } \ip{\tens{C}z}{{b}}> 0.
\label{HanakoShinohara}
\end{equation}
\end{lemma}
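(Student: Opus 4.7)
My plan is to seek $z$ inside the two-dimensional family $\{\alpha y + \beta y'\}$, splitting on whether $\ip{\tens{C}y}{b}$ vanishes. I would first observe that if $\ip{\tens{C}y}{b}\ne 0$, then taking $z = \sigma y$ with $\sigma = \pm 1$ keeps $\ip{\tens{C}z}{z} = \ip{\tens{C}y}{y}$ non-zero irrespective of the sign, while $\ip{\tens{C}z}{b} = \sigma\ip{\tens{C}y}{b}$ flips sign; so the appropriate choice of $\sigma$ secures \eqref{HanakoShinohara} immediately.

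In the degenerate case $\ip{\tens{C}y}{b} = 0$, I would perturb by setting $z_t := y + t y'$ for $t \in \R$. The scalar map $t \mapsto \ip{\tens{C}z_t}{z_t}$ is a quadratic polynomial whose value at $t=0$ equals $\ip{\tens{C}y}{y} \ne 0$, so by continuity it remains non-zero on some open interval $(-t_0, t_0)$. On the other hand,
$$
\ip{\tens{C}z_t}{b} = \ip{\tens{C}y}{b} + t\ip{\tens{C}y'}{b} = t\ip{\tens{C}y'}{b},
$$
which is a non-zero linear function of $t$ by the second hypothesis. Picking any $t \in (-t_0, t_0) \setminus \{0\}$ with $t\ip{\tens{C}y'}{b} > 0$ and setting $z = z_t$ then delivers both requirements in \eqref{HanakoShinohara} simultaneously.

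I do not foresee a serious obstacle: the only subtlety is that the naive choice $z=y$ already handles the quadratic-form condition but may fail the sign condition, and the small perturbation in the direction of $y'$ is introduced precisely to break this degeneracy, while continuity of $t\mapsto\ip{\tens{C}z_t}{z_t}$ at $t=0$ prevents the quadratic form from vanishing for small $|t|$. The argument uses no structural property of $\tens{C}$ beyond the two hypothesized inequalities, and the two cases together exhaust all possibilities.
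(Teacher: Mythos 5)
Your proof is correct and follows essentially the same route as the paper: both perturb along $z=y+t y'$ with $t$ small and of an appropriate sign so that $\ip{\tens{C}z}{z}$ stays non-zero while $\ip{\tens{C}z}{b}=t\ip{\tens{C}y'}{b}$ (plus $\ip{\tens{C}y}{b}$ when non-zero) acquires the right sign. The only cosmetic difference is that the paper first reduces to making $\ip{\tens{C}z}{b}\ne 0$ (adjusting the sign via $\pm z$) and splits off the case $\ip{\tens{C}y'}{y'}\ne 0$ so the quadratic term vanishes exactly, whereas you handle the sign directly and use continuity of the quadratic polynomial $t\mapsto\ip{\tens{C}z_t}{z_t}$ near $t=0$, which is equally valid.
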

\begin{proof}
It suffices to prove the existence of $z$ satisfying
\eqref{HanakoShinohara}
with 
$\ip{\tens{C}z}{{b}}> 0$
being replaced by 
$\ip{\tens{C}z}{{b}}\ne 0$,
because 
either $z$ or $-z$
has the desired property, 
depending on the sign of 
$\ip{\tens{C}z}{{b}}$. 
The case where we have either 
$\ip{\tens{C}{y}}{{b}}\ne 0$ or $\ip{\tens{C}{y'}}{{y'}}\ne 0$
is triviality. 
Hence we suppose
$\ip{\tens{C}{y}}{{b}}=\ip{\tens{C}{y'}}{{y'}}=0$,
and we put $z={y}+\lambda {y}'$
for some $\lambda\ne 0$.
Then we have 
$\ip{\tens{C}z}{{b}}=\lambda\ip{\tens{C}{y'}}{{b}}\ne 0$. 
Since $\ip{\tens{C}z}{{z}}=\ip{\tens{C} {y}}{{y}}+\lambda \left(\ip{\tens{C}{y}}{{y'}}+\ip{\tens{C}y'}{y}\right)$,
we find that $\ip{\tens{C}z}{z}\ne 0$ if $|\lambda|$ is sufficiently small.
This completes the proof.
\end{proof}

Now we start the proof of Theorem~\ref{Necessity02}.
(AFP) and (AFE) under the null condition follow from
Theorem~\ref{PointwiseAsymptotics} and Corollary~\ref{EnergyAsymptotics},
respectively.
Hence it suffices to prove that
if (1) does not hold, then neither (2) nor (3) holds.

Suppose that Condition~\ref{AlinhacCond}
is fulfilled, but 
the null condition is not satisfied.
As we have seen in Section~\ref{Ex3},
we can apply Theorem \ref{GE} to the extended system \eqref{HolTheGarasshi},
and we can obtain a global solution $u=(u_j)_{1\le j\le N}$ to 
the original Cauchy problem \eqref{OurSys}-\eqref{Data0} with $F=F(\pa u)$
for sufficiently small $\ve(>0)$.

Since the null condition is not satisfied,
there exist some $\omega'\in S^2$ and $Y' 
\in \R^N$ 
such that $F^{\rm red}(\omega', Y')\ne 0$.
Then we may assume that $\beta(\omega)$ in the assumption \eqref{AA-1} is smooth
and satisfies $|\beta(\omega)|=1$ in some neighborhood ${\mathcal O}(\subset S^2)$ of $\omega'$. 
Indeed, there exists a neighborhood ${\mathcal O}$ of $\omega'$
such that we have $F^{\rm red}(\omega, Y')\ne 0$ for $\omega\in {\mathcal O}$.
We define $\widetilde{\beta}(\omega)=|F^{\rm red}(\omega, Y')|^{-1} F^{\rm red}(\omega, Y')$.
Then $\widetilde{\beta}(\omega)$ is a smooth
function of $\omega\in {\mathcal O}$,
and apparently satisfies $|\widetilde{\beta}(\omega)|=1$. Moreover, from \eqref{AA-1} 
we see that $\widetilde{\beta}(\omega)$ is proportional to $\beta(\omega)$,
and \eqref{AAB-2} as well as \eqref{AA-1}
(with modified $M(\omega, Y)$) remains valid if we replace 
$\beta(\omega)$ by $\widetilde{\beta}(\omega)$ for $\omega\in {\mathcal O}$.

Since $|\beta(\omega)|=1$ for $\omega\in{\mathcal O}$, it follows from \eqref{AA-1}, \eqref{AAB-1}, and \eqref{Yone}
that
$$
M(\omega,Y)=\bigl\langle \beta(\omega), F^{\rm red}(\omega, Y)\bigr\rangle=
\sum_{m=1}^N \sum_{k=1}^N \sum_{l=1}^{N_0} \beta_m(\omega)g_{ml, k}(\omega)Y_k h_l(\omega, Y)
$$
for $(\omega,Y)\in{\mathcal O}\times \R^N$. 
Thus, again by \eqref{AA-1}, we obtain
\begin{equation}
F_j^{\rm red}(\omega, Y)=\beta_j(\omega)
\sum_{m=1}^N \sum_{k=1}^{N} \sum_{l=1}^{N_0} 
 \beta_m(\omega)g_{ml, k}(\omega)Y_k h_l(\omega, Y)
\label{Lily01}
\end{equation}
for $(\omega,Y)\in {\mathcal O}\times \R^N$ and $1\le j\le N$.
We fix a neighborhood ${\mathcal O}'$ of $\omega'$,  which is strictly contained in ${\mathcal O}$,
and choose a smooth cut-off function $\chi=\chi(\omega)$ satisfying
$\chi\equiv 1$ on ${\mathcal O}'$ and $\chi\equiv 0$ on $S^2\setminus {\mathcal O}$.
From \eqref{AAB-1} and \eqref{Lily01}, we have
$$
F_j^{\rm red}(\omega, Y)=\sum_{l=1}^{N_0} \widetilde{g}_{jl}(\omega, Y)h_l(\omega, Y),
\quad 1\le j\le N,\ (\omega,Y)\in S^2\times \R^N,
$$
where
$$
\widetilde{g}_{jl}(\omega, Y)=\sum_{k=1}^N \widetilde{g}_{jl, k}(\omega) Y_k, \quad 1\le j\le N,\ 1\le l\le N_0
$$
with
\begin{equation}
\label{Kabie02}
\widetilde{g}_{jl, k}(\omega)=\left(1-\chi(\omega)\right)g_{jl, k}(\omega)+\chi(\omega)
\beta_j(\omega) \sum_{m=1}^N \beta_m(\omega) g_{ml, k}(\omega)
\end{equation}
for $1\le j, k \le N$ and $1\le l \le N_0$. Thus \eqref{AAB-1} holds true
if we replace $g_{jl, k}$ by $\widetilde{g}_{jl, k}$.
We consider the extended system \eqref{HolTheGarasshi}, 
and apply Theorem~\ref{PointwiseAsymptotics} to it
in the following way:
Let $G$ be defined by \eqref{Nisesshi01}-\eqref{Nisesshi02} with
$g_{jl, k}$ being replaced by $\widetilde{g}_{jl, k}$, and let
$\tens{A}[\zeta]=(A_{jk}[\zeta])_{1\le j,k\le 5N}$ be determined by \eqref{Weiss}-\eqref{AssB} (with $N'=5N$ and $N''=N_0$) 
correspondingly to this replaced $G$, where $\zeta=(\zeta_l)_{1\le l\le N_0}^{\rm T}$ is a smooth function of 
$(\sigma,\omega)\in \R\times S^2$. 
Then, for $1\le j\le N$ we get
$$
A_{jk}[\zeta](\sigma, \omega)=
\begin{cases}
\displaystyle
-\frac{1}{2}\sum_{l=1}^{N_0} \widetilde{g}_{jl, k}(\omega) \zeta_l(\sigma, \omega), & 1\le k\le N,\\
0, & N+1\le k\le 5N.
\end{cases}
$$
We define $\tens{B}(\omega, \eta)=\bigl(B_{jk}(\omega, \eta) \bigr)_{1\le j, k\le N}$ by
\begin{equation}
\label{Kabie}
\tens{B}(\omega, \eta)=
-\frac{1}{2}\beta(\omega) \eta^{\rm T} \sum_{m=1}^N
\beta_m(\omega)\tens{G}_m(\omega), 
\quad (\omega, \eta)\in {\mathcal O}\times \R^{N_0},
\end{equation}
where $\tens{G}_1,\ldots, \tens{G}_N$ are $N_0\times N$ matrix-valued
functions defined by
$$
\tens{G}_j(\omega)=\bigl(g_{jl, k}(\omega)\bigr)_{1\le l\le N_0, 1\le k \le N}, \quad 1\le j \le N.
$$
Then \eqref{Kabie02} yields
$$
A_{jk}[\zeta](\sigma, \omega)=B_{jk}\bigl(\omega, \zeta(\sigma, \omega)\bigr),\quad
1\le j, k\le N,\ (\sigma, \omega) \in \R\times {\mathcal O'}.
$$
Now we apply Theorem~\ref{PointwiseAsymptotics} to the extended 
system \eqref{HolTheGarasshi} of $u^*$ with $5N+N_0$ components,
and we look only at the first $N$ components corresponding to the original $u$, 
and the last $N_0$ components corresponding to $w$ which is defined by \eqref{Ofuku}. Then we see that
there exist $U=\bigl(U_j(\sigma, \omega)\bigr)_{1\le j\le N}^{\rm T}$ and
$W=\bigl(W_k(\sigma, \omega)\bigr)_{1\le k\le N_0}^{\rm T}$ such that
\begin{align}
& \sum_{a=0}^3 \left|r (\pa_a u)(t,r\omega)-
 \ve \omega_a e^{(\ve \log t) \tens{B}(\omega, W(r-t, \omega))}
(\pa_\sigma U)(r-t, \omega)\right|
\nonumber \\
& \qquad\qquad\qquad\qquad\qquad\qquad\quad\,    
\le C\ve \jb{t+r}^{3\lambda+C\ve-1}, \quad \omega\in {\mathcal O}',
\label{Ruh-A01}\\
\label{Piyoko-A01}
& \left|r w(t, r\omega)-\ve W(r-t, \omega)\right|\le C\ve \jb{t+r}^{2\lambda-1}\jb{t-r}^{1-\rho},
\quad \omega\in S^2
\end{align}
for 
$(t,r)$ satisfying $r\ge t/2\ge 1$,
where $\lambda$ and $\rho$ are positive constants as in Theorem~\ref{GE}.
Moreover, correspondingly to \eqref{As01} 
we also have
\begin{align}
\label{As11}
& \sum_{j=1}^N |\pa_\sigma U_j(\sigma, \omega)-\pa_\sigma {\mathcal F}_0[f_j, g_j](\sigma, \omega)|\le 
C\ve (1+|\sigma|)^{3\lambda+C\ve-1}.
\end{align}

Let $\tens{H}$ be a matrix-valued function 
given by 
$$
\tens{H}(\omega)=\left(\sum_{a=0}^3 h_{l, k a}\omega_a
\right)_{1\le l\le N_0, 1\le k \le N},
$$ 
where $h_{l, ka}$ is from \eqref{Mil}.
Then the condition \eqref{AAB-2} leads to
\begin{equation}
\label{Lily02}
\tens{H}(\omega)\beta(\omega)={0},\quad \omega\in S^2.
\end{equation}
We claim that
\begin{equation}
\label{ExpW01}
W(\sigma, \omega)=\tens{H}(\omega)(\pa_\sigma 
{U})(\sigma, \omega)
=\tens{H}(\omega) 
{\Upsilon}(\tau, \sigma, \omega)
\end{equation}
for $(\sigma, \omega)\in \R\times {\mathcal O}'$ and $\tau\in \R$,
where 
${\Upsilon}(\tau, \sigma, \omega)=e^{\tau \tens{B}(\omega, W(\sigma, \omega))}(\pa_\sigma {U})(\sigma, \omega)$.
From \eqref{Lily02} and \eqref{Kabie}, we get
$$
\tens{H}(\omega) 
\tens{B}(\omega, \eta)=0,\quad (\omega, \eta)\in {\mathcal O}' \times \R^{N_0},
$$
and we obtain 
$$
\tens{H}(\omega)e^{\tau 
\tens{B}(\omega, \eta)}=\tens{H}(\omega),
\quad (\tau, \omega, \eta)\in \R\times 
{\mathcal O}' \times \R^{N_0},
$$
which leads to
\begin{equation}
\label{ExpW04}
\tens{H}(\omega) 
{\Upsilon}(\tau, \sigma, \omega)=\tens{H}(\omega)(\pa_\sigma 
{U})(\sigma, \omega), \quad (\tau, \sigma, \omega)\in \R\times\R\times {\mathcal O}'.
\end{equation}
From \eqref{ExpW04}, \eqref{Ofuku}, and \eqref{Ruh-A01}, we get
\begin{align}
|r w(t,r\omega)-\ve \tens{H}(\omega)(\pa_\sigma 
{U})(r-t, \omega)|=&
|r w(t,r\omega)-\ve \tens{H}(\omega) 
{\Upsilon}(\ve\log t, r-t, \omega)| 
\nonumber\\
\le & C\ve \jb{t+r}^{3\lambda+C\ve-1}.
\label{ExpW02}
\end{align}
By \eqref{Piyoko-A01} and \eqref{ExpW02}, we get
$$
|W(\sigma, \omega)-\tens{H}(\omega) (\pa_\sigma 
{U})(\sigma,\omega)|
\le \lim_{t\to\infty} C\jb{2t+\sigma}^{3\lambda+C\ve-1}=0, \quad (\sigma, \omega)\in \R\times {\mathcal O}',
$$
which, together with \eqref{ExpW04}, shows \eqref{ExpW01}.

We put
$$
\tens{C}(\omega) =
-\frac{1}{2} \sum_{m=1}^N {\beta_m(\omega)}
\tens{G}_m(\omega)^{\rm T} \tens{H}(\omega),
\quad \omega\in {\mathcal O}',
$$
so that we have 
$$
\tens{B}(\omega, \tens{H}(\omega) Y)=\beta(\omega)\bigl(\tens{C}(\omega)Y\bigr)^{\rm T},\quad \omega\in{\mathcal O}'.
$$ 
Observing that \eqref{Mil} can be written as $\bigl(h_l(\omega,Y)\bigr)_{1\le l\le N_0}^{\rm T}=\tens{H}(\omega)Y$,
we obtain from \eqref{Lily01} and \eqref{Kabie} 
that
$$
F^{\rm red}(\omega, Y)=-2\tens{B}(\omega, \tens{H}(\omega)Y)Y
=-2\ip{\tens{C}(\omega)Y}{Y}\beta(\omega),\quad (\omega, Y)\in{\mathcal O}'\times \R^N.
$$
Hence, recalling $F^{\rm red}(\omega', Y')\ne 0$, and
taking smaller ${\mathcal O}'$ if necessary, 
we may assume that
\begin{equation}
\label{ayumu035}
\bigl\langle{\tens{C}(\omega)Y'},{Y'}\bigr\rangle\ne 0 \text{ for any $\omega\in {\mathcal O}'$}.
\end{equation}

First we assume that there exists some $(\omega^*, Y^*)\in {\mathcal O}' \times \R^N$ such that
$$
\ip{\tens{C}(\omega^*)Y^*}{\beta(\omega^*)}\ne 0.
$$
Then in combination with \eqref{ayumu035} for $\omega=\omega^*$, Lemma~\ref{AiHatsushiba} implies that
there exists $Y^0\in \R^N$ satisfying
\begin{equation}
\label{ayumu05}
\ip{\tens{C}(\omega^*) Y^0}{Y^0}
\ne 0 \text{ and } \ip{\tens{C}(\omega^*) Y^0}{\beta(\omega^*)}>0.
\end{equation}
We put
\begin{align*}
\lambda_0=\ip{\tens{C}(\omega^*)Y^0}{\beta(\omega^*)},\ \mu_0=\ip{\tens{C}(\omega^*)Y^0}{Y^0}.
\end{align*}
Note that \eqref{ayumu05} implies $\lambda_0>0$ and $\mu_0\ne 0$.
Now fix some $\sigma_0>0$.
By Lemma~\ref{RadConst}, we can choose 
some $C^\infty_0$-data such that we have
\begin{equation}
\label{ayumu06}
\bigl((\pa_\sigma \mathcal{F}_0[f_j, g_j])(\sigma_0, \omega) \bigr)_{1\le j\le N}^{\rm T}
=Y^{0}, \quad \omega\in S^2.
\end{equation}
Let $U=U(\sigma, \omega; \ve)$ be the modified asymptotic profile corresponding to this choice of data.
We set 
\begin{align*}
\tilde{\tens{\Theta}}_\ve(t, \sigma, \omega)= & (\ve \log t)\tens{B} \bigl(
             \omega, \tens{H} (\omega) (\pa_\sigma U) (\sigma, \omega; \ve) \bigr), \\
\lambda_\ve(\sigma, \omega)= &
      \ip{\tens{C}(\omega)(\pa_\sigma U)(\sigma, \omega; \ve)}{\beta(\omega)}, \\
\mu_\ve(\sigma, \omega) =& 
\langle \tens{C}(\omega)
(\pa_\sigma U)(\sigma, \omega;\ve), (\pa_\sigma U)(\sigma, \omega; \ve)\rangle.          
\end{align*}
Since $\tens{C}$ and $\beta$ are smooth in ${\mathcal O}'$,
in view of \eqref{As11} and \eqref{ayumu06} we get 
$$
\lim_{(\sigma, \omega,\ve)\to (\sigma_0,\omega^*,0)}\lambda_\ve(\sigma,\omega)=\lambda_0,
\quad \lim_{(\sigma, \omega, \ve)\to (\sigma_0,\omega^*,0)}\mu_\ve(\sigma,\omega)=\mu_0.
$$
Hence there exist an open set ${\mathcal O}''\subset {\mathcal O}'$
(with $\omega^*\in {\mathcal O}''$) and
some open interval ${\mathcal I}(\ni \sigma_0)$
such that we have
\begin{align}
&\lambda_0/2 \le \lambda_\ve(\sigma, \omega)\le 3\lambda_0/2, 
\qquad\,
(\sigma, \omega) \in {\mathcal I} \times {\mathcal O}'', \nonumber\\
\label{ayumu42}
&|\mu_\ve(\sigma, \omega)| \ge |\mu_0|/2>0,
\qquad\quad
(\sigma, \omega) \in {\mathcal I} \times {\mathcal O}''
\end{align}
for small and positive $\ve$.
We may assume $\sigma>0$ for $\sigma \in {\mathcal I}$.
Now \eqref{Nec02} (with $p=\tens{C}(\omega)(\pa_\sigma U)$, $q=\beta(\omega)$, and $y=\pa_\sigma U$) yields
\begin{align}
\left|e^{\tilde{\tens{\Theta}}_\ve(t, \sigma, \omega)} (\pa_\sigma U)(\sigma, \omega)\right|
=& \left|(\pa_\sigma U) (\sigma, \omega)+
\frac{\mu_\ve(\sigma, \omega)}{\lambda_\ve(\sigma, \omega)} (e^{\lambda_\ve(\sigma, \omega) \tau}-1) \beta(\omega)\right|
\nonumber\\
\ge & \frac{|\mu_0|}{3\lambda_0}(e^{\lambda_0\tau/2}-1)-C
\ge \frac{|\mu_0|}{6\lambda_0} t^{\lambda_0\ve/2}
\label{ayumu43}
\end{align}
for 
$(\sigma, \omega)\in {\mathcal I}\times {\mathcal O}''$ and
$t\ge t_\ve^*$, where we have put $\tau=\ve\log t$, and 
$t_\ve^*$ is some positive constant depending on $\ve$. 
In view of \eqref{Ruh-A01}, this shows that {(AFP)} does not hold.
We define 
$$
\widetilde{E}(t)^2:=
\int_{t/2}^\infty\left(\int_{S^2}|e^{\tilde{\tens{\Theta}}_\ve(t,r-t,\omega)}(\pa_\sigma U)(r-t,\omega)|^2dS_\omega\right)dr.
$$
By \eqref{ayumu43}, we obtain
\begin{align}
\label{ayumu48}
\widetilde{E}(t)
\ge \left(
         \frac{\mu_0^2}{36\lambda_0^2} \int_{\mathcal I} \int_{{\mathcal O}''} t^{\lambda_0\ve} dS_\omega 
d\sigma \right)^{1/2}\ge  C t^{\lambda_0 \ve/2} 
\end{align}
for $t\ge t_\ve^*$. 
From Corollary~\ref{GenEnergyEst} and \eqref{ayumu48}, we find that
$\|u(t)\|_E\ge C\ve (1+t)^{C\ve}$ for $t\ge t_\ve^{**}$ with some $t_\ve^{**}>0$, 
and {(AFE)} never holds. 

Next we assume that 
$$
\ip{\tens{C}(\omega)Y}{\beta(\omega)}=0
$$
holds for all $(\omega, Y)\in {\mathcal O}'\times \R^N$.
Fix some $\omega^*\in {\mathcal O}'$, and some $\sigma_0>0$.
Because of \eqref{ayumu035}, if we put $Y^0=Y'$, we have $\ip{\tens{C}(\omega^*)Y^0}{Y^0}\ne 0$. 
Now we choose some data such that we have \eqref{ayumu06}.
As above, we can also choose some ${\mathcal O}'' (\subset {\mathcal O}')$ and
some interval ${\mathcal I}(\ni \sigma_0)$ such that \eqref{ayumu42} holds for small $\ve$.
Then, from \eqref{Nec02} we obtain
\begin{align}
\left|e^{\tilde{\tens{\Theta}}_\ve(t, \sigma, \omega)} (\pa_\sigma U)(\sigma, \omega; \ve)\right|
=& \left|(\pa_\sigma U)(\sigma, \omega;\ve)+(\ve \log t) \mu_\ve(\sigma, \omega) \beta(\omega)\right|
\nonumber\\
\ge &  \frac{|\mu_0|}{2} \ve \log t-C \ge \frac{|\mu_0|}{4}\ve \log t
\label{ayumu44}
\end{align}
for $(\sigma, \omega)\in {\mathcal I}\times {\mathcal O}'' $
and $t\ge t_\ve^*$ with some $t_\ve^*>0$.
From \eqref{ayumu44} we conclude that {(AFP)} fails to hold. 
Finally \eqref{ayumu44} leads to
$\widetilde{E}(t)\ge  C \ve \log t$
for $t\ge t_\ve^*$, and 
we find that (AFE) does not hold because we get 
$\|u(t)\|_E \ge C\ve^2 \log t$ for $t\ge t_\ve^{**}$ with some $t_\ve^{**}>0$.
This completes the proof. \qed

\section*{Acknowledgments}
The author would like to express 
his gratitude to Professor Yoshio Tsutsumi, 
because the present work is motivated by his question during the author's talk.
The author is grateful to Professor Serge Alinhac 
for his helpful suggestions, and to Professor Hideaki Sunagawa for fruitful conversations. 

This research is partially supported by Grant-in-Aid for Scientific Research (C)
(No.~20540211), Japan Society for the Promotion of Science.


\begin{flushleft}
{\sc Department of Mathematics, Wakayama University\\
930 Sakaedani, Wakayama 640-8510, Japan} \\
e-mail: katayama@center.wakayama-u.ac.jp
\end{flushleft}
\end{document}